\newcolumntype{C}[1]{>{\centering\arraybackslash}p{#1}}
\newcolumntype{P}[1]{>{\centering\arraybackslash}p{#1}}
\DeclareMathOperator{\codim}{codim}
\DeclareMathOperator{\NE}{NE}
\DeclareMathOperator{\Nef}{Nef}
\DeclareMathOperator{\Sing}{Sing}
\DeclareMathOperator{\Eff}{Eff}
\DeclareMathOperator{\Ext}{Ext}
\DeclareMathOperator{\Pic}{Pic}
\DeclareMathOperator{\Bl}{Bl}
\theoremstyle{plain}
\newtheorem{proposition}{Proposition}[section]
\newtheorem{theorem}[proposition]{Theorem}
\newtheorem{lemma}[proposition]{Lemma}
\newtheorem{corollary}[proposition]{Corollary}
\theoremstyle{definition}
\newtheorem{remark}[proposition]{Remark}
\newtheorem{example}[proposition]{Example}
\theoremstyle{plain}
\newcommand{\thistheoremname}{}
\newtheorem*{genericthm}{\thistheoremname}
\title{ \large\textbf{ON CASAGRANDE-DRUEL FANO VARIETIES\\ WITH LEFSCHETZ DEFECT 2}}
\author{\normalsize PIER ROBERTO PASTORINO}
\date{}
\begin{document}
	\maketitle
	\begin{abstract}
		The larger the Lefschetz defect $\delta_X$ of a smooth complex Fano variety $X$, the more information we can deduce  about the geometry of $X$. By \cite{Cas12}, if $\delta_X\ge 4$, then $X\simeq S\times T$ where $\dim(T)=\dim(X)-2$. The structure of $X$ when $\delta_X=3$ was described in \cite{CRS22}. In this paper, we study the case $\delta_X=2$. In particular, we focus on Fano varieties with $\delta_X=2$ arising from the construction introduced by C. Casagrande and S. Druel in \cite{CD15}, which we refer to as \textbf{Construction A}. We show that among the 19 families of Fano 3-folds with $\delta_X=2$ classified by Mori and Mukai, 15 arise from such construction. Moreover, we construct all Fano 4-folds with $\rho_X\ge 4$ and $\delta_X=2$ admitting such a structure, obtaining 147 distinct families in total. Combined with \cite[Theorem 1.1]{Sec23}, this yields a complete classification of all Casagrande-Druel Fano 4-folds with $\delta_X=2$. To broaden the scope, we also study a generalized version of Construction A, which we call \textbf{Construction B}, and we find that 18 out of the 19 families of Fano 3-folds with $\delta_X=2$ arise from it.   
	\end{abstract}
	\setcounter{tocdepth}{1}
	\tableofcontents
	\section{Introduction}
	Let $X$ be a smooth complex Fano variety. The \emph{Lefschetz defect} of $X$, denoted by $\delta_X$, is an invariant introduced in \cite{Cas12} that relates the Picard number of $X$ to that of its prime divisors. Given a prime divisor $D$ in $X$, let us consider the natural push-forward $\iota_{*}: N_1(D)\to N_1(X)$ given by the inclusion $\iota: D\hookrightarrow X$, and set 
	\[
	N_1(D,X):=\iota_*((N_1(D))\subseteq N_1(X).
	\] 
	The Lefschetz defect of $X$ is defined as
	\[
	\delta_X=\max\{\codim N_1(D,X) \;|\;D\subset X  \text{ is a prime divisor in } X \}.
	\] 
	It was recently proved \cite{Ver25} that the Lefschetz defect is invariant under smooth deformation.
	If we denote by $\rho_X$ the Picard number of $X$, it is clear that $\delta_X\in \{0,...,\rho_X-1\}$. Recent developements in the study of the Lefschetz defect have highlighted the fact that large values of $\delta$ carry strong information about the geometric structure of the variety.  For instance, we have the following key result about Fano varieties with $\delta\ge 4$.
	\begin{theorem}[\cite{Cas12}, Theorem 3.3]\label{TheoremDelta4} Let $X$ be a smooth Fano variety with Lefschetz defect $\delta_X\ge 4 $. Then $X\simeq S\times T$, where $S$ is a del Pezzo surface with $\rho_S=\delta_X+1$.
	\end{theorem}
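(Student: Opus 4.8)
The plan is to extract from a prime divisor realizing $\delta_X$ a fiber-type contraction whose general fiber is a del Pezzo surface, and then to exploit the size of $\delta_X$ to rigidify this fibration into a global product. Fix throughout a prime divisor $D\subset X$ with $\codim N_1(D,X)=\delta_X=:c\ge 4$, and write $V:=N_1(D,X)\subseteq N_1(X)$, a subspace of codimension $c$. Running extremal contractions, I would first reduce to a clean configuration of $\overline{\NE}(X)$ relative to $V$: if some extremal ray $R$ lies inside $V$, contracting it (possibly after flips) yields a Fano variety of smaller dimension or Picard number still carrying a divisor that realizes Lefschetz defect $c$, so by induction the product structure descends. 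The substantive case is when every extremal ray of $\overline{\NE}(X)$ meets $N_1(X)\setminus V$; there, because $c=\delta_X$ is \emph{maximal}, the numerical constraints on $D$ together with the theory of quasi-elementary contractions yield an elementary fiber-type contraction $\pi\colon X\to T$ whose general fiber $S$ is a smooth del Pezzo surface and for which $D$ dominates $T$.

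Next I would pin down the fiber. Since $D$ dominates $T$, $D\cap S$ is a divisor of $S$, and the composite $N_1(D\cap S,S)\to N_1(S)\to N_1(X)$ has image contained in $V$. Comparing Picard numbers through the (quasi-)elementary contraction and using the elementary fact that $\delta_S=\rho_S-1$ for a del Pezzo surface (every irreducible curve spans a line in $N_1(S)$), the maximality of $c$ forces $\delta_S=c$, hence $\rho_S=c+1=\delta_X+1$. In particular $\rho_S\ge 5$, so $S$ is $\mathbb{P}^2$ blown up at four or more points.

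It then remains to upgrade $\pi\colon X\to T$ from a del Pezzo fibration to a global product $X\cong S\times T$. The relative minimal model program of $X/T$ contracts exactly the extremal rays of $\overline{\NE}(X)$ lying in $V$, and these restrict on each fiber to the configuration of $(-1)$-curves of the del Pezzo surface; since $\rho_S\ge 5$ there are enough such relatively extremal divisorial contractions that their mutual compatibility over $T$ simultaneously trivializes the relevant normal bundles and identifies all fibers, so $\pi$ is isotrivial, then Zariski-locally trivial, and finally — using once more that $X$ is Fano, so that $T$ is Fano and the relevant extension obstructions vanish — globally trivial. Once $X\cong S\times T$, the decomposition $-K_X=(-K_S)\boxplus(-K_T)$ shows $T$ is Fano and $S$ is del Pezzo with $\rho_S=\delta_X+1$, which is the assertion.

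The main obstacle is the last step: passing from a del Pezzo fibration to an honest product, i.e.\ excluding any twisting — a relatively minimal $\mathbb{P}^1$-subbundle inside $S$ becoming nonsplit over $T$, or nontrivial monodromy on the $(-1)$-curves. This is exactly the point at which $\delta_X\ge 4$ (equivalently $\rho_S\ge 5$) is indispensable rather than merely $\delta_X\ge 2$ or $\delta_X\ge 3$; for $\delta_X=3$ the analogous fibration need not split, which is why that case requires the separate analysis of \cite{CRS22}.
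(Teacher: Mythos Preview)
The paper does not contain a proof of this statement: Theorem~\ref{TheoremDelta4} is quoted verbatim from \cite[Theorem~3.3]{Cas12} as a background result, with no argument given. So there is no ``paper's own proof'' to compare your proposal against.

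That said, your sketch follows the broad architecture of the original argument in \cite{Cas12}: produce from the divisor $D$ a fiber-type contraction to a lower-dimensional base, identify the fiber as a del Pezzo surface of the correct Picard number, and then prove the fibration is trivial. However, several steps are too loose to count as a proof. Your ``induction'' in the first paragraph is not well-posed: contracting an extremal ray inside $V$ need not preserve smoothness or the Fano condition, and ``the product structure descends'' presupposes what you are trying to prove. The passage from the existence of $D$ to an \emph{elementary} fiber-type contraction with del Pezzo fiber is the technical core of \cite{Cas12} and requires a careful analysis of how $\overline{\NE}(X)$ sits relative to $V^\perp$; invoking ``the theory of quasi-elementary contractions'' does not substitute for that analysis. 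Finally, your triviality argument (``enough $(-1)$-curves to trivialize normal bundles and identify fibers'') is where the real difficulty lies, as you yourself note; the actual proof handles this by a delicate study of the relative Mori cone and repeated use of the bound $c\ge 4$, not by a soft monodromy/obstruction argument. As written, the proposal is a plausible outline but not a proof.
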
   
	The case of a Fano variety $X$ with $\delta_X=3$ was studied in \cite{CRS22}, where it was shown that there exists a smooth Fano variety $T$ with $\dim T=\dim X-2$ such that $X$ is obtained from $T$ by means of two possible explicit constructions. In both cases, the construction consists of the composition of a $\mathbb{P}^2$-bundle $Z\to T$ with the blow-up of three pairwise disjoint smooth, irreducible, codimension 2 subvarieties in $Z$.  
	
	In this paper, we turn to the case $\delta=2$. We begin by analyzing the structure of Fano varieties with $\delta=2$ in arbitrary dimension from a theoretical perspective. We then specialize to the case of three and four-dimensional Fano varieties, where our results yield several new examples of Fano 4-folds with $\delta=2$.
	
	The main known result about the structure of Fano varieties with $\delta=2$ is the following.
	\begin{theorem}[\cite{Cas14}, Theorem 5.22]\label{C14Thm}
		Let $X$ be a smooth Fano variety with $\delta_X=2$, then one of the following holds.
		\begin{itemize}
			\item[(i)] There exists a sequence of flips $X\dashrightarrow
			X'$ and a conic bundle $\phi:X'\to Z$ where $X'$ and $Z$ are smooth, $\rho_X-\rho_Z=2$, and $\phi$ factors as $X'\stackrel{\sigma}{\to} Y\stackrel{\pi}{\to}Z$, where $\pi$ is a smooth $\mathbb{P}^1$-fibration and $\sigma$ is the blow-up of a smooth irreducible codimension 2 subvariety $A_Y$ in $Y$, such that $A_Y$ is a section over its image under $\pi$. 
			\item[(ii)] There is an equidimensional fibration in del Pezzo surfaces $\psi: X\to T $,  where $T$ has locally factorial, canonical singularities, $\codim \Sing(T)\ge 3$ and $\rho_X-\rho_T=3$. 
		\end{itemize}
	\end{theorem}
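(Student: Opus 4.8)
The plan is to extract from a prime divisor attaining the Lefschetz defect a Mori fibre space structure (after a sequence of flips), and then to read off the two alternatives from the dimension of its general fibre.

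\textbf{Step 1 (producing a fibration from the extremal divisor).} Fix a prime divisor $D\subset X$ with $\codim N_1(D,X)=2$, so that $N_1(D,X)\subsetneq N_1(X)$ has a $2$-dimensional complement. I would first invoke the structural results of \cite{Cas12} for prime divisors $D$ with $N_1(D,X)\subsetneq N_1(X)$: such a $D$ is rigid, it fails to be relatively nef for some contraction, and the local structure of $\NE(X)$ near $D$ provides either an extremal ray $R$ with $D\cdot R<0$ or a fibre-type contraction relative to which $D$ is numerically trivial. Running a relative MMP that flips only curves whose classes avoid $N_1(D,X)$ — so that the transform of $D$ retains codimension $2$ at each step — yields a birational map $X\dashrightarrow X'$, a composition of flips (so $\rho$ is unchanged and $X'$ remains of Fano type with only mild singularities), together with a fibre-type contraction $\varphi\colon X'\to W$ whose relative $1$-cycles account for the two missing dimensions of $N_1(D,X)$; the Picard-number drop $\rho_{X'}-\rho_W$ is then governed by $\codim N_1(D,X)=2$.

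\textbf{Step 2 (the dichotomy).} Let $F$ be a general fibre of $\varphi$ and $D'$ the transform of $D$. Since $-K_{X'}$ is $\varphi$-ample, $F$ is a Fano variety (smooth where $X'$ is), and comparing $N_1(D',X')$ with the vertical subspace of fibral classes, using the positivity of $D'$ restricted to fibres, one finds that the only dimensions of $F$ compatible with the Fano condition and the codimension-$2$ bound are $\dim F=1$ and $\dim F=2$. In the first case $D'$ is a horizontal divisor, generically finite over $W$; in the second, $D'$ is horizontal with $1$-dimensional fibres over $W$.

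\textbf{Step 3 (the two cases).} If $\dim F=1$, then $\varphi$ is a conic bundle; peeling off the finitely many divisorial degenerations shows it factors as $X'\xrightarrow{\sigma}Y\xrightarrow{\pi}Z$ with $\pi$ a smooth $\mathbb{P}^1$-fibration and $\sigma$ the blow-up of a smooth irreducible codimension-$2$ subvariety $A_Y$, and the constraint $\delta_X=2$, which pins down $\rho_X-\rho_Z=2$, forces $A_Y$ to be a section over its image — this is case (i). If $\dim F=2$, then $F$ is a del Pezzo surface and $\varphi$ is a del Pezzo fibration; one checks that no flips are needed here, so $\psi:=\varphi$ is already defined on $X$ and is equidimensional, the numerics give $\rho_X-\rho_T=3$, and the structure theory of bases of Mori fibre spaces issuing from smooth Fano varieties yields that $T$ is locally factorial with canonical singularities and, using smoothness of $X$, that $\codim\Sing T\ge 3$ — this is case (ii).

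\textbf{Main obstacle.} I expect the genuine work to lie in two places. First, singularity control: flips of a smooth variety need not remain smooth, so proving that $X'$, $Y$ and $Z$ are \emph{smooth} in case (i), and that $T$ has \emph{precisely} the stated singularities (and no worse) in case (ii), requires real input beyond formal MMP. Second, making Step 2 sharp: the crude comparison of $N_1(D',X')$ with the relative $1$-cycles of $\varphi$ only bounds the fibre dimension up to slack, and excluding $\dim F\ge 3$, identifying exactly which divisor realizes $D'$, and handling the degenerate subcases (e.g.\ $D'$ vertical, or $D'\cap F$ spanning less than expected) demand a careful analysis of how the extremal divisor meets each elementary contraction of $X'$ over $W$ — together with the bookkeeping that yields the precise drops $\rho_X-\rho_Z=2$ and $\rho_X-\rho_T=3$.
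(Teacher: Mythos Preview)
This theorem is not proved in the paper at all: it is quoted verbatim from \cite{Cas14} as a known background result (``The main known result about the structure of Fano varieties with $\delta=2$ is the following'') and serves only to motivate the constructions studied later. There is therefore no proof in the paper to compare your proposal against.

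That said, your sketch is in the right spirit but is too vague at the crucial points to be considered a proof. In Step~1 you ``run a relative MMP that flips only curves whose classes avoid $N_1(D,X)$'' and assert this preserves $\codim N_1(D',X')=2$; this is exactly the delicate part, and you have not explained why such a selective MMP exists, terminates, or lands on a Mori fibre space with the stated Picard drop. In Step~2 the argument that $\dim F\in\{1,2\}$ is asserted rather than derived, and the link between $\codim N_1(D,X)=2$ and the fibre dimension is not made precise. In Step~3 you ``peel off the finitely many divisorial degenerations'' of a conic bundle to obtain a smooth $\mathbb P^1$-fibration plus a single blow-up, but a conic bundle need not factor this way in general, and the claim that $A_Y$ is a section over its image requires a real argument tied to $\delta_X=2$. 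You correctly flag smoothness of $X'$, $Y$, $Z$ and the singularity control on $T$ as genuine obstacles; these are indeed the substance of the actual proof in \cite{Cas14}, and nothing in your outline addresses them.
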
  
	Examples suggest that $(i)$ could hold for every smooth Fano variety with $\delta=2$, and this is actually the case for smooth Fano 3-folds. In this paper we focus on smooth Fano varieties $X$ with $\delta=2$ satisfying a stronger version of $(i)$, namely the existence of a conic bundle $\phi\colon X\to Z$, with $\rho_X-\rho_Z=2$, such that $\phi$ factors through a smooth $\mathbb{P}^1$-fibration over $Z$, that would be $X=X'$ in $(i)$, without the need for any sequence of flips. Let $\pi\colon Y\to Z$ be the $\mathbb{P}^1$-fibration and $\sigma\colon X\to Y$ be the blow-up of $A_Y\subset Y$ such that $\phi=\sigma\circ\pi$. We study two particular instances of this setup. We assume that $A_Y$ lies in a section $S_Y$ of $\pi$; in particular this implies that $\pi$ is a $\mathbb{P}^1$-bundle\footnote{A morphism $\pi\colon Y\to Z$ will be called a ``$\mathbb{P}^1$-bundle over $Z$'' if $Y$ is the projectivization of a rank 2 vector bundle on $Z$ and $\pi$ is the natural projection.}. Under these assumptions, we say that $X$ arises from \emph{Construction B} (see Section \ref{SettingBCenter} for a more precise definition). If moreover there is another section of $\pi$ disjoint from $S_Y$, we say that $X$ arises from \emph{Construction A} (see Section \ref{SettingA} for a more precise definition). Although this setup might at first sight appear restrictive, we will see that it describes the structure of most families of Fano 3-folds with $\delta=2$, and yields several examples of families of Fano 4-folds with $\delta=2$. 
	
	We begin by studying Construction A. 
	Notice that Construction A coincides with the so called Casagrande-Druel construction (see \cite{CD15}), and seems a natural starting point for studying the structure of Fano varieties with $\delta=2$. Indeed, it is known that if $X$ is a Fano variety with $\dim(X)\ge 3$, $\delta_X=2$ and $\rho_X=3$ (the minimal possible value for $\rho$ when $\delta=2$), then $X$ necessarily arises from the Casagrande-Druel construction (\cite[Theorem 3.8]{CD15}). This result was also applied in \cite{Sec23} in order to classify all families of Fano 4-folds with $\rho=3$ and $\delta=2$.  Variations and special cases of such construction have recently appeared in \cite{CDFKM23} and \cite{Mal24}, with a focus on K-stability issues.
	
	Given a smooth variety $X$ arising from Construction A, our first goal is to identify criteria ensuring that $X$ is a Fano variety with $\delta_X=2$, expressed in terms of conditions on $Z$. Such criteria are provided by the combination of Proposition \ref{ConstructionA} (see also \cite[Proposition 2.3]{CDFKM23}) and Lemma \ref{deltaLemma}. 	 
	These results yield effective tools for studying Fano varieties with $\delta=2$ in an inductive way from lower to higher dimension.
	
	In contrast, the study of Construction B turns out to be more involved. Let $X$ be a smooth variety arising from Construction B. In the pursuit of conditions for $X$ to be Fano that can be tested on $Z$, we prove the following theorem: 
	\begin{theorem}(Theorem \ref{ConstructionB})\label{ConstructionBIntro}
		Let $Z$ be a smooth Fano variety, $\pi\colon  Y=\mathbb{P}(\mathcal{E})\to Z$ a $\mathbb{P}^1$-bundle and $S_Y\subset Y$ a section of $\pi$ associated to a short exact sequence of this form:
		\begin{align}\label{SectionSequence}
			0\to\mathcal{O}_Z\longrightarrow\mathcal{E}\longrightarrow\mathcal{O}_Z(D)\to 0
		\end{align}  
		Let $A$ be a smooth irreducible codimension 1 subvariety of $Z$ and $\sigma\colon  X\to Y$ the blow-up of $A_Y:=\pi^{-1}(A)\cap S_Y$ in $Y$.  		 
		Consider the following conditions 
		\begin{itemize}
			\item[(I)] $-K_Z+D-A$ is ample on $Z$.
			\item[(II)] $\mathcal{E}\otimes\mathcal{O}_Z(-K_Z-D)$ is  ample on $Z$.
			\item[(III)] $\mathcal{E}|_A\otimes \mathcal{O}_Z(-K_Z-D)|_{A}$ is ample on $A$.
		\end{itemize}
		
		Then:  
		\begin{align*}
			\begin{cases}
				\text{(I)}\\
				\text{(II)}
			\end{cases}
			\implies 
			\text{ } X \text{ is Fano }\implies 
			\begin{cases}
				\text{(I)}\\
				\text{(III)}.
			\end{cases}
		\end{align*}
	\end{theorem}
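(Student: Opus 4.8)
The plan is to compute $-K_X$ explicitly and then recognise (I), (II), (III) as positivity statements for $-K_X$ restricted to suitable subvarieties of $X$, together with one positivity statement for a class pulled back from $Y$.

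\emph{Step 1: the canonical bundle and a reformulation of (II).} Let $\xi=\mathcal{O}_{\mathbb{P}(\mathcal E)}(1)$ on $Y$. Since $\det\mathcal E=\mathcal O_Z(D)$ by \eqref{SectionSequence}, one has $-K_Y=2\xi+\pi^*(-K_Z-D)$, and the surjection $\mathcal E\twoheadrightarrow\mathcal O_Z(D)$ with kernel $\mathcal O_Z$ gives $S_Y\sim\xi$ and $N_{S_Y/Y}=\mathcal O_Z(D)$. As $A\subset Z$ is a smooth divisor, $A_Y$ is a smooth irreducible codimension-$2$ subvariety of $Y$; writing $E$ for the exceptional divisor of $\sigma$, $\tilde S,\tilde F$ for the strict transforms of $S_Y,\pi^{-1}(A)$, and $\phi=\pi\circ\sigma\colon X\to Z$ (a conic bundle with discriminant $A$), I would combine $K_X=\sigma^*K_Y+E$ with $S_Y\sim\xi$ and $\sigma^*S_Y=\tilde S+E$ to obtain
\begin{align*}
  -K_X=\sigma^*(-K_Y-S_Y)+\tilde S=2\tilde S+E+\phi^*(-K_Z-D).
\end{align*}
Finally, from $\mathcal O_{\mathbb P(\mathcal E)}(1)\otimes\pi^*L\cong\mathcal O_{\mathbb P(\mathcal E\otimes L)}(1)$ and the equivalence ``$\mathcal O_{\mathbb P(\mathcal F)}(1)$ ample $\iff\mathcal F$ ample'', condition (II) translates into: $-K_Y-S_Y=\xi+\pi^*(-K_Z-D)$ is ample on $Y$.

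\emph{Step 2: $X$ Fano $\Rightarrow$ (I) and (III).} Since $A_Y$ is a Cartier divisor of both $S_Y$ and $\pi^{-1}(A)$, the map $\sigma$ restricts to isomorphisms $\tilde S\xrightarrow{\sim}S_Y\cong Z$ and $\tilde F\xrightarrow{\sim}\pi^{-1}(A)=\mathbb P(\mathcal E|_A)$; and because $S_Y$ and $\pi^{-1}(A)$ are transverse along $A_Y$ (the fibre direction of $\pi$ lies in the tangent space of $\pi^{-1}(A)$ but not in that of the section $S_Y$), one gets $\tilde S\cap\tilde F=\emptyset$. Restricting the formula of Step 1, using $\tilde S|_{\tilde S}=\mathcal O_Z(D-A)$, $E|_{\tilde S}=A$, $\tilde S|_{\tilde F}=0$ and $E|_{\tilde F}=\xi|_{\pi^{-1}(A)}$, I would obtain
\begin{align*}
  -K_X|_{\tilde S}=-K_Z+D-A,\qquad -K_X|_{\tilde F}=\xi_A+\pi_A^*\big((-K_Z-D)|_A\big),
\end{align*}
where $\xi_A,\pi_A$ are the tautological class and the projection of $\mathbb P(\mathcal E|_A)\to A$. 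If $X$ is Fano these two restrictions are ample on $\tilde S$ and $\tilde F$ respectively; the first is exactly (I), while the second equals $c_1\big(\mathcal O_{\mathbb P(\mathcal E|_A\otimes\mathcal O_Z(-K_Z-D)|_A)}(1)\big)$ and is therefore ample if and only if (III) holds.

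\emph{Step 3: (I) and (II) $\Rightarrow$ $X$ Fano.} Granting (I) and (II), I would first check that $-K_X\cdot C>0$ for every irreducible curve $C\subset X$. If $C\subset\tilde S$, this is (I). If $\sigma$ contracts $C$, then $[C]$ is a positive multiple of the class $\ell$ of a fibre of $E\to A_Y$, and $-K_X\cdot\ell=-E\cdot\ell=1$. Otherwise $\sigma_*C\ne0$ and $C\not\subset\tilde S$, so by Step 1 and the projection formula $-K_X\cdot C=(-K_Y-S_Y)\cdot\sigma_*C+\tilde S\cdot C$, where the first term is positive by the reformulation of (II) and the second is non-negative. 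Hence $-K_X$ is nef; it is moreover big, being the sum of the pullback under the birational morphism $\sigma$ of the ample class $-K_Y-S_Y$ and the effective divisor $\tilde S$. A nef and big divisor meeting every curve positively is ample: by the base-point-free theorem $-K_X$ is semiample and defines a birational morphism $g\colon X\to W$ with $-K_X=g^*H$, $H$ ample on $W$; if $g$ were not an isomorphism it would contract a curve $C$, forcing $-K_X\cdot C=H\cdot g_*C=0$, a contradiction. Thus $-K_X$ is ample.

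\emph{On the main difficulty.} The computational heart is Step 1 together with the restriction identities of Step 2: one has to fix a convention for $\mathbb P(\mathcal E)$ and then keep consistent track of the class and the normal bundle of the section $S_Y$ attached to \eqref{SectionSequence}, of the discrepancy and the exceptional divisor of the codimension-$2$ blow-up $\sigma$, and of how $\tilde S$ and $\tilde F$ meet $E$ — in particular of the transversality that makes $\tilde S$ and $\tilde F$ disjoint. Once this is done, the reformulation of (II) and the short cone-theoretic argument of Step 3 close the proof; the only remaining subtlety is the passage from ``positive on every curve'' to ``ample'', for which the base-point-free theorem suffices (a posteriori $\overline{\NE}(X)$ is even polyhedral, since $X$ turns out to be Fano).
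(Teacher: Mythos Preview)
Your proof is correct and follows essentially the same strategy as the paper: express $-K_X$ in terms of $\sigma^*(S_Y+\pi^*(-K_Z-D))$ and $\tilde S$, read off (I) and (III) as ampleness of $-K_X$ restricted to $\tilde S$ and to the strict transform of $\pi^{-1}(A)$, and deduce Fano from (I)+(II) by showing $-K_X$ is strictly nef and big, then invoking the base-point-free theorem. Your case analysis for strict nefness is slightly more economical than the paper's (you absorb the ``curve in $E$ not contracted by $\sigma$'' case into the generic case via $\tilde S\cdot C\ge 0$ and $\sigma_*C\ne 0$, whereas the paper treats $C\subset E$ separately using $-K_X|_E$), and your bigness argument is marginally more direct (pullback of ample by a birational morphism plus effective, rather than first producing an ample divisor on $X$); but these are cosmetic differences, not a different route.
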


	When specialized to the setting of Construction A, conditions $(I)$ and $(II)$ in Theorem \ref{ConstructionBIntro} are both necessary and sufficient for $X$ to be Fano, and we recover Proposition \ref{ConstructionA}.
	However, examples show that, in general, conditions \emph{(I)}, \emph{(II)} are not necessary and conditions \emph{(I)}, \emph{(III)} are not sufficient for $X$ to be Fano. The difficulty in bridging the gap between necessary and sufficient conditions in Theorem \ref{ConstructionBIntro} is due to the \emph{asymmetric} nature of Construction B as opposed to the perfect symmetry of Construction A. To make this statement precise, we introduce \emph{Maruyama's elementary transformations} (\cite{Mar82}) and explain their relation to our constructions (see Section \ref{MaruyamaSection}).
	
	Regarding the determination of the Lefschetz defect in this setting, the same tools developed for Construction A remain effective and applicable (see Lemma \ref{deltaLemmaB}).

	All these results allow us to list all families of Fano 3-folds with $\delta=2$ that arise from Construction A and Construction B. It turns out that among the 19 families of Fano 3-folds with $\delta=2$ in the Iskovskikh-Mori-Mukai classification (see \cite{Isk77}, \cite{Isk78}, \cite{MM81} and \cite{MM03}), only 4 cannot be obtained via Construction A (see Section \ref{Standard3folds}), and only one family cannot be obtained via Construction B. Members of this latter family (\#4-1 in \cite[Table 6.1]{Ara+23}) can be described as the blow up of a $(2,2,2)$-curve in $\mathbb{P}^1\times\mathbb{P}^1\times\mathbb{P}^1$ that is the smooth complete intersection of two $(1,1,1)$-divisors. Thus they admit a similar conic bundle structure as the one described by Construction B, but in this case the blown up locus $A_Y\subset Y:=(\mathbb{P}^1)^3$ is a $(2,2,2)$-curve that does not lie in any section of the $\mathbb{P}^1$-bundle $\pi\colon Y\to Z:=\mathbb{P}^1\times\mathbb{P}^1$. 
	
	Turning to the four dimensional case, it is known that if $X$ is a Fano 4-fold with $\delta_X=2$, then $3\le\rho_X\le 6$ (see \cite[Theorem 6.1]{CS24}). In this paper, we list all families of Fano 4-folds with $\delta=2$ and $\rho\ge4$ that can be obtained through Construction A, while the case $\rho=3$ was settled in \cite{Sec23}. As a result of Theorem \ref{Rho6}, Theorem \ref{AllRho5} and Theorem \ref{AllRho4}, we obtain the following:
	\begin{theorem}
		There are 147 distinct families of smooth Fano 4-folds with $\delta=2$ and $\rho\ge 4$ arising from Construction A. Among them, 96 have $\rho=4$ (see Table \ref{FinalTableRho4}), 50 have $\rho=5$ (see Table \ref{FinalTableRho5}) and one of them has $\rho=6$ (the product $\Bl_2\mathbb{P}^2\times\Bl_2\mathbb{P}^2$). Moreover, 95 of those families are not toric nor products of lower dimensional varieties. Among them, 68 have $\rho=4$ and 27 have $\rho=5$.
	\end{theorem}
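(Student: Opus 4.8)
The plan is to establish the count via the three structural theorems cited in the statement (Theorem~\ref{Rho6}, Theorem~\ref{AllRho5}, Theorem~\ref{AllRho4}), each handling one value of $\rho_X$, and then to assemble the totals. The backbone of each of these theorems is the inductive machinery furnished by Proposition~\ref{ConstructionA} together with Lemma~\ref{deltaLemma}: a Fano $4$-fold $X$ arising from Construction~A is determined by the triple $(Z, D, A)$ where $Z$ is a smooth Fano $3$-fold, $D$ is a divisor class on $Z$, and $A \subset Z$ is a smooth irreducible surface, subject to the ampleness conditions $(I)$ and $(II)$ of Theorem~\ref{ConstructionBIntro} (which in the Construction~A setting become necessary and sufficient), and $\delta_X = 2$ is then read off from Lemma~\ref{deltaLemma} in terms of whether the relevant classes on $Z$ satisfy the corresponding codimension condition. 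So the first step is to observe that enumerating the families reduces to enumerating admissible triples $(Z,D,A)$ over the finite list of smooth Fano $3$-folds, up to the natural equivalence coming from isomorphisms of $Z$ and from the symmetry of Construction~A (which exchanges the two disjoint sections and hence acts on the data).

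The second step is the case analysis by $\rho_X$. Since $\rho_X = \rho_Z + 2$ for $X$ arising from Construction~A (the blow-up of a codimension-$2$ center in a $\mathbb{P}^1$-bundle adds $1$ to $\rho_Z$, and the $\mathbb{P}^1$-bundle adds another $1$), we have $\rho_Z = \rho_X - 2 \in \{2,3,4\}$. For $\rho_X = 6$ one needs $\rho_Z = 4$, and here one invokes the strong classification results for Fano $3$-folds with large Picard number (together with the constraint $\delta_X = 2$, which via Theorem~\ref{TheoremDelta4} and the $\delta = 3$ classification of \cite{CRS22} forces $Z$ and the data into a very rigid shape): the analysis collapses to the single product $\Bl_2\mathbb{P}^2 \times \Bl_2\mathbb{P}^2$. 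For $\rho_X = 5$ one runs through the Fano $3$-folds with $\rho_Z = 3$, and for $\rho_X = 4$ through those with $\rho_Z = 2$; in each case one determines for which $(D,A)$ the ampleness conditions hold and $\delta_X = 2$, producing the tables (Table~\ref{FinalTableRho5}, Table~\ref{FinalTableRho4}) with $50$ and $96$ entries respectively. Summing gives $1 + 50 + 96 = 147$.

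The third step is the refinement counting the families that are neither toric nor products. Being toric is detected on the data: $X$ is toric iff $Z$ is toric and $D$, $A$ are torus-invariant (so that all the bundles and blow-ups in Construction~A are torus-equivariant); being a nontrivial product is likewise visible from a splitting of the Construction~A data. One therefore goes back through the tables, flags the toric entries and the product entries, and subtracts, arriving at $95 = 68 + 27$. The bookkeeping here is purely a matter of inspecting each row of the two tables against these two criteria.

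\textbf{Main obstacle.} The genuinely hard part is not the assembly but the completeness of the enumeration inside Theorem~\ref{AllRho5} and Theorem~\ref{AllRho4}: one must be sure that \emph{every} admissible triple $(Z,D,A)$ has been found and that no two of them give isomorphic $X$. Completeness requires, for each base Fano $3$-fold $Z$, pinning down exactly which effective classes $D$ and which smooth surfaces $A$ satisfy $(I)$: $-K_Z + D - A$ ample, and $(II)$: $\mathcal{E} \otimes \mathcal{O}_Z(-K_Z - D)$ ample for the extension $\mathcal{E}$ defined by $D$ — a finite but delicate set of numerical checks on the Mori cone of $Z$, complicated by the fact that $\mathcal{E}$ need not split and its ampleness must be tested on every curve class. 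Avoiding double-counting requires understanding the automorphisms of $Z$ and the Construction~A symmetry well enough to identify when $(Z,D,A)$ and $(Z,D',A')$ yield the same $X$; this is where most of the care in the proof will go, and it is handled case by case in the proofs of the three cited theorems, which this theorem simply collects.
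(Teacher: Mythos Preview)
Your overall architecture is right: the theorem is a summary statement whose proof is just the concatenation of Theorems~\ref{Rho6}, \ref{AllRho5}, \ref{AllRho4}, and the paper treats it exactly that way. But two points in your proposal are off and would cause trouble if you tried to carry them out.

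First, you have mixed up the Fano conditions for Construction~A with those for Construction~B. In Construction~A the bundle is always the split bundle $\mathcal{E}=\mathcal{O}_Z\oplus\mathcal{O}_Z(D)$, so there is no ``extension $\mathcal{E}$ defined by $D$'' and no issue of $\mathcal{E}$ failing to split. The correct necessary and sufficient criterion (Proposition~\ref{ConstructionA}) is simply that $-K_Z-D$ and $-K_Z-(A-D)$ are both ample on $Z$; this is a purely numerical check on $\Nef(Z)$, not a vector-bundle ampleness test on curve classes. Your ``main obstacle'' paragraph is therefore describing a difficulty that does not arise.

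Second, and more substantively, you are missing the key reduction step that makes the enumeration feasible. The paper does \emph{not} run through all Fano $3$-folds with $\rho_Z=2$ or $\rho_Z=3$. Instead, Remark~\ref{RemarkStandard4folds} and Theorem~\ref{theorem4folds} use Remark~\ref{Degree1curve} (no anticanonical-degree-$1$ curves meet $A$ positively) together with the classification of extremal contractions of Fano $3$-folds to show that only a short list of $Z$ can occur: for $\rho_X=4$ the base $Z$ must be a $\mathbb{P}^1$-bundle over $\mathbb{P}^2$, a quadric or $\mathbb{P}^2$-bundle over $\mathbb{P}^1$, or $\Bl_p\mathcal{Q}_3$; for $\rho_X=5$ it must be a $\mathbb{P}^1$-bundle over a del Pezzo surface with $\rho=2$ or arise from Construction~A over $\mathbb{P}^2$; for $\rho_X=6$ it is forced to be $\Bl_2\mathbb{P}^2\times\mathbb{P}^1$. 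This is the step that turns an a~priori long list into a handful of cases, and without it your ``run through the Fano $3$-folds'' is not a proof strategy but a wish.

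Minor correction: in your toric criterion, only $A$ needs to be torus-invariant (Lemma~\ref{toricLemma}); any line bundle on a toric variety already gives a toric $\mathbb{P}^1$-bundle, so there is no separate condition on $D$.
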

	
	Among the 95 families of non-toric and non-product varieties, we searched for potential correspondences with some known databases of Fano 4-folds families in the literature. We found no matches in \cite{FTT24} or in \cite{BFMT25}. However, we did identify some families in \cite{CKP15} that share some numerical invariants (for example $c_1^4$ and $h^0(-K)$) as some families in our list. It is worth noticing that the Hodge numbers of Fano families in \cite{CKP15} are not generally known, which makes a precise comparison with our families challenging and therefore beyond the scope of this paper.   
	
	Future projects may include the study of Fano 4-folds with $\delta=2$ arising from Construction B. 
	
	In conclusion, Construction A accounts for the majority of smooth Fano 3-folds with $\delta=2$ and yields 147 families of smooth Fano 4-folds with $\delta=2$ and $\rho\ge 4$. The more general Construction B captures all but one of the 19 families of smooth Fano 3-folds with $\delta=2$, and may ultimately provide a more coherent and uniform description of these varieties than the traditional case-by-case classification by Iskovskikh-Mori-Mukai. 
	
	\paragraph{Structure of the paper.} In Sections 2 and 3 we present Constructions A and B respectively and study conditions for them to yield a Fano variety. In Section 4 we explore the relation of Constructions A and B with the setting of \emph{Maruyama's elementary transformation}. In Section 5 we study the iteration of two constructions of type A. In Sections 6 and 7 we list all smooth Fano 3-folds with $\delta=2$ arising from Constructions A and B respectively. In Section 8 we give some preliminary results for the study of Fano 4-folds arising from Construction A. In Sections 9, 10 and 11 we list all smooth Fano 4-folds with $\delta=2$ arising from Construction A with $\rho=6$, $\rho=5$ and $\rho=4$ respectively. Finally, in Section 12 we show the techniques we used to compute the invariants of the Fano 4-folds we constructed in the previous sections, and in Appendix A we give a complete list of all Fano 4-folds arising from Construction A with $\rho\in\{4,5\}$.   
	
	\paragraph{Notation and conventions.}
	We work over the field of complex numbers. Let $X$ be a smooth projective variety of arbitrary dimension. 
	
	$N_1(X)$ (respectively $N^1(X)$) is the real vector space of one-cycles (respectively, Cartier divisors) with real coefficients, modulo numerical equivalence, and $\dim N_1(X)=\dim N^1(X)=\rho_X$ is the Picard number of $X$. 
	We denote by $\Nef(X)$ the nef cone of $X$ and by $\Eff(X)$ the effective cone of $X$.
	
	The symbol $\sim$ stands for linear equivalence of divisors, while $\equiv$ stands for numerical equivalence of one-cycles, as well as divisors. 
	
	$\NE(X)\subset N_1(X)$ is the convex cone generated by the classes of effective curves. An \emph{extremal ray} $R$ is a one-dimensional face of $\NE(X)$. When $X$ is Fano, the \emph{length} of $R$ is \sloppy${\ell(R)=\min\{-K_X\cdot C\;|\;C\text{  is a rational curve in }R\}}$. 
	
	A \emph{contraction} is a surjective morphism $\varphi:X\to Y$ with connected fibers, where $Y$ is normal and projective. A contraction is \emph{elementary} if $\rho_X-\rho_Y=1$. 
	
	A \emph{conic bundle} is a contraction of fiber type $X\to Y$ where every fiber is one-dimensional and isomorphic to a plane conic.
	\\
	
	In $X=\mathbb{P}^1\times\mathbb{P}^1\times\mathbb{P}^1$, denote by $p_i\colon X\to \mathbb{P}^1$ the projection onto the $i$-th factor, and $H_i$ the class of $p_i^{*}\mathcal{O}_{\mathbb{P}^1}(1)$ in $N^1(X)$ for $i=1,2,3$. Let $f_1$, $f_2$, $f_3$ be the generators of $N_1(X)$ given by the classes of $\mathbb{P}^1\times\{*\}\times\{*\}$, $\{*\}\times\mathbb{P}^1\times\{*\}$, $\{*\}\times\{*\}\times\mathbb{P}^1$  respectively, so that $H_i\cdot f_j= \delta_{ij}$.
	By ``$(\alpha_1,\alpha_2,\alpha_3)$-divisor in $X$'' (with $\alpha_i\in\mathbb{Z}$) we mean a Cartier divisor $D$ in $X$ such that $[D]= \alpha_1H_1+\alpha_2H_2+\alpha_3H_3$. We also write $\mathcal{O}(D)=\mathcal{O}(\alpha_1,\alpha_2,\alpha_3)$. A curve $C$ in $X$ is called a ``$(\beta_1,\beta_2,\beta_3)$-curve'' if $[C]= \beta_1f_1+\beta_2f_2+\beta_3f_3$. 
	
	Let $W=\mathbb{P}^1\times\mathbb{P}^1$, and call $p_1$ and $p_2$ the projections of $W$ respectively onto the first and second factor. Let $\mathbf{l_i}$ be the numerical class of $p_i^{*}\mathcal{O}_{\mathbb{P}^1}(1)$ for $i=1,2$. Then $C$ is a ``$(a,b)$-curve in $W$'' if $C\equiv a\mathbf{l_1}+b\mathbf{l_2}$. We also write $\mathcal{O}(C)=\mathcal{O}(a,b)$. 
	\\
	
	Smooth Fano 3-folds have been classified in \cite{Isk77}, \cite{Isk78}, \cite{MM81}, \cite{MM03} into 105 families. Throughout this paper, those families will be labeled as \#1-1,\#1-2,\#1-3,...,\#9-1,\#10-1, following the classical notation (see, for example, Table 6.1 in \cite{Ara+23}).  
	
	In tables \ref{3-17} to \ref{FinalTableRho5} (see Sections \ref{Standard4foldsRho4}, \ref{Standard4foldsRho5} and Appendix A), we list all Fano 4-folds $X$ arising from Construction A with $\rho\in \{4,5\}$. Under the column ``Toric?'', when $X$ is toric we write the type of the Fano polytope of the family of $X$ described in \cite{Bat99}; under the column ``Product'', when $X$ is a product of two lower dimensional Fano varieties, we make those varieties explicit. When one of those varieties is a Fano 3-fold, we refer to it by using the classical notation.
	
	\paragraph{Acknowledgements.} I would like to thank my advisor, Cinzia Casagrande, for suggesting the problem and for her continuous guidance and support. I am also grateful to Saverio A. Secci for many helpful comments and suggestions.  
	
	\section{Construction A}
	\subsection{Setting}\label{SettingA}
	In this section we present Construction A. 
	Let $Z$ be a smooth projective variety with ${\dim(Z)=n-1\ge 2}$. Fix a Cartier divisor $D$ and a smooth irreducible hypersurface $A$ on $Z$. Let \sloppy ${Y:=\mathbb{P}_{Z}(\mathcal{O}_Z\oplus \mathcal{O}_Z(D))}$, with the projection $\pi\colon Y\to Z$. Then $\dim(Y)=n$ and $\rho_Y=\rho_Z+1$. 
	Consider $G_Y\subset Y$ and $\hat{G}_Y\subset Y$ the sections of $\pi$ corresponding respectively to the quotients ${\mathcal{O}\oplus\mathcal{O}(D)\twoheadrightarrow \mathcal{O}}$   
	and ${\mathcal{O}\oplus\mathcal{O}(D)\twoheadrightarrow \mathcal{O}(D)}$.   
	We have $G_Y\simeq Z\simeq \hat{G}_Y$, and it is not difficult to prove that $G_Y\cap \hat{G}_Y=\emptyset$, $\mathcal{N}_{G_Y/Y}\simeq \mathcal{O}(-D)$ and $\mathcal{N}_{\hat{G}_Y/Y}\simeq \mathcal{O}(D)$. 
	
	If $X$ is the blow up of $Y$ along $A_Y:=\pi^{-1}(A)\cap \hat{G}_Y$, we say that $X$ arises from \textbf{Construction A}. To summarize this construction (which only depends on the choice of $Z$, and the divisors $A,D\subset Z$), we will adopt the following notation: \[X=\mathscr{C}_A(Z;A,D).\] Observe that $X$ is a smooth projective variety with $\dim(X)=n$ and $\rho_X=\rho_Z+2$. Let $\sigma\colon X\to Y$ denote the blow-up map. Let $G$ and $\hat{G}$ be the strict transforms in $X$ of $G_Y$ and $\hat{G}_Y$ respectively. Then $\mathcal{N}_{G/X}\simeq \mathcal{O}(-D)$ and $\mathcal{N}_{\hat{G}/X}\simeq \mathcal{O}(D-A)$. 
	
	Call $D_1:=D$ and $D_2:=A-D$ so that $A\sim D_1+D_2$. 
	The composition $\phi:=\sigma\circ\pi$ is a conic bundle and has another factorization  $\phi=\hat{\sigma}\circ\hat{\pi}$ where $\hat{\sigma}: X\to \hat{Y}$ and $\hat{\pi}:\hat{Y}\to Z$ have the following properties:
	\begin{itemize}
		\item $\hat{Y}= \mathbb{P}_Z(\mathcal{O}\oplus\mathcal{O}(D_2))$ and $\hat{\pi}$ is the projection onto $Z$. 
		\item $\hat{\sigma}(G)$ and $\hat{\sigma}(\hat{G})$ are disjoint  sections of $\hat{\pi}$ in $\hat{Y}$. 
		\item $\mathcal{N}_{\hat{\sigma}(G)/\hat{Y}}\simeq \mathcal{O}(D_2)$ and $\mathcal{N}_{\hat{\sigma}(\hat{G})/\hat{Y}}\simeq \mathcal{O}(-D_2)$
		\item $\hat{\sigma}$ is the blow up of $\hat{Y}$ along $A_{\hat{Y}}:= \hat{\pi}^{-1}(A)\cap \hat{\sigma}(G)$.
		\item Let $E$ and $\hat{E}$ be the exceptional divisors respectively of $\sigma$ and $\hat{\sigma}$. Then $E$ and $\hat{E}$ coincide respectively with the strict transforms of $\hat{\pi}^{-1}(A)$ and $\pi^{-1}(A)$ in $X$.   		
	\end{itemize}
	The existence of such an alternative symmetric factorization is proved by M. Maruyama for a more general setting in \cite{Mar82}. The new variety $\hat{Y}$ is called  \emph{Maruyama's elementary transformation} of $Y$ and is birational to $Y$ through $\psi:=\hat{\sigma}\circ\sigma^{-1}$. 
	\begin{figure}[H]
		\centering
		\begin{tikzcd}
			&X \arrow[swap]{dl}{\sigma} \arrow{dr}{\hat{\sigma}} &  \\
			Y:=\mathbb{P}(\mathcal{O}\oplus\mathcal{O}(D_1))\arrow[swap]{dr}{\pi}\arrow[dotted]{rr}{\psi}&  & \hat{Y}:=\mathbb{P}(\mathcal{O}\oplus\mathcal{O}(D_2))\arrow{dl}{\hat{\pi}}\\
			&Z&
		\end{tikzcd}
		\caption{Alternative factorization of Construction A.}
		\label{figure:1}
	\end{figure}
	Using the notation we introduced earlier, we can write $\mathscr{C}_A(Z;A,D_1)\simeq\mathscr{C}_A(Z;A,D_2)$. 
	\begin{remark}\label{Intersection table}
		Let $e$ (resp.\ $\hat{e}$) be the class of a one-dimensional fiber of $\sigma$ (resp.\ of $\hat{\sigma}$) in $\NE(X)$. The classes $e$ and $\hat{e}$ generate the two-dimensional closed subcone $\NE(\phi)\subset\NE(X)$ of the curves contracted by $\phi$ and the following intersection table holds.
		\begin{table}[H]
			\begin{center}
				\begin{tabular}{|c|c|c|c|c|}
					\hline
					$\cdot$ & $E$ & $\hat{E}$ & $G$ & $\hat{G}$ \\
					\hline\hline
					$e$ & -1 & 1 &0 &1 \\
					\hline
					$\hat{e}$ & 1 & -1 &1&0\\
					\hline
				\end{tabular}
			\end{center}
			\caption{Intersection Table in $X$}
			\label{table:1}
		\end{table}
	\end{remark}
	\begin{remark}\label{Remark-K_X}
		From the known description of the anticanonical divisor of a blow-up and of a $\mathbb{P}^1$-bundle, we have 
		${-K_X\sim\sigma^{*}(-K_Y)-E}$ and ${-K_Y\sim\pi^{*}(-K_Z)+G_Y+\hat{G}_Y}$.
		Hence, since $\sigma^{*}G_Y\sim G$ and $\sigma^{*}(\hat{G}_Y)\sim\hat{G}+E$, we have
		\[-K_X\sim\sigma^{*}(-K_Y)-E\sim\phi^{*}(-K_Z)+\sigma^{*}G_Y+\sigma^{*}\hat{G}_Y-E\sim\phi^{*}(-K_Z)+G+\hat{G}.\]
	\end{remark}
	
	\subsection{Conditions for being Fano}
	Observe that the conic bundle $X\to Z$ given by Construction A has no nonreduced fibers, so that by \cite[Proposition 4.3]{Wis91} if $X$ is Fano, then $Z$ is Fano as well. We now give some conditions for $X$ to be Fano, that can be tested on $Z$, after assuming that $Z$ is Fano (see also \cite[Proposition 2.3]{CDFKM23}).
	\begin{proposition} \label{ConstructionA} 
		Let $Z$ be a smooth Fano variety, $A$ a smooth irreducible hypersurface in $Z$ and $D_1$, $D_2$ two Cartier divisors in $Z$ such that $A\sim D_1+D_2$. 
		
		Consider $X=\mathscr{C}_A(Z;A,D_1)(\simeq\mathscr{C}_A(Z;A,D_2))$. Then $X$ is Fano if and only if $-K_Z-D_i$ is ample for $i=1,2$.  
	\end{proposition}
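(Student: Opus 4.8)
The plan is to reduce the Fano condition on $X$ to the nefness/ampleness of $-K_X$ tested against the extremal rays of $\NE(X)$, and then translate this into positivity statements on $Z$ by using the two factorizations $\phi = \sigma\circ\pi = \hat\sigma\circ\hat\pi$. First I would use the formula from Remark \ref{Remark-K_X}, namely $-K_X\sim\phi^{*}(-K_Z)+G+\hat{G}$, and the symmetric intersection table in Table \ref{table:1}. Since $\phi\colon X\to Z$ is a conic bundle (a contraction of fiber type), the Mori cone $\NE(X)$ decomposes, via the relative cone theorem applied to $\phi$, into the two-dimensional subcone $\NE(\phi)=\mathbb{R}_{\ge 0}e+\mathbb{R}_{\ge 0}\hat e$ of curves contracted by $\phi$, together with curves mapping to curves in $Z$; more precisely, every curve class in $X$ can be written as a combination of $e$, $\hat e$ and the class of a $\phi$-horizontal curve lifting an effective curve on $Z$.

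The first, easy, step is to test $-K_X$ against $e$ and $\hat e$. Using Remark \ref{Remark-K_X} and Table \ref{table:1}, since $e$ and $\hat e$ are contracted by $\phi$ we get $-K_X\cdot e = (G+\hat G)\cdot e = 0+1 = 1 > 0$ and likewise $-K_X\cdot\hat e = 1 > 0$, so positivity on the fibral rays is automatic and imposes no condition. The content therefore lies in the $\phi$-horizontal directions. Here I would argue as follows: the two disjoint sections $\hat\sigma(G),\hat\sigma(\hat G)\subset\hat Y$ — equivalently, working with the factorization $\phi=\sigma\circ\pi$, the sections $G_Y$ (with $\mathcal{N}\simeq\mathcal{O}(-D_1)$) and $\hat G_Y$ — give, over any curve $C\subset Z$, minimal horizontal curves. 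Concretely, a curve in $X$ lying in $G\simeq Z$ over $C\subset Z$ has class pushing to $[C]$ in $N_1(Z)$, and by the adjunction/projection formula $-K_X\cdot (G$-lift of $C) = -K_Z\cdot C + \mathcal{N}_{G/X}\cdot C = (-K_Z - D_1)\cdot C$ (using $\mathcal{N}_{G/X}\simeq\mathcal{O}(-D_1)$ from Section \ref{SettingA}, and that $G\to Z$ is an isomorphism). Symmetrically, a curve in $\hat G$ over $C$ satisfies $-K_X\cdot(\hat G$-lift of $C) = (-K_Z - D_2)\cdot C$ because $\mathcal{N}_{\hat G/X}\simeq\mathcal{O}(D_1-A)=\mathcal{O}(-D_2)$.

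For the ``only if'' direction this already suffices: if $X$ is Fano then $-K_X$ is ample, hence positive on every effective curve; taking $C$ to be an arbitrary irreducible curve in $Z$ and lifting it into $G$ (resp. $\hat G$) shows $(-K_Z-D_1)\cdot C>0$ (resp. $(-K_Z-D_2)\cdot C>0$) for all such $C$, and since $Z$ is Fano (hence its Mori cone is polyhedral and generated by the rational curves just tested) Kleiman's criterion gives that $-K_Z-D_i$ is ample for $i=1,2$. For the ``if'' direction, assume $-K_Z-D_i$ is ample for $i=1,2$; I want to show $-K_X$ is positive on every extremal ray of $\NE(X)$. The fibral rays $e,\hat e$ are handled above. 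For a non-fibral extremal ray $R$, its image under $\phi$ is an extremal ray of $\NE(Z)$ (or at least an effective curve class) generated by some rational curve $C$; the preimage $\phi^{-1}(C)$ is a surface, and I would analyze the curves of minimal $-K_X$-degree in $\phi^{-1}(\mathbb{R}_{\ge0}[C])$. The key point is that $\phi^{-1}(C)$ is (the blow-up of) a $\mathbb{P}^1$-bundle over $C\cong\mathbb{P}^1$, i.e. a Hirzebruch surface possibly blown up at points over $A\cap C$, and on such a surface the minimal sections are precisely (the strict transforms of) $G$ and $\hat G$ restricted over $C$; thus the minimal-degree curve in $R$ is numerically a combination of $e$, $\hat e$ and one of these minimal sections, giving $-K_X\cdot R \ge \min\{(-K_Z-D_1)\cdot C,\ (-K_Z-D_2)\cdot C\} > 0$. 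Then Kleiman's criterion (or the cone theorem, since $X$ is smooth projective and we have checked $-K_X$ is positive on the polyhedral part and on fibral rays) yields that $-K_X$ is ample.

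\textbf{Main obstacle.} The delicate point is the last step: controlling the non-fibral extremal rays of $\NE(X)$ and ruling out the possibility that some extremal curve has $-K_X$-degree strictly smaller than the ``obvious'' bound coming from the sections $G$ and $\hat G$. The clean way to handle this is to exploit the conic-bundle structure $\phi$ directly — by relative Kleiman, $-K_X$ is $\phi$-ample (indeed $-K_X\cdot e = -K_X\cdot\hat e = 1$), so it is enough to check that $-K_X + \phi^{*}L$ is nef for a suitable ample $L$ on $Z$, and then optimize. Equivalently, writing $-K_X = \phi^{*}(-K_Z) + G + \hat G$, one shows that for an ample class $L$ on $Z$ the divisor $\phi^{*}(-K_Z - L) + G + \hat G$ is $\phi$-nef provided $-K_Z - D_i - (\text{small})$ stays ample, which is exactly conditions $(I)$–$(II)$ of Theorem \ref{ConstructionBIntro} specialized to $D_1 + D_2 \sim A$ (here $\mathcal{E}=\mathcal{O}\oplus\mathcal{O}(D_1)$ so $\mathcal{E}\otimes\mathcal{O}(-K_Z-D_1)=\mathcal{O}(-K_Z-D_1)\oplus\mathcal{O}(-K_Z-D_2)$, whose ampleness is condition $(II)$, while condition $(I)$ reads $-K_Z + D_1 - A = -K_Z - D_2$ ample). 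So in fact I would derive Proposition \ref{ConstructionA} as the special case of Theorem \ref{ConstructionBIntro} in which the blown-up center $A_Y = \pi^{-1}(A)\cap\hat G_Y$ is all of $\pi^{-1}(A)\cap\hat G_Y$ with $\hat G_Y$ admitting a disjoint companion section, making conditions $(I)$ and $(II)$ simultaneously necessary and sufficient; the symmetry $\mathscr{C}_A(Z;A,D_1)\simeq\mathscr{C}_A(Z;A,D_2)$ then shows $(I)$ for $D_2$ is $(I)$ for $D_1$ with roles swapped, and the pair of conditions collapses to ``$-K_Z - D_i$ ample for $i=1,2$''.
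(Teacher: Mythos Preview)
Your ``only if'' direction is correct and essentially identical to the paper's: restrict $-K_X$ to $G$ and $\hat G$, use adjunction with $\mathcal{N}_{G/X}\simeq\mathcal{O}(-D_1)$ and $\mathcal{N}_{\hat G/X}\simeq\mathcal{O}(-D_2)$.

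For the ``if'' direction the paper takes a simpler and more direct route than your extremal-ray analysis. The issue with your approach is that you are trying to control the extremal rays of $\NE(X)$ before knowing $X$ is Fano, so you have no a priori polyhedrality; and your claim that the minimal-degree horizontal curve over $C$ must lie in $G$ or $\hat G$ requires a nontrivial analysis of the blown-up Hirzebruch surface $\phi^{-1}(C)$ (which depends on how $C$ meets $A$). The paper sidesteps all of this by testing $-K_X$ against an \emph{arbitrary} irreducible curve $\Gamma$ and doing a trivial case split: if $\Gamma\subset G$ or $\Gamma\subset\hat G$, adjunction applies; if not, then $G\cdot\Gamma\ge 0$ and $\hat G\cdot\Gamma\ge 0$ by effectivity, so $-K_X\cdot\Gamma=\phi^*(-K_Z)\cdot\Gamma+G\cdot\Gamma+\hat G\cdot\Gamma\ge \phi^*(-K_Z)\cdot\Gamma$, which is strictly positive unless $\Gamma$ is $\phi$-vertical, and the vertical case is handled by $-K_X\cdot e=-K_X\cdot\hat e=1$. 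This gives strict nefness with no cone-structure input. The paper then proves bigness separately (writing $-mK_X$ as ample plus effective, using that the tautological divisor of $\mathbb{P}_Z(\mathcal{O}(-K_Z)\oplus\mathcal{O}(-K_Z-D_1))$ is ample) and concludes via the base-point-free theorem. Your proposal never addresses bigness, and strict nefness alone does not give ampleness.

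Your fallback of deducing Proposition~\ref{ConstructionA} from Theorem~\ref{ConstructionB} is logically sound for the sufficiency direction (and the paper does remark afterwards that the conditions specialize correctly), but in the paper the dependency runs the other way: Proposition~\ref{ConstructionA} is proved first, and the proof of Theorem~\ref{ConstructionB} reuses the same strict-nef-plus-big-plus-BPF template.
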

	\begin{proof}
		First assume $X$ is Fano. Then $-K_X|_{G}$ must be ample on $G$ and $-K_X|_{\hat{G}}$ must be ample on $\hat{G}$. Since $\mathcal{N}_{G/X}\simeq \mathcal{O}_Z(-D_1)$  and $\mathcal{N}_{\hat{G}/X}\simeq \mathcal{O}_Z(-D_2)$, by the adjunction formula we get that $-K_Z-D_1$ and  $-K_Z-D_2$  are ample on $Z$. 
		
		Conversely, assume that $-K_Z-D_1$ and $-K_Z-D_2$ are ample on $Z$. We show that $-K_X$ is ample on $X$.

		First we show that $-K_X$ is strictly nef, i.e.~it has positive intersection with every irreducible curve on $X$.  Observe that by the adjunction formula, $-K_X|_{G}$ is ample on $G$ because $-K_Z-D_1$ is ample on $Z$,  and $-K_X|_{\hat{G}}$ is ample on $\hat{G}$ because $-K_Z-D_2$ is ample on $Z$. Now, let $\Gamma$ be any irreducible curve in $X$. Since $-K_X$ is ample on $G$ and $\hat{G}$, if $\Gamma$ is contained in the support of one of those divisors, we have $-K_X\cdot\Gamma>0$ by Kleiman's criterion of ampleness. Suppose that $\Gamma$ is not contained in $G\cup\hat{G}$. Then we have 
		$\phi^{*}(-K_Z)\cdot \Gamma\ge0$, $G\cdot\Gamma\ge 0$ and $\hat{G}\cdot\Gamma\ge 0$,
		because $G$, $\hat{G}$ are effective and $\phi^{*}(-K_Z)$ is nef. 
		By Remark \ref{Remark-K_X}, it follows that 
		\begin{align*}
			-K_X\cdot \Gamma =\phi^{*}(-K_Z)\cdot \Gamma+G\cdot\Gamma+\hat{G}\cdot\Gamma\ge 0.
		\end{align*}
		Now, if $\Gamma$ is not contracted by $\phi$, then $\phi_{*}\Gamma$ is an irreducible curve in $Z$ and $\phi^{*}(-K_Z)\cdot\Gamma=-K_Z\cdot\phi_{*}\Gamma>0$ because $-K_Z$ is ample. This implies $-K_X\cdot \Gamma>0$. On the other hand, if $\Gamma$ is contracted by $\phi$, its class in $\NE(X)$ sits in the 2-dimensional subcone $\NE(\phi)=\mathbb{R}_{\ge0}e+\mathbb{R}_{\ge 0}\hat{e}$, which is $K$-negative because $-K_X\cdot e=1=-K_X\cdot\hat{e}$. This proves that $-K_X$ is strictly nef. 
		
		Let us now prove that $-K_X$ is big on $X$. 
		We know that $G_Y$ is the tautological divisor for ${\mathbb{P}_Z(\mathcal{O}\oplus\mathcal{O}(-D_1))}$. Hence $W_Y:=G_Y+\pi^{*}(-K_Z)$ is the tautological divisor for ${\mathbb{P}_Z(\mathcal{O}(-K_Z)\oplus\mathcal{O}(-K_Z-D_1))}$. Since $-K_Z-D_1$ and $-K_Z$ are ample on $Z$, $W_Y$ must be ample on $Y$. Therefore there exist $m,c\in\mathbb{N}$ such that 
		\begin{align*} 
			W_X:=m\sigma^{*}(W_Y)-cE=m\phi^{*}(-K_Z)+mG-cE  
		\end{align*}
		is ample on $X$. 
		Then 
		\begin{align*}
			-mK_X\sim m\phi^{*}(-K_Z)+mG+m\hat{G}
			=W_X+m\hat{G}+cE
		\end{align*}
		is the sum of an ample and an effective divisor, which proves that $-K_X$ is big. 
		
		In conclusion, since $-K_X$ is strictly nef and big, it is ample on $X$ by the base-point-free theorem.
	\end{proof}
	
	We now study the Lefschetz defect of $X$ in relation to the Lefschetz defect of $Z$.
	\begin{lemma} \label{deltaLemma}
		Suppose $X=\mathscr{C}_A(Z;A,D)$ is Fano. Then we have:
		\begin{itemize}
			\item[(i)] $\delta_X\ge 2$.
			\item[(ii)]  $\delta_X\ge \delta_Z$. Moreover, if $\codim(N_1(A,Z))=\delta_Z$, then $\delta_X\ge \delta_Z+1$.  
			\item[(iii)] If $\delta_X\ge 4$, then $\delta_X\in\{\delta_Z,\delta_Z+1\}$. 
		\end{itemize}
		In particular, if $\delta_Z\le 2$, then $2\le\delta_X \le 3$. 
	\end{lemma}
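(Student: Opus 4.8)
The plan is to analyze the Lefschetz defect of $X$ by exploiting the two natural ``large'' divisors $G$ and $\hat G$ in $X$, both isomorphic to $Z$, together with the exceptional divisor $E$ and its counterpart $\hat E$, which are $\mathbb P^1$-bundles (in fact, $\mathbb{P}^1\times A$-type objects) over $A$. For part (i), I would show $\delta_X\ge 2$ by producing a prime divisor $D_0\subset X$ with $\codim N_1(D_0,X)\ge 2$. The natural candidate is $G$ (or $\hat G$): since $G\simeq Z$ and $\mathcal N_{G/X}\simeq\mathcal O_Z(-D_1)$, the curves in $G$ push forward into $N_1(X)$, and I expect that the class $e$ of a fiber of $\sigma$ and the class $\hat e$ of a fiber of $\hat\sigma$ are both \emph{not} in $N_1(G,X)$; indeed from Table~\ref{table:1}, $e\cdot G=0$ while $\hat e\cdot G=1$, and $G$ contains no curve numerically equivalent to $e$ (a fiber of $\sigma$ meets $\hat G$, not $G$). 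More carefully: $N_1(G,X)$ lands in the hyperplane $\{E\cdot\gamma=\hat\sigma\text{-data}\}$; I would argue that $\phi_*$ restricted to $N_1(G,X)$ is an isomorphism onto $N_1(Z)$ (since $\phi|_G\colon G\to Z$ is an isomorphism) composed with the inclusion, so $N_1(G,X)\cap\NE(\phi)$ is at most one-dimensional, forcing $\codim N_1(G,X)\ge\rho_X-(\rho_Z+1)=1$; getting the second codimension requires checking that the class of a fiber of $\phi$ (a conic, $\equiv e+\hat e$) and, say, $\hat e$ are not simultaneously hit. This is where I'd be most careful.

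For part (ii), given a prime divisor $A'\subset Z$ realizing $\delta_Z=\codim N_1(A',Z)$, I would take its ``total transform into $G$'', i.e.\ the prime divisor $\phi|_G^{-1}(A')\subset G\subset X$; more simply I'd look at a prime divisor $B\subset Z$ with the maximal codimension property and pull it back through the bundle structure $\pi$ and strict-transform it in $X$. The key computation is that $N_1(B_X,X)$ differs from $N_1(B,Z)$ (viewed inside $N_1(X)$ via any section) only by possibly adding the fiber directions $e,\hat e$; so $\codim N_1(B_X,X)\ge\delta_Z+2-2=\delta_Z$ generically, and $\ge\delta_Z+1$ when the hypersurface can be chosen to also be ``thin'' in the $\phi$-fiber direction — precisely the hypothesis $\codim N_1(A,Z)=\delta_Z$, in which case the exceptional divisor $E$ (a bundle over $A$) gives the extra codimension: $N_1(E,X)$ surjects onto $N_1(A,Z)$ plus only $e$ and one combination, losing $\hat e$.

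For part (iii), this is the structural constraint coming from Theorem~\ref{TheoremDelta4}: if $\delta_X\ge4$, then $X\simeq S\times T$ with $S$ a del Pezzo surface of Picard number $\delta_X+1$. I would combine this with the conic bundle $\phi\colon X\to Z$: the product structure forces $\phi$ (or its Stein factorization) to be compatible with one of the factors, and since $\phi$ has relative Picard number $2$ and $Z$ has Picard number $\rho_X-2$, one gets $Z\simeq S\times T'$ or $Z\simeq S'\times T$ with the del Pezzo factor essentially fixed; chasing through, $\delta_Z$ is then $\delta_X$ or $\delta_X-1$ depending on whether the blow-up center and the bundle ``use up'' one unit of Picard number from the del Pezzo factor. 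Concretely I'd argue: $\delta_X\ge4$ plus part (ii) gives $\delta_Z\ge\delta_X-1\ge3$, so $Z$ too splits as $S_Z\times T_Z$; matching the conic bundle and the $\mathbb P^1$-bundle against these splittings pins down $\delta_X-\delta_Z\in\{0,1\}$. The final sentence is immediate: if $\delta_Z\le2$ then by (i) $\delta_X\ge2$, and by (iii) $\delta_X\le3$ (else $\delta_X\ge4$ would force $\delta_X\le\delta_Z+1\le3$, a contradiction).

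The main obstacle I anticipate is the bookkeeping in parts (i) and (ii): pinning down exactly which of the fiber classes $e,\hat e$ (and the conic class $e+\hat e$) lie in $N_1(G,X)$, $N_1(\hat G,X)$, $N_1(E,X)$ respectively, using the intersection Table~\ref{table:1} and the fact that $G\cap\hat G=\emptyset$ while $E$ meets everything. The statement (iii) is less computational but requires carefully transporting the product decomposition of $X$ through the conic bundle to a product decomposition of $Z$ — I would lean on the rigidity of del Pezzo factors under the Mori-theoretic contractions $\sigma,\hat\sigma,\pi$, and on the fact established in \cite{Cas12} that in a product $S\times T$ the del Pezzo factor is canonically determined by the extremal structure.
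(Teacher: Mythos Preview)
Your approach to (i) is on track but overcomplicated. The key observation, which you almost state, is that $\phi|_G\colon G\to Z$ is an isomorphism. This means the composite $N_1(G)\xrightarrow{\iota_*}N_1(X)\xrightarrow{\phi_*}N_1(Z)$ is an isomorphism, so $\iota_*$ is injective and $\dim N_1(G,X)=\rho_G=\rho_Z=\rho_X-2$. That's it---codimension $2$ immediately, with no need to track $e$, $\hat e$, or the conic class separately. Your worry about ``getting the second codimension'' stems from allowing the possibility that $N_1(G,X)\cap\ker\phi_*$ is one-dimensional, but the injectivity of $\iota_*$ forces this intersection to be zero.

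For (ii), the first assertion $\delta_X\ge\delta_Z$ follows from a general fact: for any surjective morphism $f\colon X\to Z$ of Fano varieties one has $\delta_X\ge\delta_Z$ (this is \cite[Remark 3.3.18]{Cas12}). Your plan to pull back a divisor realizing $\delta_Z$ would eventually reproduce this, but the citation is cleaner. For the ``Moreover'' clause, the paper uses exactly the divisor you suggest, $E$, and computes $\dim N_1(E,X)=\dim N_1(A,Z)+1$ via $\sigma_*N_1(E,X)=N_1(A_Y,Y)\simeq N_1(A_Y,\hat G_Y)\simeq N_1(A,Z)$; your description (``plus only $e$ and one combination, losing $\hat e$'') is the right intuition but should be made into this precise dimension count.

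Part (iii) has a genuine gap. You write ``$\delta_X\ge 4$ plus part (ii) gives $\delta_Z\ge\delta_X-1\ge 3$,'' but part (ii) gives inequalities in the \emph{wrong direction}: it says $\delta_X\ge\delta_Z$ (and conditionally $\delta_X\ge\delta_Z+1$), which yields $\delta_Z\le\delta_X$, not $\delta_Z\ge\delta_X-1$. You cannot bootstrap a lower bound on $\delta_Z$ from (ii). Your earlier instinct---that the product structure $X\simeq S\times X'$ forces $\phi$ to be compatible with the factors---is the correct one, and the paper follows exactly this route using \cite[Lemma 2.10]{Rom19}: any contraction of a product $S\times X'$ (with $S$ del Pezzo, $\rho_S\ge 5$) is itself a product of contractions $\phi_1\times\phi_2$. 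Hence $Z\simeq Z_1\times Z_2$ with $\phi_1\colon S\to Z_1$, and one then runs through the three possibilities for $Z_1$ (namely $\mathbb P^1$, $S$ itself, or a del Pezzo with $\rho_{Z_1}=\rho_S-1$), checking in each case that $\delta_Z\ge\delta_{Z_1}\ge\delta_X-1$ or reaching a contradiction. You need this external lemma; the argument does not close without it.
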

	\begin{proof}
		Let $\iota:G\hookrightarrow X$ be the inclusion of $G$ in $X$. We notice that, since $\phi\circ \iota:G\to Z$ is an isomorphism, $\dim(N_1(G,X))= \rho_G=\rho_Z=\rho_X-2$. Therefore $\delta_X\ge \codim(N_1(G,X))=2$, which proves $(i)$.
		
		Since $\phi$ is surjective, we have $\delta_X\ge \delta_Z$ (see \cite[Remark 3.3.18]{Cas12}). Now, consider the exceptional divisor $E$ in $X$. We know that $\ker(\sigma_*:N_1(X)\to N_1(Y))$ is $1$-dimensional (generated by the class of a $1$-dimensional fiber of $\sigma$) and is contained in $N_1(E,X)$. We also have $\sigma_{*}N_1(E,X)=N_1(A_Y,Y)$. Moreover, since $\hat{G}_Y$ is a section of $\pi$, we have $N_1(\hat{G}_Y)\simeq N_1(\hat{G}_Y,Y)$, which implies that $N_1(A_Y, \hat{G}_Y)\simeq N_1(A_Y,Y)$. 
		Then
		\begin{align*}
			\dim(N_1(E,X))&=\dim(N_1(A_Y,Y))+1= \dim(N_1(A_Y,\hat{G}_Y))+1=\dim(N_1(A,Z))+1.
		\end{align*}  
		Therefore
		\begin{align*}
			\codim(N_1(E,X))&=\rho_X-\dim(N_1(A,Z))-1
			&=\rho_Z+2-\dim(N_1(A,Z))-1
			&=\codim(N_1(A,Z))+1.
		\end{align*}
		So if $\codim(N_1(A,Z))=\delta_Z$, we have $\delta_X\ge\delta_Z+1$, which completes the proof of $(ii)$.
		
		Suppose now that $\delta_X\ge 4$. Then by Theorem \ref{TheoremDelta4}, $X\simeq S\times X'$ where $S$ is a del Pezzo surface and $\rho_S=\delta_X+1\ge 5$. By \cite[Lemma 2.10]{Rom19}, $Z$ is the product of two Fano varieties $Z_1\times Z_2$ and $\phi$ is the product of two contractions $\phi_1: S\to Z_1$ and $\phi_2:X'\to Z_2$. Recall that if $V_1$ and $V_2$ are two Fano varieties, $\delta_{V_1\times V_2}=\max\{\delta_{V_{1}},\delta_{V_2}\}$ (\cite[Lemma 5]{Cas23}). There are three cases for $Z_1$. 
		\begin{enumerate}
			\item $Z_1=\mathbb{P}^1$. In that case
			$2=\rho_X-\rho_Z=(\rho_S-1)+(\rho_{X'}-\rho_{Z_2})\ge 4$, a contradiction.
			\item $Z_1=S$, $\phi_1=id_S$. In that case, $\delta_Z\ge\delta_S=\delta_X$. By $(ii)$, we have $\delta_Z\le\delta_X$. Hence $\delta_X=\delta_Z$.
			\item $\rho_{Z_1}=\rho_{S}-1$ and $\phi_1$ is divisorial; $\rho_{Z_2}=\rho_{X'}-1$.  In that case $Z_1$ is still a surface and we have
			\[
			\delta_Z\ge\delta_{Z_1}=\rho_{Z_1}-1=\rho_S-2=\delta_X-1.
			\]
		\end{enumerate}
		By $(ii)$, we  then have $\delta_{X}\in\{\delta_Z,\delta_Z+1\}$, which proves $(iii)$.
	\end{proof}
	
	Given a smooth Fano variety $X=\mathscr{C}_A(Z;A,D)$, we can also make some comments about curves of anticanonical degree 1 in $Z$.

	\begin{remark}\label{Degree1curve}
		Let $X=\mathscr{C}_A(Z;A,D)$ be a smooth Fano variety arising from Construction A. Suppose $C$ is an irreducible curve in $Z$ such that $-K_Z\cdot C=1$. Then by Proposition \ref{ConstructionA} it follows that 
		\begin{align*}
			\begin{cases}
				0<(-K_Z+D-A)\cdot C=1+D\cdot C-A\cdot C\\
				0<(-K_Z-D)\cdot C=1-D\cdot C 
			\end{cases}\\
		\end{align*} 
		i.e.~$ A\cdot C\le D\cdot C\le 0$. Therefore there are only two possible cases:
		\begin{enumerate}
			\item $A\cdot C=D\cdot C=0 \implies $ $C\subset A$ or $C\cap A=\emptyset$. 
			\item $A\cdot C<0\implies $ $C\subset A$.   
		\end{enumerate}
		
	\end{remark}
	
	More necessary conditions for $Z$ to be the base of a construction of type A yielding a Fano variety $X$ will be given in Section  \ref{Standard4folds} (see Remark \ref{RemarkStandard4folds}). 
	\section{Construction B}
	In this section we present Construction B, which is a generalization of Construction A.
	\subsection{Setting} \label{SettingBCenter}
	Let $Z$ be a smooth projective variety with $\dim(Z)=n-1\ge 2$, $\mathcal{E}$ a rank $2$ vector bundle on $Z$, $Y:=\mathbb{P}_Z(\mathcal{E})$ and $\pi\colon Y\to Z$ the projection. Let $A\subset Z$ be a smooth irreducible hypersurface and assume that $\pi$ admits a section, which we denote by $S_Y\subset Y$. Such section $S_Y$ corresponds to a surjection $\beta:\mathcal{E}\twoheadrightarrow \mathcal{L}$ onto a line bundle $\mathcal{L}$ on $Z$.  
	Call $A_Y:=\pi^{-1}(A)\cap S_Y$, let $\sigma\colon X\to Y$ be the blow up of $Y$ along $A_Y$. The composition $\phi:=\pi\circ \sigma$ is a conic bundle. Observe that $X$ is a smooth projective variety, $\dim(X)=\dim(Y)=\dim(Z)+1$ and $\rho_X=\rho_Y+1=\rho_Z+2$. We will refer to this setting as \textbf{Construction B}, and we will use the following notation:
	\[X=\mathscr{C}_B(Z;A,S_Y)=\mathscr{C}_B(Z;A,\beta:\mathcal{E}\twoheadrightarrow \mathcal{L}).\] 
	
	Observe that if we further assume the existence of a section of $\pi$ disjoint from $S_Y$, then we recover the same setting presented in Construction A, which is therefore a special instance of this new more general construction. 
	
	More precisely, we know that the existence of a section of $\pi$ disjoint from $S_Y$ is equivalent to the fact that $\mathcal{E}$ is decomposable into two factors  $\mathcal{E}=\mathcal{M}\oplus\mathcal{L}$, and the surjection $\beta:\mathcal{E}\twoheadrightarrow \mathcal{L}$ defining $S_Y$  coincides with the projection $p_2$ onto the second factor. This yields a construction $\mathscr{C}_B(Z;A,p_2\colon \mathcal{M}\oplus\mathcal{L}\twoheadrightarrow\mathcal{L})$ which is isomorphic to $\mathscr{C}_B(Z;A,p_2\colon \mathcal{O}\oplus(\mathcal{L}\otimes\mathcal{M}^*)\twoheadrightarrow\mathcal{L}\otimes\mathcal{M}^{*})$, because $\mathbb{P}_Z(\mathcal{E})\simeq\mathbb{P}_Z(\mathcal{E}\otimes\mathcal{M}^{*})$. By definition, this construction coincides with $\mathscr{C}_A(Z;A,D)$ where $\mathcal{O}(D)\simeq\mathcal{L}\otimes \mathcal{M}^{*}$. 
	
	\begin{remark}\label{BirationalGeometryB}
		Let $X=\mathscr{C}_B(Z;A,S_Y)$. Consider the exceptional divisor $E$ of $\sigma\colon X\to Y$, and the strict transform $\hat{E}$ of $\pi^{-1}(A)$ in $X$. Both have a natural $\mathbb{P}^1$-bundle structure over $A$. In $N_1(X)$, let $e$ (resp.~$\hat{e}$) denote the class of a one dimensional fiber in $E$ (resp.\ in $\hat{E}$). Let $S$ be the strict transform of $S_Y$ in $X$. The following intersection table holds:
		\begin{center}
			\begin{tabular}{|c | c | c |c|c|} 
				\hline
				$\cdot$ & $-K_X$ & $E$ & $\hat{E}$ & $S$\\ 
				\hline
				$e$ &1& -1 & 1 &1 \\ 
				\hline
				$\hat{e}$&1 & 1 & -1 &0\\
				\hline
			\end{tabular}
		\end{center}
		The cone $\NE(\phi)$ of the curves contracted by $\phi$ is a two-dimensional $K_X$-negative subcone of $\NE(X)$ generated by $e$ and $\hat{e}$, which span two extremal rays. In particular, the curves in $\hat{e}$ are contracted by a divisorial contraction $\hat{\sigma}\colon X\to \hat{Y}$ onto a projective variety $\hat{Y}$, such that  $\hat{\sigma}$ is the blow-up of a codimension 2 subvariety in $\hat{Y}$ and $\hat{E}$ is its exceptional divisor.     
	\end{remark}   
	
	\subsection{Conditions for being Fano} 	
	Observe that the conic bundle $X\to Z$ given by Construction B has no nonreduced fibers, so that by \cite[Proposition 4.3]{Wis91} if $X$ is Fano, then $Z$ is Fano as well.
	We want to study conditions for $X$ to be Fano that can be tested on $Z$, provided that $Z$ is Fano. 
	\begin{theorem}\label{ConstructionB}
		Let $X=\mathscr{C}_B(Z;A,S_Y)$, where $Z$ is Fano and the section $S_Y$ is associated to a short exact sequence of this form:
		\begin{align}\label{SectionSequence}
			0\to\mathcal{O}_Z\longrightarrow\mathcal{E}\longrightarrow\mathcal{O}_Z(D)\to 0
		\end{align}  
		Observe that $\mathcal{O}_Z(D)\simeq\det(\mathcal{E})$ is isomorphic to the normal bundle $\mathcal{N}_{S_Y/Y}$, and $\mathcal{O}_Y(S_Y)\simeq\mathcal{O}_Y(1)$. 	 
		Consider the following conditions 
		\begin{itemize}
			\item[(I)] $-K_Z+D-A$  ample on $Z$.
			\item[(II)] $\mathcal{E}\otimes\mathcal{O}_Z(-K_Z-D)$  ample on $Z$.
			\item[(III)] $\mathcal{E}|_A\otimes \mathcal{O}_Z(-K_Z-D)|_{A}$ ample on $A$.
		\end{itemize}
		
		Then:  
		\begin{align*}
			\begin{cases}
				\text{(I)}\\
				\text{(II)}
			\end{cases}
			\implies 
			\text{ } X \text{ is Fano }\implies 
			\begin{cases}
				\text{(I)}\\
				\text{(III)}.
			\end{cases}
		\end{align*}
	\end{theorem}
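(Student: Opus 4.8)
The plan is to follow the template of the proof of Proposition~\ref{ConstructionA}: to prove ``$(I)\wedge(II)\Rightarrow X$ Fano'' I would show that $-K_X$ is strictly nef and big and then invoke the base-point-free theorem, and to prove the converse I would restrict $-K_X$ to two natural divisors. The first step is to set up the numerical dictionary on $X$. Put $\xi=\mathcal{O}_Y(1)$; since $S_Y$ is the section attached to the quotient $\mathcal{E}\twoheadrightarrow\mathcal{O}_Z(D)$ whose kernel is $\mathcal{O}_Z$, one has $\mathcal{O}_Y(S_Y)\simeq\xi$, and combined with $-K_Y\equiv 2\xi+\pi^{*}(-K_Z-D)$ and $\sigma^{*}\xi\equiv S+E$ (because $S_Y$ is smooth and meets the centre $A_Y$ with multiplicity one) the relation $-K_X\equiv\sigma^{*}(-K_Y)-E$ gives the key identity
\[
-K_X\equiv 2S+E+\phi^{*}(-K_Z-D),
\]
which one may cross-check against the intersection table of Remark~\ref{BirationalGeometryB}. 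I would also record: $\sigma|_{S}\colon S\to S_Y\to Z$ is an isomorphism (blowing up the Cartier divisor $A_Y$ of $S_Y$ changes nothing), and likewise $\hat{E}$ is isomorphic to $\pi^{-1}(A)=\mathbb{P}_A(\mathcal{E}|_A)$; moreover $\mathcal{N}_{S/X}\simeq\mathcal{O}_Z(D-A)$ by the same strict-transform formula as in Construction~A, and the section $A_Y\subset\mathbb{P}_A(\mathcal{E}|_A)$ has class $\mathcal{O}_{\mathbb{P}_A(\mathcal{E}|_A)}(1)$ because its defining sub-bundle is $\mathcal{O}_A$ (the restriction of the sub-bundle $\mathcal{O}_Z$).

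With this in hand the converse is quick. By adjunction, $-K_X|_{S}\equiv -K_S+\mathcal{N}_{S/X}\equiv -K_Z+D-A$, so if $X$ is Fano then $(I)$ holds. Running adjunction together with the strict-transform normal-bundle formula on $\hat{E}\simeq\mathbb{P}_A(\mathcal{E}|_A)$ (with projection $\pi_A$) yields
\[
-K_X|_{\hat{E}}\equiv \mathcal{O}_{\mathbb{P}_A(\mathcal{E}|_A)}(1)+\pi_A^{*}\!\left(\mathcal{O}_Z(-K_Z-D)|_A\right)=\mathcal{O}_{\mathbb{P}_A(\mathcal{E}|_A\otimes\mathcal{O}_Z(-K_Z-D)|_A)}(1),
\]
and since $\mathcal{O}_{\mathbb{P}(\mathcal{F})}(1)$ is ample exactly when $\mathcal{F}$ is, ampleness of $-K_X$ forces $(III)$.

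For the sufficiency I would first rephrase $(II)$: $\mathcal{E}\otimes\mathcal{O}_Z(-K_Z-D)$ is ample on $Z$ if and only if the divisor $L_Y:=\xi+\pi^{*}(-K_Z-D)=\mathcal{O}_{\mathbb{P}(\mathcal{E}\otimes\mathcal{O}_Z(-K_Z-D))}(1)$ is ample on $Y$. Since $\sigma^{*}L_Y\equiv S+E+\phi^{*}(-K_Z-D)$, the key identity collapses to the clean decomposition
\[
-K_X\equiv \sigma^{*}L_Y+S.
\]
Bigness of $-K_X$ is then immediate: $\sigma^{*}L_Y$ is nef and big (the birational pullback of an ample class) and $S$ is effective. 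For strict nefness I would argue by cases on an irreducible curve $\Gamma\subset X$: if $\Gamma$ is contracted by $\phi$ its class lies in $\NE(\phi)=\mathbb{R}_{\ge 0}e+\mathbb{R}_{\ge 0}\hat{e}$, which is $K_X$-negative by the table of Remark~\ref{BirationalGeometryB}; if $\Gamma$ is not $\phi$-contracted and $\Gamma\not\subset S$, then $\sigma_{*}\Gamma$ is a genuine curve and $-K_X\cdot\Gamma=L_Y\cdot\sigma_{*}\Gamma+S\cdot\Gamma>0$ because $L_Y$ is ample and $S$ is effective; finally if $\Gamma\subset S$, then $-K_X\cdot\Gamma=(-K_X|_{S})\cdot\Gamma>0$ because $-K_X|_{S}\equiv -K_Z+D-A$ is ample by $(I)$. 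Thus $-K_X$ is strictly nef and big, hence ample by the base-point-free theorem, and so $X$ is Fano.

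The step I expect to be the real work is the second one: nailing down the identity $-K_X\equiv 2S+E+\phi^{*}(-K_Z-D)$ together with the normal bundle $\mathcal{N}_{S/X}$, the identification $\hat{E}\simeq\mathbb{P}_A(\mathcal{E}|_A)$, and the class of $A_Y$ inside it. All of these hinge on the \emph{asymmetric} feature that the sub-bundle in the defining sequence is exactly $\mathcal{O}_Z$, which is precisely what makes $\mathcal{O}_Y(S_Y)\simeq\xi$ and what makes $A_Y$ a section of class $\mathcal{O}_{\mathbb{P}_A(\mathcal{E}|_A)}(1)$. Everything else is a routine adaptation of the proof of Proposition~\ref{ConstructionA}; in particular I would not attempt to bridge the gap between the sufficient pair $(I),(II)$ and the necessary pair $(I),(III)$, since $(III)$, being only a condition on $A$, is genuinely weaker than $(II)$.
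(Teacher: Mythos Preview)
Your proposal is correct and follows essentially the same approach as the paper: both hinge on the decomposition $-K_X\equiv\sigma^{*}W_Y+S$ with $W_Y=S_Y+\pi^{*}(-K_Z-D)$ the tautological class of $\mathbb{P}(\mathcal{E}\otimes\mathcal{O}_Z(-K_Z-D))$, and both obtain the necessary conditions by restricting $-K_X$ to $S$ and to $\hat{E}\simeq\mathbb{P}_A(\mathcal{E}|_A)$. Your case analysis for strict nefness (handling $\phi$-contracted curves via $\NE(\phi)$ rather than separately treating $\Gamma\subset E$) and your bigness argument (nef and big plus effective, instead of passing to $m\sigma^{*}W_Y-cE$ ample) are mild streamlinings of the paper's argument, not a different route.
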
  
	\begin{proof}
		Let $E$ denote the exceptional divisor in $X$, and $S$ the  strict transform of $S_Y$ in $X$. We know that $\sigma^{*}S_Y\sim S+E$. Hence 
		\begin{align*}
			\mathcal{N}_{S/X}&=\mathcal{O}_S(S) \simeq\mathcal{O}_S(\sigma^{*}S_Y)\otimes\mathcal{O}_S(-E) \simeq \mathcal{O}_{S_Y}(S_Y)\otimes\mathcal{O}_{S_Y}(-A_Y)\simeq \mathcal{O}_Z(D-A).
		\end{align*}
		Therefore, by the adjunction formula,
		$\mathcal{O}_S(-K_X)=\mathcal{O}_S(-K_S)\otimes\mathcal{N}_{S/X}\simeq\mathcal{O}_Z(-K_Z)\otimes\mathcal{O}_Z(D-A)$. Hence 
		\begin{center}
			$-K_X$ ample $\implies$ $-K_X|_S$ ample $\iff$ condition \emph{(I)}. 
		\end{center}
		Let $\hat{E}$ denote the strict transform in  $X$ of $\pi^{-1}(A)$. Then $\pi^{-1}(A)\sim\hat{E}+E$, and \begin{align*} \mathcal{N}_{\hat{E}/X}=\mathcal{O}_{\hat{E}}(\hat{E})&\simeq \mathcal{O}_{\hat{E}}(\pi^{-1}(A))\otimes\mathcal{O}_{\hat{E}}(-E)\\&\simeq \mathcal{O}_{\pi^{-1}(A)}(\pi^{-1}(A))\otimes\mathcal{O}_{\pi^{-1}(A)}(-A_Y)\\&\simeq \pi^{*}\mathcal{O}_A(A)\otimes\mathcal{O}_{\pi^{-1}(A)}(-1)
		\end{align*}
		where we denote by $\mathcal{O}_{\pi^{-1}(A)}(-1)$ the dual of the tautological line bundle of $\pi^{-1}(A)=\mathbb{P}_A(\mathcal{E}|_A)$.
		Recall that $\hat{E}$ is the exceptional divisor of some blow-up morphism that contracts the fibers of the natural $\mathbb{P}^1$-bundle structure of $\hat{E}$ over $A$. (see Remark \ref{BirationalGeometryB}). 
		Therefore $\mathcal{O}_{\hat{E}}(1)=\mathcal{O}_{\hat{E}}(-\hat{E})=\pi^{*}\mathcal{O}_A(-A)\otimes\mathcal{O}_{\pi^{-1}(A)}(1)$, which implies ${\hat{E}=\mathbb{P}_A\bigl(\mathcal{E}|_A\otimes \mathcal{O}_A(-A)\bigr)}$. It follows that
		\begin{align*}
			\mathcal{O}_{\hat{E}}(K_{\hat{E}})\simeq&\mathcal{O}_{\hat{E}}(2\hat{E})\otimes\phi|_{\hat{E}}^{*}\bigl(\mathcal{O}_A(K_A)\otimes \det(\mathcal{E}|_A\otimes\mathcal{O}_A(-A))\bigr)\\\simeq&\mathcal{O}_{\hat{E}}(2\hat{E})\otimes\phi|_{\hat{E}}^{*}\bigl(\mathcal{O}_A(K_Z+A-2A)\otimes \det(\mathcal{E}|_A)\bigr)\\\simeq&\mathcal{O}_{\hat{E}}(2\hat{E})\otimes\phi|_{\hat{E}}^{*}\bigl(\mathcal{O}_A(K_Z+D-A)\bigr).
		\end{align*} 
		Thus
		\begin{align*}
			\mathcal{O}_{\hat{E}}(-K_X)\simeq& \mathcal{O}_{\hat{E}}(-K_{\hat{E}})\otimes\mathcal{O}_{\hat{E}}(\hat{E})
			\simeq\mathcal{O}_{\hat{E}}(-\hat{E})\otimes\phi_{\hat{E}}^{*}\bigl(\mathcal{O}_{A}(-K_Z-D+A)\bigr)
		\end{align*}
		which is the tautological line bundle for  \[\mathbb{P}_{A}\bigl(\mathcal{E}|_A\otimes\mathcal{O}_A(-A)\otimes\mathcal{O}_A(-K_Z-D+A)\bigr)=\mathbb{P}_{A}\bigl(\mathcal{E}|_A\otimes\mathcal{O}_A(-K_Z-D)\bigr).\] Thus, $-K_X|_{\hat{E}}$ is ample if and only if condition \emph{(III)} is satisfied. 
		This  concludes  the proof of the statement: $X$ Fano $\implies$ \emph{(I)}+\emph{(III)}. 
		
		Observe that $E\simeq \mathbb{P}_{A_Y}(\mathcal{N}_{A_Y/Y}^{*})$, and $\mathcal{N}_{A_Y/Y}\simeq\mathcal{O}_{A_Y}(\hat{S}_{Y})\oplus\mathcal{O}_{A_Y}(\pi^{-1}(A))\simeq \det\mathcal{E}|_A\oplus\mathcal{O}_A(A)$ because $A_Y$ is the smooth complete intersection of the divisors $S_Y$ and $\pi^{-1}(A)$. By similar computation as for $-K_X|_{\hat{E}}$, we can conclude that $-K_X|_{E}$ is ample if and only if $(-K_Z+D-A)|_A$ is ample on $A$. 
		
		Now, suppose conditions \emph{(I)} and \emph{(II)} are satisfied. 
		Observe that 
		\begin{align*}
			\mathcal{O}_X(-K_X)\simeq& \mathcal{O}_X\bigl(\sigma^{*}(-K_Y)\bigr)\otimes\mathcal{O}_X(-E)\simeq \mathcal{O}_X\bigl(2\sigma^{*}(S_Y)\bigr)\otimes\phi^{*}\bigl(\mathcal{O}(-K_Z)\otimes \det(\mathcal{E})^{*}\bigr)\otimes\mathcal{O}_X(-E)\\
			\simeq& \mathcal{O}_X\bigl(2\sigma^{*}(S_Y)+\phi^{*}(-K_Z-D)-E\bigr).
		\end{align*}  
		Now, consider the divisor $W_Y:= S_Y+\pi^{*}(-K_Z-D)$ in $Y$. It is the tautological divisor for ${\mathbb{P}(\mathcal{E}\otimes\mathcal{O}(-K_Z-D))}$ and \emph{(II)} implies that $W_Y$ is ample. Then there exist $m,c\in\mathbb{N}$ such that 
		\begin{align*}
			W_X:=m\sigma^{*}W_Y-cE
		\end{align*}
		is ample. 
		Observe that $-K_X\sim \sigma^{*}(W_Y)+\sigma^{*}(S_Y)-E\sim \sigma^{*}(W_Y)+S$. Therefore 
		\[
		-mK_X\sim m\sigma^{*}(W_Y)+mS\sim W_X+cE+mS
		\]
		which is the sum of an ample divisor and an effective one. This proves that \emph{(II)} implies that $-K_X$ is big. 
		
		Let $\Gamma$ be an irreducible curve in $X$. 
		
		\begin{itemize}
			\item If $\Gamma\subset S$, then \emph{(I)} implies that $-K_X\cdot \Gamma>0$, by Kleiman's criterion of ampleness. 
			\item If $\Gamma\subset E$, then again \emph{(I)} implies that $-K_X\cdot \Gamma>0$.
			\item If $\Gamma$ is not contained in $S$ nor in $E$, then $\sigma_{*}\Gamma$ is an irreducible curve in $Y$ and \emph{(II)} implies $-K_X\cdot\Gamma=S\cdot\Gamma+\sigma^{*}(W_Y)\cdot\Gamma=S\cdot\Gamma+W_Y\cdot\sigma_{*}\Gamma>0$, because $S\cdot\Gamma\ge 0$ and $W_Y$ is ample on $Y$. 
		\end{itemize}
		This proves that \emph{(I)} and \emph{(II)} imply that $-K_X$ is strictly nef. Since it is also big, we conclude that \emph{(I)} and \emph{(II)} imply that $-K_X$ is ample, by the base-point-free theorem. 
	\end{proof}
	\begin{remark}
		Observe that whenever the exact sequence (\ref{SectionSequence}) in Theorem \ref{ConstructionB} splits, we have $\mathcal{E}\simeq \mathcal{O}\oplus\mathcal{O}(D)$ and Construction B coincides with Construction A, with $\hat{G}_Y=S_Y$ and $D$ as the Cartier divisor such that $\mathcal{O}(D)=\mathcal{N}_{\hat{G}_Y/Y}$. Observe that if we call $D_1:=D$ and $D_2:=A-D$, condition  \emph{(I)} in Theorem \ref{ConstructionB} is the same as asking $-K_Z-D_2$ ample on $Z$. Moreover, in this setting condition \emph{(II)} in Theorem \ref{ConstructionB}  becomes equivalent to requiring that $-K_Z-D_1$ is ample on $Z$, so that we recover both necessary and sufficient conditions for $X$ to be Fano in Proposition \ref{ConstructionA}. Indeed in this case
		\begin{align*}
			\mathcal{E}\otimes\mathcal{O}(-K_Z-D)\simeq \mathcal{O}(-K_Z-D)\oplus\mathcal{O}(-K_Z)
		\end{align*}
		which is ample if and only if both factors are ample line bundles on $Z$, i.e.~$-K_Z-D$ and $-K_Z$ are ample on $Z$.
		
		Similarly, if $\mathcal{E}\simeq \mathcal{O}\oplus\mathcal{O}(D)$, condition \emph{(III)} in Theorem \ref{ConstructionB} becomes equivalent to requiring that $(-K_Z-D)|_A$ is ample on $A$.
		
		From this observation, it follows that conditions \emph{(I)} and \emph{(III)} in Theorem \ref{ConstructionB} are NOT sufficient for $X$ to be Fano. For example, take $Z=\mathbb{P}^1\times\mathbb{P}^1$, $A\equiv\mathbf{l_1}$, $D\equiv2\mathbf{l_1}$, where $\mathbf{l_1}$ and $\mathbf{l_2}$ are the classes of the two rulings in $N^1(Z)$, and consider $X=\mathscr{C}_B(Z;A,\mathcal{O}\oplus\mathcal{O}(D)\twoheadrightarrow\mathcal{O}(D))=\mathscr{C}_A(Z;A,D)$. Conditions $(I)$ and $(III)$ are satisfied, but $-K_Z-D=2\mathbf{l}_2$ is not ample, thus $X$ is not Fano by Proposition \ref{ConstructionA}. 
		
		As for conditions \emph{(I)} and \emph{(II)}, we will see in Remark \ref{Remark(2,1,1)} that they are NOT necessary for $X$ to be Fano.  
	\end{remark}
	\begin{remark} \label{ShiftedRemark}
		If a $\mathbb{P}^1$-bundle $Y=\mathbb{P}(\mathcal{E})\to Z$ has a section $S_Y$ defined by the exact sequence (\ref{SectionSequence}), we say that $\mathcal{E}$ is ``normalized'' with respect to the section $S_Y$. 
		
		Suppose now that a section $S_Y$ of $\pi$ is given by a general short  exact sequence 
		\begin{align} \label{generalSection}
			0\to\mathcal{I}\longrightarrow\mathcal{E}\longrightarrow\mathcal{L}\to 0
		\end{align}
		where $\mathcal{I}$ and $\mathcal{L}$ are two line bundles on $Z$. Call $\mathcal{N}:=\mathcal{N}_{S_Y/Y}\simeq \mathcal{I}^{*}\otimes\mathcal{L}$ and $\mathcal{E'}:=\mathcal{I}^{*}\otimes\mathcal{E}$. Now, tensoring (\ref{generalSection}) by $\mathcal{I}^{*}$, we get the sequence 
		\begin{align}
			0\to\mathcal{O}_Z\longrightarrow\mathcal{E'}\longrightarrow\mathcal{N}\to 0
		\end{align}
		which corresponds to a section of $\mathbb{P}(\mathcal{E'})\longrightarrow Z$ isomorphic to $S_Y$ through the isomorphism $\mathbb{P}(\mathcal{E'})\simeq\mathbb{P}(\mathcal{E})$. This shows that it is always possible to describe a $\mathbb{P}^1$-bundle as the projectivization of a ``normalized'' vector bundle with respect to a given section. 
	\end{remark}   
	
	As for the Lefschetz defect of $X$, Lemma \ref{deltaLemma} also holds in the setting of Construction B. Indeed, the proof is still valid after replacing $\hat{G}_Y$ with $S_Y$ and $\hat{G}$ with $S$.
	
	\begin{lemma} \label{deltaLemmaB}
		Suppose $X=\mathscr{C}_B(Z;A,S_Y)$ is Fano. Then we have:
		\begin{itemize}
			\item[(i)] $\delta_X\ge 2$.
			\item[(ii)]  $\delta_X\ge \delta_Z$. Moreover, if $\codim(N_1(A,Z))=\delta_Z$, then $\delta_X\ge \delta_Z+1$.  
			\item[(iii)] If $\delta_X\ge 4$, then $\delta_X\in\{\delta_Z,\delta_Z+1\}$.  
		\end{itemize}
		In particular,  If $\delta_Z\le 2$ then $2\le\delta_X \le 3$.
	\end{lemma}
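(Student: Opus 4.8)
The statement is that Lemma~\ref{deltaLemma} carries over verbatim to Construction~B, and indeed the text already observes that ``the proof is still valid after replacing $\hat{G}_Y$ with $S_Y$ and $\hat{G}$ with $S$''. So the plan is to rerun the three arguments of Lemma~\ref{deltaLemma}, checking at each step that only the formal features of the Construction~A setup that survive in Construction~B are used; no properties special to the decomposable case $\mathcal{E}=\mathcal{O}\oplus\mathcal{O}(D)$ should be invoked.

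\textbf{Step (i).} Let $\iota\colon G\hookrightarrow X$ be replaced by $\iota\colon S\hookrightarrow X$, where $S$ is the strict transform of the section $S_Y$. The key input is that $\phi\circ\iota\colon S\to Z$ is an isomorphism: since $S_Y$ is a section of $\pi$ it is isomorphic to $Z$, and $\sigma$ restricted to $S$ is an isomorphism onto $S_Y$ because $A_Y\subset S_Y$ has codimension $\ge 2$ in $Y$ but codimension~$1$ in $S_Y$ — wait, more precisely $\sigma|_S\colon S\to S_Y$ is the blow-up of the codimension-one subvariety $A_Y\subset S_Y$, hence an isomorphism. Therefore $\dim N_1(S,X)=\rho_S=\rho_Z=\rho_X-2$, giving $\delta_X\ge\codim N_1(S,X)=2$.

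\textbf{Step (ii).} Surjectivity of $\phi$ gives $\delta_X\ge\delta_Z$ by \cite[Remark~3.3.18]{Cas12}, exactly as before. For the second assertion, one reruns the dimension count for the exceptional divisor $E$: $\ker(\sigma_*\colon N_1(X)\to N_1(Y))$ is one-dimensional, generated by a fiber class of $\sigma$, and is contained in $N_1(E,X)$; moreover $\sigma_*N_1(E,X)=N_1(A_Y,Y)$. The one point that needs the Construction~B hypothesis is the identification $N_1(A_Y,S_Y)\simeq N_1(A_Y,Y)$: this holds because $S_Y$ is a section of $\pi$, so $N_1(S_Y)\to N_1(S_Y,Y)$ is an isomorphism (a section of a $\mathbb{P}^1$-bundle has the property that $N_1$ injects into $N_1$ of the total space), and $A_Y\subset S_Y$. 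Chaining the equalities as in the original proof gives $\dim N_1(E,X)=\dim N_1(A,Z)+1$, hence $\codim N_1(E,X)=\codim N_1(A,Z)+1$, and the claim follows.

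\textbf{Step (iii).} Assume $\delta_X\ge 4$. By Theorem~\ref{TheoremDelta4}, $X\simeq S'\times X'$ with $S'$ a del Pezzo surface, $\rho_{S'}=\delta_X+1\ge 5$. Apply \cite[Lemma~2.10]{Rom19} to the conic bundle $\phi\colon X\to Z$ to conclude $Z=Z_1\times Z_2$ with $\phi$ a product of contractions $\phi_1\colon S'\to Z_1$, $\phi_2\colon X'\to Z_2$; use $\delta_{V_1\times V_2}=\max\{\delta_{V_1},\delta_{V_2}\}$ from \cite[Lemma~5]{Cas23}, and run the same three-case analysis on $Z_1$ ($Z_1=\mathbb{P}^1$ forces $2=\rho_X-\rho_Z\ge 4$, contradiction; $Z_1=S'$ gives $\delta_X=\delta_Z$; $\phi_1$ divisorial gives $\delta_Z\ge\delta_{Z_1}=\rho_{S'}-2=\delta_X-1$), concluding $\delta_X\in\{\delta_Z,\delta_Z+1\}$ via part~(ii). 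Nothing here is specific to Construction~A. The final ``in particular'' clause is immediate from (i)--(iii).

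\textbf{Main obstacle.} The only genuinely non-formal point — and the one I would be careful to spell out — is that $\sigma|_S\colon S\to S_Y$ really is an isomorphism and that $A_Y=\pi^{-1}(A)\cap S_Y$ is a smooth irreducible codimension-two subvariety of $Y$ lying in the section $S_Y$ with codimension one there; both follow from the smoothness of $A$ in $Z$ and the fact that $S_Y$ is a section, but they are what make the substitution $\hat{G}_Y\leadsto S_Y$, $\hat{G}\leadsto S$ legitimate. Beyond that, the proof is a line-by-line transcription of Lemma~\ref{deltaLemma}.
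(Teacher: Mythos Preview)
Your proposal is correct and follows exactly the approach the paper indicates: the paper gives no separate proof but simply states that the argument of Lemma~\ref{deltaLemma} goes through after replacing $\hat{G}_Y$ with $S_Y$ and $\hat{G}$ with $S$, and you have faithfully carried out that transcription, correctly identifying in your ``main obstacle'' paragraph the one point that needs a word of justification (that $\sigma|_S\colon S\to S_Y$ is an isomorphism because $A_Y$ has codimension one in $S_Y$).
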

	\section{Maruyama's elementary transformation}\label{MaruyamaSection}
	In this section we explore the connections between Construction A, Construction B, and the setting of Maruyama's elementary transformations (see \cite{Mar82}). It turns out that working in Maruyama's framework provides an explicit embedding for any variety arising from Construction B into the fibered product of two projective bundles. This embedding sheds light on the symmetries and asymmetries of the structure of varieties $X$ arising from Construction B, and might be a good starting point for trying to fill the gap between necessary and sufficient conditions for $X$ to be Fano in Theorem \ref{ConstructionB}, or at least for better understanding the reason behind such a gap.
	
	\subsection{Setting}\label{Maruyama}
	We present the setting of the Maruyama construction described in \cite{Mar82}. 
	Let $\mathcal{E}$ be a locally free sheaf on a locally noetherian scheme $Z$ and $\mathcal{F}$ be a locally free sheaf on an effective Cartier divisor $A$ of $Z$. If there is a surjective homomorphism $\psi: \mathcal{E}\to\mathcal{F}$ of $\mathcal{O}_Z$-modules, then $\mathcal{E}'=\ker(\psi)$ is  a locally free sheaf on $Z$. Indeed, consider $p\in Z-A$. Then clearly $\mathcal{F}_p=0$ and $\ker(\psi)_p\simeq\mathcal{E}_p$, which is a free $\mathcal{O}_{Z,p}$-module. On the other hand, if $p\in A$, then let $R$ denote the local ring $\mathcal{O}_{Z,p}$. We have $\mathcal{O}_{A,p}=R/(f)$ for some nonzero divisor $f\in R$, $\mathcal{E}_p\simeq R^{\oplus k}$ and $\mathcal{F}_p\simeq \left(R/(f)\right)^{\oplus l}$ for some $k,l\in\mathbb{N}$. Recall that 
	\[
	0\to R\xrightarrow{\cdot f} R\to R/(f)\to 0
	\] 
	is a projective resolution of $R/(f)$, so that the projective dimension of $R/(f)$ is $pd_R(R/(f))\le 1$, which yields $pd_R(\mathcal{F}_p)\le 1$. Since $\mathcal{E}_p$ is projective, it follows that $\mathcal{E}_p'$ is also projective, hence free with rank $k$.   
	
	The procedure to obtain $\mathcal{E}'$ from $\mathcal{E}$ is said to be the \emph{(sheaf  theoretic) Maruyama's elementary transformation} of $\mathcal{E}$ along $\mathcal{F}$ and we denote it by $\mathcal{E}'=\text{elm}_{Z}(\mathcal{E},\mathcal{F})$. The sheaf $\mathcal{E}'$ is said  to be the \emph{elementary transform} of $\mathcal{E}$ along $\mathcal{F}$. Such elementary transformation yields the exact commutative diagram in Figure \ref{figure:2},
	\begin{figure}[H]
		\begin{center}
			\begin{tikzcd}[sep=small]
				& 0 & 0 & &\\
				0\ar[r]&\mathcal{F}'\ar[u]\ar[r]& \mathcal{E}|_{A}\ar[r]\ar[u]&\mathcal{F}\ar[r]&0\\
				0\ar[r]&\mathcal{E}'\ar[u,"\psi'"]\ar[r]&\mathcal{E}\ar[u]\ar[r,"\psi"]&\mathcal{F}\ar[r]\ar[u,equal]&0\\
				&\mathcal{E}(-A)\ar[u]\ar[r,equal]&\mathcal{E}(-A)\ar[u]&&\\
				&0\ar[u]&0\ar[u]&&
			\end{tikzcd}
		\end{center}
		\caption{Maruyama's elementary transformation diagram.}
		\label{figure:2}
	\end{figure} \noindent
	where the leftmost vertical exact sequence gives us the \emph{inverse} of the given transformation, i.e.~${\mathcal{E}(-A)=\text{elm}_Z(\mathcal{E}',\mathcal{F}')}$. 
	Observe that $\mathbb{P}(\mathcal{F})$ and $\mathbb{P}(\mathcal{F}')$ are closed subschemes of $\mathbb{P}(\mathcal{E})$ and  $\mathbb{P}(\mathcal{E}')$ respectively, because $\psi$ and $\psi'$ are surjective. We have the following geometric interpretation of the above operation:  
	\begin{theorem}[\cite{Mar73}, Theorem 1.1]\label{MaruyamaThm}
		Let $\sigma\colon  X\to \mathbb{P}(\mathcal{E})$ (or  $\sigma':X'\to\mathbb{P}(\mathcal{E}')$ ) be the blowing-up along $\mathbb{P}(\mathcal{F})$ (or  $\mathbb{P}(\mathcal{F}')$ resp.).Then we have an isomorphism $h:X\to X'$ of $Z$-schemes. For the projection $\pi\colon \mathbb{P}(\mathcal{E})\to Z$ (or  $\pi':\mathbb{P}(\mathcal{E}')\to Z$ ), the proper transform of $\sigma^{-1}\bigr(\pi^{-1}(A)\bigr)$ (or  $(\sigma')^{-1}\bigr((\pi')^{-1}(A)\bigr)$ resp.) is equal to the exceptional divisor of $\sigma'$ (resp.~of $\sigma$).   
	\end{theorem}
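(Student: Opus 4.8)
The plan is to exploit that the statement is local on the base $Z$: I would first reduce to an affine, ``normalized'' situation and then produce the isomorphism $h$ through universal properties, the only real computation being a pair of explicit chart identifications of ideal sheaves. Indeed, all objects in sight --- $\mathbb P(\mathcal E)$, $\mathbb P(\mathcal F)$, their blow-ups, and the whole diagram of Figure \ref{figure:2} --- commute with restriction to opens of $Z$, and both conclusions (``$h$ is an isomorphism of $Z$-schemes'' and ``the proper transform is the exceptional divisor'') are local on $Z$; for the applications in this paper $Z$ is a smooth variety, and in general one covers $Z$ by suitable opens. So I would assume $Z=\operatorname{Spec}R$ with $R$ Noetherian local and integral, $A=V(t)$ for a nonzerodivisor $t\in R$, and $\mathcal E\simeq\mathcal O_Z^{\oplus r}$; after localizing further and changing basis (still enough), I may take $\psi\colon\mathcal O_Z^{\oplus r}\to\mathcal O_A^{\oplus l}$ to be projection onto the first $l$ coordinates followed by reduction mod $t$, with $l=\operatorname{rk}\mathcal F$. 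Then $\mathcal E'=\ker\psi$ has basis $(te_1,\dots,te_l,e_{l+1},\dots,e_r)$, so $\mathbb P(\mathcal F)\subset\mathbb P(\mathcal E)=\operatorname{Proj}R[x_1,\dots,x_r]$ is $V(t,x_{l+1},\dots,x_r)$, $\mathbb P(\mathcal F')\subset\mathbb P(\mathcal E')=\operatorname{Proj}R[x_1',\dots,x_r']$ is $V(t,x_1',\dots,x_l')$, and over $U:=Z\setminus A$ the change of basis $x_i'=tx_i$ $(i\le l)$, $x_i'=x_i$ $(i>l)$ is a canonical $Z$-isomorphism $\mathbb P(\mathcal E)_U\simeq\mathbb P(\mathcal E')_U$, giving a birational map $\mathbb P(\mathcal E)\dashrightarrow\mathbb P(\mathcal E')$ with indeterminacy locus exactly $\mathbb P(\mathcal F)$ (and inverse with indeterminacy locus exactly $\mathbb P(\mathcal F')$).

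\emph{Constructing $q\colon X\to\mathbb P(\mathcal E')$ and $h$.} The key observation is that the composite of the inclusion $j\colon\mathcal E'\hookrightarrow\mathcal E$ with the tautological quotient,
\[
\pi^*\mathcal E'\xrightarrow{\ \pi^*j\ }\pi^*\mathcal E\twoheadrightarrow\mathcal O_{\mathbb P(\mathcal E)}(1),
\]
has image exactly $\mathcal I_{\mathbb P(\mathcal F)}\otimes\mathcal O_{\mathbb P(\mathcal E)}(1)$ --- a short chart check (on $\{x_i\neq0\}$ with $i\le l$ the image is $(t,\,tx_1/x_i,\dots,tx_l/x_i,\,x_{l+1}/x_i,\dots,x_r/x_i)=(t,x_{l+1}/x_i,\dots,x_r/x_i)=\mathcal I_{\mathbb P(\mathcal F)}$, and on the other charts both sides are the unit ideal). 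Hence, pulling back along the blow-up $\sigma\colon X\to\mathbb P(\mathcal E)$ of $\mathbb P(\mathcal F)$, where $\mathcal I_{\mathbb P(\mathcal F)}\cdot\mathcal O_X=\mathcal O_X(-E)$, I get a \emph{surjection} $\phi^*\mathcal E'\twoheadrightarrow\mathcal L':=\sigma^*\mathcal O_{\mathbb P(\mathcal E)}(1)\otimes\mathcal O_X(-E)$ (with $\phi=\pi\circ\sigma$); by the universal property of $\mathbb P(\mathcal E')=\mathbb P_Z(\mathcal E')$ (Grothendieck's convention, the one used for sections throughout the paper) this yields a $Z$-morphism $q\colon X\to\mathbb P(\mathcal E')$ with $q^*\mathcal O_{\mathbb P(\mathcal E')}(1)=\mathcal L'$, inducing the birational map above over $U$. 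A parallel chart computation gives
\[
q^{-1}\mathcal I_{\mathbb P(\mathcal F')}\cdot\mathcal O_X=\mathcal O_X(-\widehat E),
\]
where $\widehat E$ is the proper transform under $\sigma$ of $\pi^{-1}(A)=V(t)$ (equivalently $\sigma^{-1}(\pi^{-1}(A))-E$). In particular this sheaf is invertible, so the universal property of the blow-up $\sigma'\colon X'\to\mathbb P(\mathcal E')$ of $\mathbb P(\mathcal F')$ produces a unique $Z$-morphism $h\colon X\to X'$ with $\sigma'\circ h=q$.

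\emph{Symmetry, isomorphism, and the exceptional divisors.} Figure \ref{figure:2} is symmetric: its leftmost column gives $\operatorname{elm}_Z(\mathcal E',\mathcal F')=\mathcal E(-A)$ and identifies $\ker(\mathcal E'|_A\to\mathcal F')$ with $\mathcal F$, while $\mathbb P\big(\mathcal E(-A)\big)=\mathbb P(\mathcal E)$ canonically. Repeating the previous step for the datum $(\mathcal E',\psi')$ I would get $q'\colon X'\to\mathbb P(\mathcal E)$ with $(q')^{-1}\mathcal I_{\mathbb P(\mathcal F)}\cdot\mathcal O_{X'}$ invertible, hence a unique $Z$-morphism $h'\colon X'\to X$ with $\sigma\circ h'=q'$. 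Since $h'\circ h$ and $h\circ h'$ restrict to the identity over the dense open $\phi^{-1}(U)$ (there $\sigma,\sigma',q,q'$ are the canonical identifications) and $X$, $X'$ are integral and separated, $h$ and $h'$ are mutually inverse, so $h$ is an isomorphism of $Z$-schemes. For the last assertion, writing $E'$ for the exceptional divisor of $\sigma'$, I would compute
\[
h^*\mathcal O_{X'}(-E')=h^*\big((\sigma')^{-1}\mathcal I_{\mathbb P(\mathcal F')}\cdot\mathcal O_{X'}\big)=q^{-1}\mathcal I_{\mathbb P(\mathcal F')}\cdot\mathcal O_X=\mathcal O_X(-\widehat E),
\]
so $h$ sends the proper transform $\widehat E$ of $\pi^{-1}(A)$ onto $E'$; by the symmetric argument it sends $E$ onto the proper transform of $(\pi')^{-1}(A)$. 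This is exactly what the theorem claims.

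The main obstacle is the second chart identification, $q^{-1}\mathcal I_{\mathbb P(\mathcal F')}\cdot\mathcal O_X=\mathcal O_X(-\widehat E)$: once $\psi$ is in standard form it is a short computation, but it forces one first to normalize $\psi$ (hence the reduction to an affine base) and then to match, chart by chart, the affine charts of the blow-up of $\mathbb P(\mathcal F)$ against the standard charts of $\mathbb P(\mathcal E')$ --- the only place where genuine bookkeeping enters; the identification of the image of $\pi^*\mathcal E'\to\mathcal O_{\mathbb P(\mathcal E)}(1)$ with $\mathcal I_{\mathbb P(\mathcal F)}(1)$, which drives the whole argument, is comparatively immediate.
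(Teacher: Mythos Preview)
Your argument is correct. The paper does not give a full proof of this theorem (it is cited from \cite{Mar73}) but only sketches the idea in one sentence: there are natural embeddings $i\colon X\hookrightarrow\mathcal Q$ and $i'\colon X'\hookrightarrow\mathcal Q$ into the fiber product $\mathcal Q=\mathbb P(\mathcal E)\times_Z\mathbb P(\mathcal E')$, whose images coincide. Your approach is in fact a detailed implementation of exactly this idea: the morphism $q\colon X\to\mathbb P(\mathcal E')$ you construct via the surjection $\phi^*\mathcal E'\twoheadrightarrow\sigma^*\mathcal O(1)\otimes\mathcal O_X(-E)$ is precisely the second component of the embedding $i=(\sigma,q)\colon X\hookrightarrow\mathcal Q$, and your symmetric construction of $q'$ gives $i'$. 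So the two approaches are not genuinely different; you have simply unpacked the paper's hint into an explicit proof, with the chart computations (image of $\pi^*\mathcal E'\to\mathcal O_{\mathbb P(\mathcal E)}(1)$ equals $\mathcal I_{\mathbb P(\mathcal F)}(1)$, and $q^{-1}\mathcal I_{\mathbb P(\mathcal F')}\cdot\mathcal O_X=\mathcal O_X(-\widehat E)$) supplying the content that the fiber-product formulation leaves implicit.
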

	
	\begin{figure}[H]
		\begin{center}
			\begin{tikzcd}
				X\ar[d,"\sigma"']\ar[rr,"\sim","h"']&&X'\ar[d,"\sigma'"]\\\mathbb{P}(\mathcal{E})\ar[dr,"\pi"']\ar[rr,dashed]&& \mathbb{P}(\mathcal{E}')\ar[dl,"\pi'"]\\
				&Z&			
			\end{tikzcd}
		\end{center}
	\end{figure}
	The birational map $\sigma'\circ h\circ  \sigma^{-1}:\mathbb{P}(\mathcal{E})\to\mathbb{P}(\mathcal{E}')$ is called \emph{(geometric) Maruyama's elementary transformation} of $\mathbb{P}(\mathcal{E})$ along $\mathbb{P}(\mathcal{F})$.   
	
	Call $Y:=\mathbb{P}(\mathcal{E})$ and $Y':=\mathbb{P}(\mathcal{E}')$. Consider $\mathcal{Q}:=Y\underset{Z}{\times}Y'$ and let $p:\mathcal{Q}\to Y$ and $p':\mathcal{Q}\to Y'$ be the projections onto its factors. Then we have canonical isomorphisms 
	\begin{align*}
		\mathbb{P}_{Y}(\pi^{*}\mathcal{E'})\simeq\mathcal{Q}\simeq\mathbb{P}_{Y'}({\pi'}^{*}\mathcal{E})
	\end{align*}
	so that the following diagram commutes:
	\begin{figure}[H]
		\begin{center}
			\begin{tikzcd}
				\mathcal{Q}\ar[r,"p'"]\ar[d,"p"']&\mathbb{P}(\mathcal{E}')\ar[d,"\pi'"]\\
				\mathbb{P}(\mathcal{E})\ar[r,"\pi"']&Z
			\end{tikzcd}
		\end{center}
	\end{figure}
	
	The idea of the proof of Theorem \ref{MaruyamaThm} is based on the fact that there are natural embeddings $i:X\hookrightarrow \mathcal{Q}$ and $i':X'\hookrightarrow \mathcal{Q}$ whose images coincide in $\mathcal{Q}$. 
	
	\begin{remark}
		Let $q:\mathcal{Q}\to Z$ be the composition $p\circ\pi=p'\circ\pi'$. If $\mathcal{E}$ is a rank 2 vector bundle on $Z$ and $\mathcal{F}$ is a line bundle on $A$, then $X$ is embedded in $\mathcal{Q}$ as a codimension 1 subvariety, and \[X\sim p^{*}\mathcal{O}_{Y}(1)+(p')^{*}\mathcal{O}_{Y'}(1)+q^{*}(A-\det\mathcal{E})\sim p^{*}\mathcal{O}_{Y}(1)+(p')^{*}\mathcal{O}_{Y'}(1)-q^{*}\det\mathcal{E}'\] as a Cartier divisor in $\mathcal{Q}$.
	\end{remark}
	\subsection{Relation to Construction A}\label{MaruyamaA}
	In the setting of Section \ref{Maruyama}, take $Z$ a smooth Fano variety, $D_1$ and $D_2$ two Cartier divisors in $Z$ such that $A\sim D_1+D_2$ is the class of a smooth irreducible hypersurface. Consider  $\mathcal{E}=\mathcal{O}_Z\oplus\mathcal{O}_Z(D_1)$, $\mathcal{F}=\mathcal{O}_A(D_1)$ and $\psi: \mathcal{E}\to \mathcal{F}$ the surjection given by the projection onto the second factor and the restriction to $A$. The kernel of this morphism of $\mathcal{O}_Z$-modules is $\mathcal{E}'=\mathcal{O}_Z\oplus\mathcal{O}_Z(-D_2)$. Therefore, in this case we have $\text{elm}_Z(\mathcal{O}_Z\oplus\mathcal{O}_Z(D_1),A)=\mathcal{O}_Z\oplus\mathcal{O}_Z(-D_2)$.
	
	Call $Y_i=\mathbb{P}(\mathcal{O}_Z\oplus\mathcal{O}_Z(D_i))$, $i=1,2$.
	Observe that in this setting, the first row of the commutative diagram in Figure \ref{figure:2} is a short exact sequence that defines $\mathbb{P}(\mathcal{F})$ as a section of the $\mathbb{P}^1$-bundle $\pi|_{\mathbb{P}_A(\mathcal{E}|_A)}\colon \mathbb{P}_A(\mathcal{E}|_A)\to A$. That sequence is nothing but the restriction to $A$ of the short exact sequence representing the section $\hat{G}_{Y_1}=c_1(\mathcal{O}_{Y_1}(1))$ in $Y_1$:
	\begin{align*}
		0\to \mathcal{O}_Z\to\mathcal{O}_Z\oplus\mathcal{O}_Z(D_1)\to\mathcal{O}_Z(D_1)\to 0.
	\end{align*}
	This  means that $\mathbb{P}(\mathcal{F})$ coincides with the complete intersection of $\pi^{-1}(A)=\mathbb{P}_A(\mathcal{E}|_A)$ and $\hat{G}_{Y_1}$ in $Y_1$. By an analogous argument on $\mathbb{P}(\mathcal{F}')$ in $Y_2\simeq\mathbb{P}(\mathcal{E}')$, it is clear that Theorem \ref{MaruyamaThm} applied  to this setting yields  ${\mathscr{C}_A(Z;A,D_1)\simeq\mathscr{C}_A(Z;A,D_2)}$, as we mentioned in Section \ref{SettingA}.  
	
	\subsection{Relation to Construction B}\label{MaruyamaB}
	We now work in the setting of Construction B (see Section \ref{SettingBCenter}). Consider the surjection $\psi:\mathcal{E}\twoheadrightarrow \mathcal{L}|_A$ given by the composition of the surjection $\mathcal{E}\twoheadrightarrow\mathcal{L}$ defining the section $S_Y$, and the restriction to $A$. Then we have the exact commutative diagram in Figure \ref{MaruyamaBDiagram}. 
	\begin{figure}[H]
		\begin{center}
			\begin{tikzcd}[sep=small]
				& 0 & 0 & &\\
				0\ar[r]&\mathcal{F}'\ar[u]\ar[r]& \mathcal{E}|_{A}\ar[r]\ar[u]&\mathcal{L}|_A\ar[r]&0\\
				0\ar[r]&\mathcal{E}'\ar[u,"\psi'"]\ar[r]&\mathcal{E}\ar[u]\ar[r,"\psi"]&\mathcal{L}|_A\ar[r]\ar[u,equal]&0\\
				&\mathcal{E}(-A)\ar[u]\ar[r,equal]&\mathcal{E}(-A)\ar[u]&&\\
				&0\ar[u]&0\ar[u]&&
			\end{tikzcd}
		\end{center}
		\caption{Maruyama's elementary transformation in the setting of Construction B.}
		\label{MaruyamaBDiagram}
	\end{figure}
	Let $\mathcal{E}'=\text{elm}_Z(\mathcal{E},\mathcal{L}|_A)
	$. Observe that $\mathbb{P}(\mathcal{L}_A)$ coincides with the codimension 2 subvariety $A_Y$ in $Y=\mathbb{P}(\mathcal{E})$. Call $Y':=\mathbb{P}(\mathcal{E}')$. By Theorem \ref{MaruyamaThm}, we have a commutative diagram
	\begin{figure}[H]
		\begin{center}
			\begin{tikzcd}
				X\ar[d,"\sigma"']\ar[rr,"\sim","h"']&&X'\ar[d,"\sigma'"]\\ Y\ar[dr,"\pi"']\ar[rr,dashed]&&  Y'\ar[dl,"\pi'"]\\
				&Z&
				
			\end{tikzcd}
		\end{center}
	\end{figure}\noindent
	where $\sigma$ is the blowing up of $A_Y=\mathbb{P}(\mathcal{L}_A)$ in $Y$ and $\sigma'$ is the blowing up of $\mathbb{P}(\mathcal{F}')$ in $Y'$. Observe that $A_Y$ is the complete intersection of $\pi^{-1}(A)$ and the section $S_Y$ corresponding to the surjection $\mathcal{E}\twoheadrightarrow\mathcal{L}$, so that $X$ actually coincides with $\mathscr{C}_B(Z;A,\mathcal{E}\twoheadrightarrow\mathcal{L})$,  while $\mathbb{P}(\mathcal{F}')$ might very well not lie in any section of $\pi'$. This explains the asymmetrical nature of the conditions appearing in Theorem \ref{ConstructionB} in contrast to the symmetry of the conditions appearing in Proposition \ref{ConstructionA}. Even the natural embedding of $X$ in $\mathcal{Q}=Y\underset{Z}{\times}Y'$ does not help in finding more satisfying conditions for $X$ to be Fano in the setting of Construction B. 
	
	We now give an example of a Fano 3-fold $X$ arising from a construction of type B, whose factorization given by Theorem \ref{MaruyamaThm} does not describe another construction of type B, i.e.~the blown-up locus in $Y'=\mathbb{P}(\mathcal{E}')$ does not lie in any section of the $\mathbb{P}^1$-bundle $\pi'\colon Y'\to Z$. 
	\begin{example}\label{MaruyamaBexample}
		Let $X$ be the blow up of a curve of degree $(1,1,1)$ in $\mathbb{P}^1\times\mathbb{P}^1\times\mathbb{P}^1$. As we will see in Section \ref{General3folds}, a curve of degree $(1,1,1)$ in $Y:=\mathbb{P}^1\times\mathbb{P}^1\times\mathbb{P}^1$ is contained in a section $S_Y$ of the projection onto the second and third factors $\pi\colon Y\to Z=\mathbb{P}^1\times\mathbb{P}^1$, and such section corresponds to the surjection  ${\mathcal{O}_Z\oplus\mathcal{O}_Z\twoheadrightarrow\mathcal{O}_Z(1,0)}$ (unique up to multiplication by scalars). If we let $A$ be the diagonal in ${Z=\mathbb{P}^1\times\mathbb{P}^1}$, we have $X=\mathscr{C}_B(Z;A,S_Y)$. 
		In order to find the  alternative factorization of the composition $\phi=\sigma\circ\pi$ provided by Maruyama, we need to consider the surjection ${\mathcal{O}_Z\oplus\mathcal{O}_Z\twoheadrightarrow\mathcal{O}_Z(1,0)|_A}$. Then we have \[\mathcal{E}'=\text{elm}_Z(\mathcal{O}_Z\oplus\mathcal{O}_Z,\mathcal{O}_Z(1,0)|_A)=\ker(\mathcal{O}_Z\oplus\mathcal{O}_Z\twoheadrightarrow\mathcal{O}_Z(1,0)|_A)=\mathcal{O}_Z(-1,0)\oplus\mathcal{O}_Z(0,-1)\] and \[\mathcal{F}'=\ker(\mathcal{O}_A\oplus\mathcal{O}_A\twoheadrightarrow\mathcal{O}_Z(1,0)|_A)=\mathcal{O}_Z(-1,0)|_A\simeq\mathcal{O}_{\mathbb{P}^1}(-1).\] Call $Y'=\mathbb{P}(\mathcal{E}')$, $\pi':Y'\to Z$ the projection and $\sigma'$ the blow up of $\mathbb{P}(\mathcal{F}')$ in $Y'$. Theorem \ref{MaruyamaThm} yields $\sigma\circ\pi=\phi=\sigma'\circ\pi'$.
		\begin{figure}[H]
			\begin{center}
				\begin{tikzcd}
					X\ar[r,"\sigma'"]\ar[d,"\sigma"']\ar[dr, "\phi"]&Y'\ar[d,"\pi'"]\\
					Y\ar[r,"\pi"']&Z 
					
				\end{tikzcd}
			\end{center}
		\end{figure}
		The blown up locus $A_{Y'}=\mathbb{P}(\mathcal{F}')$ in $Y'$ does not lie in any section of $\pi'$. Indeed, suppose there is a section $S_{Y'}$ that contains $A_{Y'}$. We know that $A_{Y'}$ is disjoint from the section $\sigma'(S)$, where $S$ is the strict transform of $S_Y$ in $X$. If $\sigma'(S)\cap S_{Y'}=\emptyset$, then the composition $\sigma'\circ\pi'$ would yield a construction of type A for $X$, and $X=\mathscr{C}_A(Z;A,D)$ with $\mathcal{O}(D)\simeq\mathcal{N}_{S_{Y'}/Y'}$. However $X$ does not appear in the list of Fano 3-folds arising from Construction A obtained in Section \ref{Standard3folds}. This implies that $\sigma'(S)\cap S_{Y'}\ne\emptyset$, and since $A$ is ample on $Z$, this yields $A_{Y'}\cap \sigma'(S)\ne\emptyset$, which is a contradiction.    
		
	\end{example}   
	\begin{remark}
		Consider the following Fano 3-folds:
		\begin{itemize}
			\item $X_1$= Blow up of a $(1,1,2)$-curve in $\mathbb{P}^1\times\mathbb{P}^1\times\mathbb{P}^1$. 
			\item $X_2$= Blow up of a $(1,1,3)$-curve in
			$\mathbb{P}^1\times\mathbb{P}^1\times\mathbb{P}^1$. 
		\end{itemize}
		If we consider their descriptions as varieties arising from Construction B (see Section \ref{General3folds}), it is clear that their factorizations through the construction given by Maruyama do not yield another construction of type B. This can be seen with similar computations as in Example \ref{MaruyamaBexample}. 
	\end{remark}
	\section{Recursion}\label{Recursion}
	In this section, we study varieties obtained by performing two successive constructions of type A.
	\subsection{Setting}\label{SettingRecursion}
	Consider a smooth variety $V=\mathscr{C}_A(X;B,L)$. Call $W:=\mathbb{P}_X(\mathcal{O}_X\oplus\mathcal{O}_X(L))$, $\rho: W\to X$ the projection, and $H_W$, $\hat{H}_W$ the two  disjoint sections of $\rho$, $\hat{H}_W$ being the positive one (as described in Section \ref{SettingA}). Call ${B_W:=\rho^{-1}(B)\cap \hat{H}_{W}}$, ${\tau:V\to W}$ the blow up of $B_W$ in $W$, and $\psi:=\rho\circ\tau$.
	
	Suppose that $X$ also arises from Construction A, i.e.~$X=\mathscr{C}_A(Z;A,D)$, with $Z$ Fano.  Call $Y:=\mathbb{P}_Z(\mathcal{O}_Z\oplus\mathcal{O}_Z(D))$, $\pi: Y\to Z$ the projection, and $G_Y$, $\hat{G}_Y$ the two disjoint sections of $\pi$, $\hat{G}_Y$ being the positive one. Call $A_Y:=(\pi)^{-1}(A)\cap \hat{G}_{Y}$, ${\sigma:X\to Y}$ the blow up of $A_Y$ in $Y$, and $\phi:=\pi\circ\sigma$. We are therefore in the presence of a tower like the one in Figure \ref{DoubleTower}.
	\begin{figure}[H]
		\begin{center}
			\begin{tikzcd}[sep=small]
				V\arrow{d}{\tau}\\
				W=\mathbb{P}_{X}(\mathcal{O}\oplus\mathcal{O}(L))\arrow{d}{\rho}&\supset& B_{W}:=\rho^{-1}(B)\cap \hat{H}_{W}\\
				X\arrow{d}{\sigma}&\supset &B\text{, }L\\
				Y=\mathbb{P}_{Z}(\mathcal{O}\oplus\mathcal{O}(D))\arrow{d}{\pi}&\supset& A_{Y}:=(\pi)^{-1}(A)\cap \hat{G}_{Y}\\
				Z&\supset& A, D
			\end{tikzcd}  
		\end{center} 
		\caption{Iteration of two constructions of type A.} 
		\label{DoubleTower}
	\end{figure}
	Our goal is to find conditions for $V$ to be Fano that can be tested on $Z$. This will be particularly useful in Section \ref{Case2}. In order to do so, we first establish some preliminary results.
	
	One important consequence of Remark \ref{Degree1curve} is the following lemma, which holds more generally when $X$ arises from Construction B.
	\begin{lemma}\label{A',D'}
		Let $V=\mathscr{C}_A(X;B,L)$ be a smooth Fano variety and assume that $X=\mathscr{C}_B(Z;A,S_Y)$. Then there exist $A'$ and  $D'$ in $Z$ such that $B=\phi^{*}A'$ and $L=\phi^{*}D'$. 
	\end{lemma}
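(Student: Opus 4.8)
The plan is to analyze how the conic bundle $\phi\colon X\to Z$ interacts with the second construction $V=\mathscr{C}_A(X;B,L)$, and to show that the divisors $B$ and $L$ on $X$ must be pulled back from $Z$ via $\phi$. Since $V$ is Fano and arises from Construction A on $X$, Proposition \ref{ConstructionA} tells us that $-K_X-L$ and $-K_X-(B-L)$ are both ample on $X$; in particular $-K_X$ is ample, so $X$ is Fano, and moreover $X=\mathscr{C}_B(Z;A,S_Y)$, so (by the necessary direction of Theorem \ref{ConstructionB} together with Lemma \ref{deltaLemmaB}) we know $Z$ is Fano and $X$ carries the conic bundle $\phi$ with $\rho_X-\rho_Z=2$ whose fiber cone $\NE(\phi)$ is spanned by the two extremal classes $e$ and $\hat e$ described in Remark \ref{BirationalGeometryB}. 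A divisor on $X$ is numerically (hence, since $X$ is Fano with torsion-free Picard group, linearly) a pullback from $Z$ precisely when it has zero intersection with both $e$ and $\hat e$. So the whole statement reduces to showing $B\cdot e=B\cdot\hat e=0$ and $L\cdot e=L\cdot\hat e=0$.

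To get this I would invoke Remark \ref{Degree1curve} (which is explicitly flagged as the source): the curves in the classes $e$ and $\hat e$ are irreducible curves in $X$ with $-K_X\cdot e=-K_X\cdot\hat e=1$, i.e. anticanonical degree $1$ curves on the Fano variety $X$. Applying the constraints coming from $V$ being Fano — that is, the inequalities $0<(-K_X-L)\cdot C$ and $0<(-K_X-(B-L))\cdot C$ for such a curve $C$, exactly as in Remark \ref{Degree1curve} with the roles $(Z,A,D)\rightsquigarrow(X,B,L)$ — one obtains $B\cdot C\le L\cdot C\le 0$ for $C$ in the class $e$ and likewise for $C$ in the class $\hat e$. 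Now I also want the reverse inequalities. These should come from the geometry of $\phi$: $B$ and $L$ restricted to a general fiber of $\phi$, or the positivity of the strict transforms $E$, $\hat E$, $S$ and the relation between $-K_X$ and $\phi^{*}(-K_Z)$, force $B\cdot e\ge 0$ and $L\cdot e\ge 0$ (and similarly for $\hat e$). The cleanest route is probably: $B\sim\phi^{*}(\text{something})$ iff $B$ is trivial on the fibers of $\phi$, and an irreducible effective divisor can be negative on at most finitely many curve classes; combined with $B\cdot e\le 0$, $B\cdot\hat e\le 0$ and the fact that $e,\hat e$ generate a full-dimensional subcone on which $\phi^{*}\mathrm{Pic}(Z)$ vanishes, a short linear-algebra argument in $N_1(X)$ pins down $B\cdot e=B\cdot\hat e=0$, and the same for $L$.

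Concretely, here is the order I would carry it out. First, record that $V$ Fano $\Rightarrow$ $X$ Fano and $-K_X-L$, $-K_X-B+L$ ample (Proposition \ref{ConstructionA}). Second, recall from Remark \ref{BirationalGeometryB} the classes $e,\hat e$ and the intersection table; note $\phi^{*}N^1(Z)=\{D'\in N^1(X): D'\cdot e=D'\cdot\hat e=0\}$ since $\phi$ is a conic bundle contracting exactly $\NE(\phi)=\mathbb{R}_{\ge0}e+\mathbb{R}_{\ge0}\hat e$ and $\rho_X-\rho_Z=2$. Third, pick irreducible rational curves $C_e\in e$, $C_{\hat e}\in\hat e$; by the table $-K_X\cdot C_e=-K_X\cdot C_{\hat e}=1$, and the argument of Remark \ref{Degree1curve} (applied to $V=\mathscr{C}_A(X;B,L)$) gives $B\cdot C_e\le L\cdot C_e\le 0$ and $B\cdot C_{\hat e}\le L\cdot C_{\hat e}\le 0$. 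Fourth, obtain the opposite inequalities: I expect this is where the real content is, and I would argue that $B=\psi^{*}(\cdots)$-type positivity, or rather the effectivity of $B$ together with the observation that $B$ cannot meet a covering family of curves negatively, forces $B\cdot C_e\ge 0$, $B\cdot C_{\hat e}\ge 0$ — one can also run Remark \ref{Degree1curve} in the mirror construction $V=\mathscr{C}_A(X;B,B-L)$ to handle $L$ symmetrically. Fifth, conclude $B\cdot e=B\cdot\hat e=0$ and $L\cdot e=L\cdot\hat e=0$, hence $B\in\phi^{*}N^1(Z)$ and $L\in\phi^{*}N^1(Z)$; since $X$ is Fano its Picard group is torsion-free and $\phi_*\mathcal O_X=\mathcal O_Z$, so numerical pullback lifts to genuine divisor classes $A',D'$ on $Z$ with $B=\phi^{*}A'=\psi^{*}A'$ and $L=\phi^{*}D'=\psi^{*}D'$. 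The main obstacle, as indicated, is step four — upgrading the one-sided bounds $B\cdot C\le 0$, $L\cdot C\le 0$ to equalities — and I anticipate the honest way to do it is to exploit both Construction-A presentations of $V$ (via $L$ and via $B-L$) so that the Remark \ref{Degree1curve} dichotomy pins $C_e$ and $C_{\hat e}$ into the ``$C\cap A=\emptyset$ or $C\subset A$'' case, where $B\cdot C$ and $L\cdot C$ can be computed directly and seen to vanish.
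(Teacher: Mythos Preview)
Your overall strategy coincides with the paper's: reduce to showing $B\cdot e=B\cdot\hat e=L\cdot e=L\cdot\hat e=0$ by applying Remark~\ref{Degree1curve} to the anticanonical-degree-one classes $e,\hat e$, then conclude $B,L\in\NE(\phi)^{\perp}=\phi^{*}N^1(Z)$. The gap you correctly flag in step four is genuine, and your suggested fixes (covering-family positivity, or rerunning Remark~\ref{Degree1curve} in the mirror presentation $\mathscr{C}_A(X;B,B-L)$) do not close it: the mirror presentation yields the \emph{same} inequalities, and ``$B$ cannot be negative on a covering family'' is precisely what needs to be made concrete.

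The paper's resolution is short and uses the intersection table of Remark~\ref{BirationalGeometryB}. Remark~\ref{Degree1curve} applied to $C_e$ gives the dichotomy: either $B\cdot e=L\cdot e=0$, or $B\cdot e<0$. In the latter case every irreducible curve in the class $e$ --- i.e.\ every fibre of $\sigma$ --- lies in $B$, so $E\subseteq B$; since $E$ and $B$ are both prime divisors, $B=E$. But then $B\cdot\hat e=E\cdot\hat e=1>0$, contradicting the bound $B\cdot\hat e\le 0$ obtained from Remark~\ref{Degree1curve} applied to $C_{\hat e}$. Hence $B\cdot e=L\cdot e=0$; swapping $e\leftrightarrow\hat e$ (and $E\leftrightarrow\hat E$) gives $B\cdot\hat e=L\cdot\hat e=0$. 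That is the missing idea: play the two inequalities for $e$ and $\hat e$ against each other via the specific entry $E\cdot\hat e=1$ (resp.\ $\hat E\cdot e=1$), rather than trying to extract a second independent bound on the same class.
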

	\begin{proof}
		For $X$ we use the same notation as in Section \ref{SettingBCenter}. We know that $\text{Pic}(X)=\mathbb{Z}E\oplus\sigma^{*}\text{Pic}(Y)=\mathbb{Z}E\oplus\mathbb{Z}{\sigma}^{*}S_{Y}\oplus\phi^{*}\text{Pic}(Z)=\mathbb{Z}E\oplus\mathbb{Z}S\oplus{\phi}^{*}\text{Pic}(Z)$. Recall that $\phi|_{E}:E\to A$ and $\phi|_{\hat{E}}:\hat{E}\to A$ are both $\mathbb{P}^1$-bundles. Let $e$ denote the class of a 1-dimensional fiber in $E$ and $\hat{e}$ the class of a one-dimensional fiber in $\hat{E}$.  We know that $-K_X\cdot e=1=-K_X\cdot \hat{e}$ (see Remark \ref{BirationalGeometryB}). Therefore, by Remark \ref{Degree1curve} we have $L\cdot e=B\cdot e=0$ or $B\cdot e<0$ and analogously $L\cdot \hat{e}=B\cdot \hat{e}=0$ or $B\cdot \hat{e}<0$. However if $B\cdot e<0$, then $F\subset B$ for every fiber $F$ of $\sigma$, i.e.~$E\subset B$, which implies $E=B$ since they are both irreducible surfaces in $X$. But then $B\cdot \hat{e}=E\cdot \hat{e}=1$, which is a contradiction. Therefore $L\cdot e=B\cdot e=0$ and similarly it can be proved that $L\cdot \hat{e}=B\cdot\hat{e}=0$  and $B\ne \hat{E}$, so that $B,L\in \NE(\phi)^{\perp}$.
		Thus there exists an effective Cartier divisor $A'\subset Z$ such that $B=\phi^{*}A'$ and there exists a Cartier divisor $D'\subset Z$ such that $L=\phi^{*}D'$.
	\end{proof}
	\begin{proposition}\label{SwitchProposition}
		Let $Z$ be a smooth Fano variety. Consider $A$, $A'$ two distinct smooth irreducible hypersurfaces and $D$, $D'$ two Cartier divisors in $Z$. Call ${\phi:X=\mathscr{C}_A(Z;A,D)\to Z}$ and \sloppy ${\phi':X'=\mathscr{C}_A(Z;A',D')\to Z}$ the conic bundles induced by Construction A. Then
		\begin{align*}
			\mathscr{C}_A\bigr(\mathscr{C}_A(Z,A,D);{\phi}^{*}A',{\phi}^{*}D'\bigr)\simeq\mathscr{C}_A\bigr(\mathscr{C}_A(Z,A',D');(\phi')^{*}A,(\phi')^{*}D\bigr)
		\end{align*} 
		i.e.~the following diagram commutes:
		
		\begin{equation}\label{DoubleDiagram}
			\begin{tikzcd}
				V\simeq V' \arrow[d,"\psi"']\arrow[dr,"\Phi"]\arrow[r,"\psi'"]& X' \arrow[d, "\phi'"] \\
				X\arrow[r,"\phi"'] & Z 
			\end{tikzcd}
		\end{equation}
		where $\psi:V=\mathscr{C}_A(X;\phi^{*}A',\phi^{*}D')\to X$ and $\psi':V'=\mathscr{C}_A(X';(\phi')^{*}A,(\phi')^{*}D)\to X'$ are the conic bundles induced by Construction A.
	\end{proposition}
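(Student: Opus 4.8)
The plan is to describe $V$ and $V'$ explicitly as blow-ups of a common smooth fourfold-type ambient space built by fibering $Y_Z := Z$ twice, and to identify the two towers with each other by matching the blown-up loci. First I would set up the ``base'' variety $\mathcal{W}$ obtained from $Z$ by first projectivizing $\mathcal{O}_Z\oplus\mathcal{O}_Z(D)$ to get $Y=\mathbb{P}_Z(\mathcal{O}\oplus\mathcal{O}(D))$ with projection $\pi$, and then pulling the second $\mathbb{P}^1$-bundle data back along $\phi=\pi\circ\sigma$ is not quite right since $X$ already involves a blow-up; instead the cleaner route is to work directly with the iterated tower in Figure \ref{DoubleTower} and to commute the two ``independent'' operations. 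Concretely, the construction $\mathscr{C}_A(Z;A,D)$ only modifies $Y$ over the hypersurface $A$, while the pullbacks $\phi^*A'$, $\phi^*D'$ used in the outer construction only depend on $A'$, $D'\subset Z$; symmetrically on the other side. So the content of the statement is that ``blow up over $A$'' and ``build a $\mathbb{P}^1$-bundle-plus-blow-up over $A'$'' are independent operations on $Z$ and can be performed in either order, yielding isomorphic results.

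The key steps, in order, would be: (1) Build the fiber product $\mathcal{Y} := Y \times_Z Y'$ where $Y=\mathbb{P}_Z(\mathcal{O}\oplus\mathcal{O}(D))$ and $Y'=\mathbb{P}_Z(\mathcal{O}\oplus\mathcal{O}(D'))$, with projections $q:\mathcal{Y}\to Y$, $q':\mathcal{Y}\to Y'$ and $\Pi:\mathcal{Y}\to Z$; note $\rho_{\mathcal{Y}}=\rho_Z+2$ and that $\mathcal{Y}=\mathbb{P}_Y(\pi^*(\mathcal{O}\oplus\mathcal{O}(D')))$, etc. (2) Observe that $W=\mathbb{P}_X(\mathcal{O}\oplus\mathcal{O}(L))=\mathbb{P}_X(\mathcal{O}\oplus\mathcal{O}(\phi^*D'))=X\times_Z Y'$, because forming $\mathbb{P}_X$ of a pulled-back bundle is the fiber product; hence $W = X\times_Z Y'$, and since $X$ is itself the blow-up $\sigma:X\to Y$ of $A_Y=\pi^{-1}(A)\cap\hat G_Y$, we get that $W\to \mathcal{Y}=Y\times_Z Y'$ is the blow-up of $A_Y\times_Z Y' = q^{-1}(A_Y)$, a smooth codimension-$2$ center. (3) Identify the center $B_W=\rho^{-1}(B)\cap\hat H_W$ inside $W$: since $B=\phi^*A'=\psi^*$ of the hypersurface $A'$ pulled to $X$... more precisely $B = \sigma^*\pi^*A'$ and $\hat H_W$ is the section of $\rho$ coming from $\hat G_{Y'}\subset Y'$ pulled to $W=X\times_Z Y'$, so $B_W$ is the preimage in $W$ of $(\pi')^{-1}(A')\cap \hat G_{Y'}=A_{Y'}\subset Y'$ under $W\to Y'$, intersected appropriately — i.e.\ $B_W$ corresponds to $Y\times_Z A_{Y'}$, read through the blow-up $W\to\mathcal{Y}$. (4) Conclude that $V=\mathscr{C}_A(X;B,L)$ is the blow-up of $\mathcal{Y}$ along the two disjoint-or-transverse centers $q^{-1}(A_Y)$ and $(q')^{-1}(A_{Y'})$, performed in the order ``$q^{-1}(A_Y)$ first, then the strict transform of $(q')^{-1}(A_{Y'})$''. (5) Run the same analysis on the $V'$ side, getting the blow-up of the same $\mathcal{Y}$ along the same two centers in the opposite order. (6) Invoke the standard fact that blowing up two smooth subvarieties whose scheme-theoretic intersection is smooth (here $q^{-1}(A_Y)\cap (q')^{-1}(A_{Y'}) = \Pi^{-1}(A\cap A')$, intersected with the relevant sections, which is smooth since $A$, $A'$ are distinct smooth hypersurfaces meeting transversally — or handle the general case via the commutativity of blow-ups along subschemes that are ``transverse'' in Li's sense) is independent of the order, giving $V\simeq V'$ over $Z$, and that the resulting map $\Phi$ factors through both $\phi\circ\psi$ and $\phi'\circ\psi'$, which is exactly the commutativity of diagram \eqref{DoubleDiagram}.

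The main obstacle I anticipate is step (6) together with the precise geometry of the two centers inside $\mathcal{Y}$: one must check that $q^{-1}(A_Y)$ and $(q')^{-1}(A_{Y'})$ are genuinely ``compatible'' for order-independent blow-up. They are not disjoint — their images in $Z$ are $A$ and $A'$, which do meet — so one cannot simply cite disjointness; instead one needs transversality of $A$ and $A'$ in $Z$ (which may require choosing $A'$ general in its class, or noting it suffices for the isomorphism type of $V$ which is what the statement asserts) and then the fact that pulling these hypersurfaces up to the two different $\mathbb{P}^1$-bundle directions keeps the centers transverse, so that Fulton--MacPherson / Li-style commutativity of blow-ups applies. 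A cleaner alternative that sidesteps transversality subtleties is to argue locally over $Z$: étale-locally $Z$ is affine space, $A=\{z_1=0\}$, $A'=\{z_2=0\}$, the bundles trivialize, and then the whole picture becomes an explicit toric/monomial computation showing both $V$ and $V'$ equal the same explicit blow-up of $\mathbb{A}^{n-1}\times\mathbb{P}^1\times\mathbb{P}^1$; gluing these local models gives the global isomorphism. I would likely present the fiber-product/blow-up argument as the main line and remark that the local model confirms the center configuration.
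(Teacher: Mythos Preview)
Your approach is essentially correct and, once streamlined, is actually simpler than the route the paper takes; but you are overcomplicating step~(4)--(6) and worrying about a transversality issue that dissolves if you push your own step~(2) one step further. You already observe that $W=\mathbb{P}_X(\mathcal{O}\oplus\mathcal{O}(\phi^*D'))=X\times_Z Y'$. The same reasoning identifies $\hat H_W=X\times_Z \hat G_{Y'}$ and hence $B_W=\rho^{-1}(\phi^{-1}A')\cap\hat H_W=X\times_Z A_{Y'}$. Now $\phi\colon X\to Z$ is flat (smooth source and target, equidimensional fibers), so its base change $X\times_Z Y'\to Y'$ is flat, and blow-up commutes with flat base change:
\[
V=\Bl_{B_W}(W)=\Bl_{X\times_Z A_{Y'}}(X\times_Z Y')\;=\;(X\times_Z Y')\times_{Y'}\Bl_{A_{Y'}}(Y')\;=\;X\times_Z X'.
\]
By symmetry $V'=X'\times_Z X$, so $V\simeq V'$ and the two projections give $\psi,\psi'$ making the square commute. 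No iterated blow-up of $\mathcal Y$, no order-of-blow-up lemma, and no transversality hypothesis on $A\cap A'$ is needed. (The smoothness of $B=\phi^*A'$, which \emph{is} needed for $\mathscr{C}_A(X;B,L)$ to be defined, is already implicit in the statement.)

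The paper's own proof is genuinely different: it never writes $V$ as a fiber product. Instead it works entirely inside $\NE(V)$. It first checks that the relative cone $\NE(\Phi)$ of $\Phi=\phi\circ\psi$ is a $4$-dimensional $K_V$-negative extremal face, then lifts the extremal rays $e,\hat e$ of $\NE(\phi)$ to extremal rays $f',\hat f'$ of $\NE(\Phi)$, verifies via an explicit intersection table that $\NE(\Phi)=\mathbb{R}_{\ge0}f+\mathbb{R}_{\ge0}\hat f+\mathbb{R}_{\ge0}f'+\mathbb{R}_{\ge0}\hat f'$, and finally contracts the face $\mathbb{R}_{\ge0}f'+\mathbb{R}_{\ge0}\hat f'$ to produce $\psi'\colon V\to X'$, identifying the target with $\mathscr{C}_A(Z;A',D')$ by inspecting sections. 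Your fiber-product argument is shorter and more conceptual; the paper's Mori-theoretic argument has the advantage of exhibiting all the extremal rays of $\NE(\Phi)$ and the divisors $F,\hat F,F',\hat F',H,\hat H,H',\hat H'$ explicitly, which is exactly the bookkeeping used later in Section~\ref{doubleConditions} to derive the Fano conditions of Proposition~\ref{DoubleProp}.
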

	\begin{proof}
		In $X=\mathscr{C}_A(Z;A,D)$ consider the exceptional divisors $E$, $\hat{E}$, and the two disjoint sections $G$, $\hat{G}$ as  in Section \ref{SettingA}. In $\NE(X)$ consider the classes $e$ and $\hat{e}$ of the irreducible one-dimensional fibers  in $E$ and $\hat{E}$ respectively, so that ${\NE(\phi)=\mathbb{R}_{\ge0}e+\mathbb{R}_{\ge0}\hat{e}}$. Analogously, in $V=\mathscr{C}_A(X;\phi^{*}A',\phi^{*}D')$ consider the exceptional divisors $F$, $\hat{F}$, and the two disjoint sections $H$, $\hat{H}$. Let $f$ and $\hat{f}$ be the classes of the irreducible one-dimensional fibers in the exceptional divisors $F$ and $\hat{F}$ respectively, so that ${\NE(\psi)=\mathbb{R}_{\ge0}f+\mathbb{R}_{\ge0}\hat{f}}$.
		
		Call $\Phi:= \phi\circ\psi$. Observe that $\NE(\Phi)$ has dimension 4. We now show that $\Phi$ is $K_{V}$-negative, so that $\NE(\Phi)$ is a rational polyhedral extremal face of $\NE(V)$. 
		
		If  $[C]\in \NE(\Phi)$ is the class of an irreducible curve $C$ that is contracted by $\Phi$, then $[C]\in\NE(\Phi)$ and by construction it is already known that $-K_{V}\cdot C>0$. On the other hand, if $C$ is an irreducible curve contracted by $\Phi$ but not by $\psi$, it means that $\psi_{*}C$ is an irreducible component $T$ of a fiber of $\phi$ and its class in $\NE(X)$ lies in ${\NE(\phi)=\mathbb{R}_{\ge0}e+\mathbb{R}_{\ge0}\hat{e}}$. 
		In order to prove that $-K_{V}\cdot C>0$, it suffices to show that $-K_{V}$ is positive on $\psi^{-1}(T)\subset V$. Observe that there are only two possible cases for $T$:
		\begin{enumerate}
			\item $T\subset \phi^{*}A'$ ($\iff \phi(T)\in A'$).
			\item $T\cap  \phi^{*}A'=\emptyset$ ($\iff \phi(T)\notin A'$).
		\end{enumerate}       
		Denote $B:=\phi^{*}A'$, $L:=\phi^{*}D'$, consider the $\mathbb{P}^1$-bundle $\rho:W=\mathbb{P}(\mathcal{O}\oplus\mathcal{O}(L))\to X$ and the blow up $\tau:V\to W$ along the subvariety $B_{W}:=\rho^{-1}(B)\cap \hat{H}_{W}$. 
		
		In Case 1, ${\rho^{-1}(T)=\mathbb{P}_T(\mathcal{O}_T\oplus\mathcal{O}_T(L))=\mathbb{P}^1\times\mathbb{P}^1}$ because $L\cdot T=D'\cdot \phi_{*}T=0$. Call $T_{W}:=\rho^{-1}(T)\cap \hat{H}_{W}\subset  B_{W}$. Then $\psi^{-1}(T)= \text{Bl}_{T_W}(\rho^{-1}(T))$. However $T_W$ is a codimension 1 subvariety of $\rho^{-1}(T)$, hence $\psi^{-1}(T)\simeq \rho^{-1}(T)\simeq \mathbb{P}^1\times\mathbb{P}^1$. Let $\mathbf{l}_1$, $\mathbf{l}_2$ be the embeddings in $\NE(V)$ of the two rulings in $\NE(\psi^{-1}(T))$ such that $\psi_{*}(\mathbf{l}_1)=[T]$ and $\psi_{*}(\mathbf{l}_2)=0$, where $\mathbf{l}_1$ is the class of the transform of $T_W$ in $V$. Then clearly 
		\begin{itemize}
			\item $-K_{V}\cdot \mathbf{l}_1=
			\phi^{*}(-K_{X})\cdot \mathbf{l}_1+H\cdot\mathbf{l}_1+\hat{H}\cdot\mathbf{l}_1= -K_{X}\cdot T+0+L\cdot T=-K_{X}\cdot T >0$
			\item $-K_{V}\cdot \mathbf{l}_2>0$
		\end{itemize}    
		because $-K_{X}$ is positive on $\NE(\phi)$ and $-K_{V}$ is positive on $\NE(\psi)$.
		
		In Case 2, it is easy to see that $\psi^{-1}(T)\simeq \rho^{-1}(T)\simeq \mathbb{P}^1\times\mathbb{P}^1$, and through similar computations as in Case 1, we can conclude that $-K_{V}$ is positive in $\psi^{-1}(T)$.

		This concludes the proof that $\NE(\Phi)$ is a (4-dimensional) closed rational polyhedral extremal face of $\NE(V)$. 
		
		We have $\psi_{*}\NE(\Phi)=\NE(\phi)$ through $\psi$, and faces of $\NE(\phi)$ are in bijection with faces of $\NE(\Phi)$ containing $\NE(\psi)$ (see \cite[\S 2.5]{Cas08}). Consider the extremal ray $\alpha:=\mathbb{R}_{\ge 0}e$ of $\NE(\phi)$, and let $\hat{\alpha}$ be the unique face of $\NE(\Phi)$ containing $\NE(\psi)$ such that $\psi_{*}(\hat{\alpha})=\alpha$. We have  ${\dim\hat{\alpha}=\dim\alpha+\dim\NE(\psi)=3}$. Let $\tilde{\alpha}$ be an extremal ray in $\hat{\alpha}$ not contained in $\NE(\psi)$ such that $\psi_{*}(\tilde{\alpha})=\alpha$.  By definition, there exists an irreducible curve whose class $f'\in\NE(V)$ spans $\tilde{\alpha}$ and $\psi_{*}(f')=e$ in $\NE(\phi)$. Therefore any irreducible representative $T'$ of $f'$ must lie in $\psi^{-1}(T)\simeq\mathbb{P}^1\times\mathbb{P}^1$, for some irreducible representative $T$ of $e$, so that in $\NE(\psi^{-1}(T))$ the class of $T'$ spans the ray generated by the \emph{horizontal} ruling $\mathbf{l_1}$. This is true because of the extremality of $\tilde{\alpha}$. In other words, we can take $T'=\psi^{-1}(T)\cap \hat{H}$ as a curve representing $f'$. Call $F':=\psi^{*}(E)$ and observe that ${F'\cdot f'=E\cdot e=-1}$, ${H\cdot f'=0}$ and ${\hat{H}\cdot f' = \phi^{*}D\cdot e= 0}$. Clearly $F'$ is covered by curves in $f'$, and actually coincides with the locus of $\tilde{\alpha}$. Therefore the elementary contraction $\tau':V\to W'$ associated to $\tilde{\alpha}$ (which exists because $\NE(\Phi)$ is $K_{V}$-negative) is divisorial and has fibers of dimension at most 1, so that it is in fact the blow up of a smooth irreducible codimension 2 subvariety in $W'$, with exceptional divisor $F'$. 
		
		In the same way, we can prove that there exists a class $\hat{f}'$ in $\NE(V)$ that spans an extremal ray and such that $\psi_{*}(\hat{f}')=\hat{e}$, $\hat{F}'\cdot\hat{f}'=-1$ (where $\hat{F}'=\psi^{*}(\hat{E})$), and $H\cdot \hat{f}'=0=\hat{H}\cdot\hat{f}'$. Let $\hat{\tau}':V\to \hat{W}'$ be the blow up induced by the extremal ray spanned by $\hat{f}'$. Observe that $f'$ and $\hat{f}'$ are linearly independent in $\NE(\Phi)$ because $e$ and $\hat{e}$ are linearly independent in $\NE(\phi)$. 
		
		Consider the classes $f$ and $\hat{f}$ in $F$ and $\hat{F}$ respectively. Observe that the following intersection table
		\begin{figure}[H]
			\centering
			\begin{tabular}{|c||c|c|c|c|}
				\hline
				$\cdot$ & $f$ & $\hat{f}$ & $f'$ & $\hat{f}'$\\
				\hline \hline
				$\hat{H}$ & 1 & 0 &0&0\\
				\hline
				$H$ & 0 & 1 & 0&0\\
				\hline
				$\phi^{*}\hat{G}$ & 0 & 0 & 1 & 0\\
				\hline
				$\phi^{*}G$ & 0& 0& 0&1\\
				\hline
			\end{tabular}
		\end{figure}  \noindent
		ensures that $f$, $\hat{f}$, $f'$, $\hat{f}'$ are linearly independent in $\NE(\Phi)$, which has dimension 4, hence every element in $\NE(\Phi)$ can be expressed in a unique way as a linear combination of those four classes. Also, observe that ${F\cdot f'=0=F\cdot \hat{f}'}$ and ${\hat{F}\cdot f'=0=\hat{F}\cdot \hat{f}'}$ as a consequence of the fact that $B\cdot e=0=B\cdot\hat{e}$ in $X$.    
		
		We already know that the 2-dimensional face $\mathbb{R}_{\ge 0}f+\mathbb{R}_{\ge0}\hat{f}$ in $\NE(\Phi)$ is extremal and coincides with $\NE(\psi)$. 
		We are now going to prove that $\NE(\Phi)=\mathbb{R}_{\ge0}f+\mathbb{R}_{\ge0}\hat{f}+\mathbb{R}_{\ge 0}f'+\mathbb{R}_{\ge 0}\hat{f}'$.
		
		Suppose $a\in\NE(\Phi)$. Then there exist coefficients $a_1,a_2,a_3,a_4\in\mathbb{R}$ such that 
		\begin{equation*}
			a=a_1f+a_2\hat{f}+a_3f'+a_4\hat{f}'
		\end{equation*}
		and we show that $a_i\ge 0$ for all $i=1,..,4$. 
		
		Observe that $\psi_{*}(a)=a_3e+a_4\hat{e}$ belongs to $\psi_{*}\NE(\Phi)=\NE(\phi)$, hence $a_3,a_4\ge 0$. Suppose $a_1<0$. Then $\hat{H}\cdot a=a_1<0$ implies that every representative of $a$ lies in $\hat{H}$, hence $0=H\cdot a=a_2$. But then $\hat{F}\cdot a=a_1-a_2=a_1<0$, so that every representative of $a$ lies in $\hat{H}\cap \hat{F}$, which is empty by construction. This proves that $a_1\ge 0$, and similarly it can be proved that $a_2\ge 0$. 
		
		In particular, the 2-dimensional face $\mathbb{R}_{\ge 0}f'+\mathbb{R}_{\ge0}\hat{f}'$ in $\NE(\Phi)$ is extremal. This yields the existence of a contraction $\psi':V\to X'$ such that ${\NE(\psi')=\mathbb{R}_{\ge 0}f'+\mathbb{R}_{\ge0}\hat{f}'}$, which by rigidity implies the existence of another contraction $\phi':X'\to Z$ that makes the following diagram commute.
		
		\begin{figure}[H]
			\begin{center}
				\begin{tikzcd}
					V  \arrow[d,"\psi"']\arrow[r,"\psi'"]\ar[dr,"\Phi"]&X' \arrow[d, dashed, "\phi'"] \\
					X\arrow[r,"\phi"'] &Z
				\end{tikzcd}
			\end{center}
		\end{figure}
		Observe that $\hat{H}':= \psi^{*}(\hat{G})$ and $H':= \psi^{*}(G)$ are two disjoint sections of $\psi'$, so that $\psi'$ realizes $V$ as a result of  Construction A, with $V=\mathscr{C}_A(X',(\phi')^{*}A,(\phi')^{*}D)$. Moreover, since $X'$ is a copy of $H'$ (and $\hat{H}'$), it can be seen that $\phi':X'\to Z$ yields $X'=\mathscr{C}_A(Z;A',D')$, where $G':={\psi'}_{*}(H'\cap H)$ and $\hat{G}':={\psi'}_{*}(H'\cap \hat{H})$ are two disjoint sections.  
	\end{proof}
	
	\subsection{Conditions for being Fano}\label{doubleConditions}
	Consider a smooth variety $V=\mathscr{C}_A(X;B,L)$ and call $L_1:=L$, $L_2:=B-L_1$. Suppose that $X$ also arises from Construction A, so that $X=\mathscr{C}_A(Z;A,D)$, with $D_1:=D$, $D_2:=A-D$. We assume that $Z$ is Fano.
	By Lemma \ref{A',D'} we know that there exist two divisors $A'$ and $D'$ in $Z$ such that $B=\phi^{*}A'$, $L=\phi^{*}D'$. Call $D'_1:= D'$ and $D'_2:=A'-D'$. Let ${\phi':X'=\mathscr{C}_A(Z;A',D')\to Z}$ and \sloppy ${\psi':V'=\mathscr{C}_A(Z;(\phi')^{*}A,(\phi')^{*}D)\to Z}$ be the conic bundles induced by Construction A. By Proposition \ref{SwitchProposition}, $V\simeq V'$ and the diagram (\ref{DoubleDiagram}) commutes.
	
	We want to find conditions for $V$ to be Fano that can be tested on $Z$. By Proposition \ref{ConstructionA} and \cite[Proposition 4.3]{Wis91} we have that $V$ is Fano if and only if $-K_X-L_i$ is ample on $X$ for $i=1,2$ and $X$ is Fano. Observe that the equality $-K_{X}\sim\phi^{*}(-K_Z)+ G+\hat{G}$ implies that
	\begin{align*}
		-K_X-L_i\sim\phi^{*}(-K_Z-D_i')+G+\hat{G}\;\;\; \text{  for }i=1,2.
	\end{align*}
	Therefore, if we assume that $-K_Z-D_i'$ is ample for a fixed $i\in\{1,2\}$, following similar steps as in the proof of Proposition \ref{ConstructionA} we can show that
	\begin{align*}
		-K_X-L_i \text{ is ample on X} \iff -K_Z-D_i'-D_j \text{ are ample on $Z$ for }j=1,2.
	\end{align*}
	
	In conclusion, we obtain the following proposition:
	\begin{proposition}\label{DoubleProp}
		$V$ is Fano if and only if $-K_Z-D_i'-D_j \text{ are ample for all }i,j\in\{1,2\}$ and $-K_Z-D_i'$ are ample for $i=1,2$ (or equivalently $-K_Z-D_j$ are ample for $j=1,2$ ).
	\end{proposition}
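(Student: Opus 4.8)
The plan is to reduce ``$V$ is Fano'' to ampleness tests on $Z$ by telescoping twice, exactly as in the proof of Proposition~\ref{ConstructionA}. Since $V=\mathscr{C}_A(X;B,L)$ and the conic bundle $V\to X$ has no nonreduced fibers, \cite[Proposition~4.3]{Wis91} gives that $V$ Fano $\Rightarrow$ $X$ Fano, and Proposition~\ref{ConstructionA} applied with base $X$ then says: once $X$ is Fano, $V$ is Fano iff $-K_X-L_1$ and $-K_X-L_2=-K_X-(B-L)$ are ample on $X$. Applying Proposition~\ref{ConstructionA} to $X=\mathscr{C}_A(Z;A,D)$ turns ``$X$ Fano'' into ``$-K_Z-D_1$ and $-K_Z-D_2$ ample on $Z$''. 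So the only thing left is to translate ``$-K_X-L_i$ ample on $X$'' into a condition on $Z$.

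For this I would use Lemma~\ref{A',D'} to write $L_i=\phi^{*}D_i'$, so that Remark~\ref{Remark-K_X} produces the key identity
\[
-K_X-L_i\sim\phi^{*}(-K_Z-D_i')+G+\hat{G},
\]
and then rerun the proof of Proposition~\ref{ConstructionA} with $-K_X$ replaced by $-K_X-L_i$ and $-K_Z$ by $-K_Z-D_i'$. Concretely: adjunction together with $\mathcal{N}_{G/X}\simeq\mathcal{O}(-D_1)$, $\mathcal{N}_{\hat{G}/X}\simeq\mathcal{O}(-D_2)$ and the isomorphisms $\phi|_{G},\phi|_{\hat{G}}$ give $(-K_X-L_i)|_{G}\sim -K_Z-D_i'-D_1$ and $(-K_X-L_i)|_{\hat{G}}\sim -K_Z-D_i'-D_2$; curves contracted by $\phi$ span $\NE(\phi)=\mathbb{R}_{\ge0}e+\mathbb{R}_{\ge0}\hat{e}$, on which $-K_X-L_i$ takes value $1$ on each generator (from Table~\ref{table:1} together with $L_i\cdot e=L_i\cdot\hat{e}=0$); a curve $\Gamma\not\subset G\cup\hat{G}$ satisfies $(-K_X-L_i)\cdot\Gamma=\phi^{*}(-K_Z-D_i')\cdot\Gamma+G\cdot\Gamma+\hat{G}\cdot\Gamma\ge0$, strictly positive whenever $\phi_{*}\Gamma\neq 0$ because $-K_Z-D_i'$ is ample; and bigness follows since $G_Y+\pi^{*}(-K_Z-D_i')$ is the tautological divisor of $\mathbb{P}_Z\bigl(\mathcal{O}(-K_Z-D_i')\oplus\mathcal{O}(-K_Z-D_i'-D_1)\bigr)$, hence ample, so $m\sigma^{*}\bigl(G_Y+\pi^{*}(-K_Z-D_i')\bigr)-cE=m\phi^{*}(-K_Z-D_i')+mG-cE$ is ample for suitable $m,c$, and adding the effective divisor $m\hat{G}+cE$ recovers $-m(K_X+L_i)$. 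The base-point-free theorem then yields: \emph{assuming $-K_Z-D_i'$ ample}, one has $-K_X-L_i$ ample $\iff$ $-K_Z-D_i'-D_1$ and $-K_Z-D_i'-D_2$ ample; and these two conditions on $Z$ are in any case necessary, by restricting $-K_X-L_i$ to $G$ and $\hat{G}$.

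Putting the pieces together gives $V$ Fano $\iff$ [$-K_Z-D_j$ ample for $j=1,2$] and [$-K_Z-D_i'-D_j$ ample for all $i,j$], where the backward implication additionally uses $-K_Z-D_i'$ ample for $i=1,2$ to run the previous step; and this last condition is itself forced by ``$V$ Fano'', since by Proposition~\ref{SwitchProposition} $V\simeq\mathscr{C}_A\bigl(X';(\phi')^{*}A,(\phi')^{*}D\bigr)$ with $X'=\mathscr{C}_A(Z;A',D')$, so $V$ Fano $\Rightarrow$ $X'$ Fano $\Rightarrow$ $-K_Z-D_1',\,-K_Z-D_2'$ ample (again \cite[Proposition~4.3]{Wis91} and Proposition~\ref{ConstructionA}). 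Thus the ``base of the tower is Fano'' requirement appears in the two interchangeable forms $-K_Z-D_j$ ample and $-K_Z-D_i'$ ample, exactly as recorded in the statement, the interchangeability reflecting the $V\simeq V'$ symmetry of Proposition~\ref{SwitchProposition}. The one delicate point is the bookkeeping of these logical dependencies — one must obtain ``$-K_Z-D_i'$ ample'' from Proposition~\ref{SwitchProposition} and \cite[Proposition~4.3]{Wis91} rather than circularly from the conditional step it feeds — while the ampleness–bigness estimates are a routine transcription of the proof of Proposition~\ref{ConstructionA}.
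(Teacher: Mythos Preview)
Your proposal is correct and follows essentially the same route as the paper: reduce $V$ Fano to $X$ Fano plus ampleness of $-K_X-L_i$ via Proposition~\ref{ConstructionA} and \cite[Proposition~4.3]{Wis91}, then use $-K_X-L_i\sim\phi^{*}(-K_Z-D_i')+G+\hat{G}$ and rerun the argument of Proposition~\ref{ConstructionA} with $-K_Z$ replaced by $-K_Z-D_i'$. Your explicit appeal to Proposition~\ref{SwitchProposition} to extract ``$-K_Z-D_i'$ ample'' as a necessary condition is a welcome clarification of a logical dependency the paper leaves implicit.
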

	
	\begin{remark}
		As for the Lefschetz defect, by Lemma \ref{deltaLemma}, we have $2\le \delta_{X}\le \delta_{V}$ so if we want $\delta_{V}=2$ we need $\delta_{X}= 2$. 
	\end{remark}
	\section{Fano 3-folds arising from Construction A} \label{Standard3folds}
	In this section we study how many families of Fano 3-folds with $\delta=2$ arise from Construction A.
	
	Let $X$  be  a Fano 3-fold with $\delta_X=2$. By \cite[Lemma 5.1]{Del14}, $\delta_X\in\{\rho_X-1,\rho_X-2\}$, so $\rho_X\in\{3,4\}$. If $X$ admits a construction of type A arising from a del Pezzo surface $Z$, we have $\rho_Z=\rho_X-2$, hence ${\rho_Z\in\{1,2\}}$. 
	\begin{itemize}
		\item The case $\rho_Z=1$ (i.e.~$Z\simeq \mathbb{P}^2$) has already been studied by C. Casagrande and S. Druel. With such a choice for $Z$, Construction A yields all families of Fano 3-folds $X$ with $\delta_X=2$ and $\rho_X=3$ \cite[Theorem 3.8]{CD15}. These are 6 among the 31 families of Fano 3-folds with $\rho_X=3$ (see \cite[Table 12.4]{IP99}). The remaining ones have $\delta_X=1$.    
		\item If $\rho_Z=2$, then $Z\simeq \mathbb{F}_1$ or $Z\simeq \mathbb{P}^1\times\mathbb{P}^1$. 
	\end{itemize}   
	For  each of the latter two del Pezzo surfaces, we study all possible choices of $D$ and  $A$ that yield $X=\mathscr{C}_A(Z;A,D)$ Fano, by using Proposition \ref{ConstructionA}. We then assign to every resulting Fano 3-fold its number in \cite[Table 6.1]{Ara+23}.   
	
	This study enables us to fully answer the question of how many families of Fano 3-folds with $\delta_X=2$ arise from Construction A. 
	\begin{itemize}
		\item $\rho_X=3\longrightarrow$ 6/6 families arise from Construction A.
		\item $\rho_X=4\longrightarrow$ 9/13 families arise from Construction A.
	\end{itemize}
	
	\begin{remark}\label{-K_X^3}
		Given $X=\mathscr{C}_A(Z;A,D_i)$, using that \sloppy ${-K_X\sim \phi^{*}(-K_Z)+G+\hat{G}}$ we can compute:
		\begin{align}
			-K_X^3=D_1^2+D_2^2+6(-K_Z)^2-3(-K_Z)\cdot (D_1+D_2).
		\end{align} 
	\end{remark}     
	\subsection{The case $Z\simeq\mathbb{F}_1$} In $N_1(Z)$, let $\mathbf{f}$ be the class of the strict transform of a line passing through the blown-up point  $p\in\mathbb{P}^2$ and denote by $\mathbf{e}$ the class of the exceptional divisor. Let $\mathbf{l}$ be the class of the transform of a generic line in $\mathbb{P}^2$, so that $\mathbf{l}=\mathbf{f}+\mathbf{e}$.  We have $\Eff(Z)=\NE(Z)=\mathbb{R}_{\ge 0}\mathbf{f}\oplus\mathbb{R}_{\ge 0}\mathbf{e}$, $\Nef(Z)=\mathbb{R}_{\ge 0}\mathbf{f}\oplus\mathbb{R}_{\ge 0}\mathbf{l}$ and $-K_Z=3\mathbf{f}+2\mathbf{e}$.
	This is enough to find all couples $(A,D)$ satisfying conditions in Proposition \ref{ConstructionA}.
	
	In the end we get the following 4 known families of Fano 3-folds, where we used
	Remark \ref{-K_X^3} to compute $-K_X^3$.
	\begin{table}[H] 
		\centering
		\begin{tabular}{||c | c || c | c ||} 
			\hline
			$A$ & $D$ & $-K_X^3$ & \# of the family of $X=\mathscr{C}_A(Z;A,D)$ in \cite[Table 6.1]{Ara+23} \\ [0.5ex] 
			\hline\hline
			\multirow{2}{*}{$\mathbf{e}$} & $-\mathbf{f}/\mathbf{f} + \mathbf{e}$ & 46 &\#4-12 \\ 
			\cline{2-4}
			& $\mathbf{e}/0$ & 44 &\#4-11 \\
			\hline
			$\mathbf{f} + \mathbf{e}$ & $\mathbf{f} + \mathbf{e}/0$ & 40 &\#4-9 \\
			\hline
			$2\mathbf{f} + 2\mathbf{e}$ & $\mathbf{f} + \mathbf{e}$ & 32 & \#4-4 \\ [1ex] 
			\hline
		\end{tabular}
		\centering
		\caption{Choices of $A$, $D$ in $Z=\mathbb{F}_1$.}
		\label{table:1}
	\end{table}	
	\begin{remark}
		When $-K_X^3$ fails to determine the corresponding family unumbiguously (e.g.~when $-K_X^3=32$), we are still able to solve the ambiguity through geometric arguments. 
	\end{remark}
	\subsection{The case $Z\simeq \mathbb{P}^1\times\mathbb{P}^1$.} In $N_1(Z)$, let $\mathbf{l_1}$ and $\mathbf{l_2}$ be the classes of the two rulings of $Z$. They generate $N_1(Z)$. Moreover, we have $\Eff(Z)=\Nef(Z)=\NE(Z)=\mathbb{R}_{\ge 0}\mathbf{l_1}\oplus\mathbb{R}_{\ge 0}\mathbf{l_2}$,  and $-K_Z=2\mathbf{l_1}+2\mathbf{l_2}$. This is enough to find all couples $(A,D)$ satisfying conditions in Proposition \ref{ConstructionA}.
	
	We recover the following known 7 families of Fano 3-folds.

	\begin{table}[H]
		\centering
		\begin{tabular}{||c | c || c | c ||} 
			\hline
			$A$ & $D$ & $-K_X^3$ & \# of the family of $X=\mathscr{C}_A(Z;A,D)$ in \cite[Table 6.1]{Ara+23} \\ [0.5ex] 
			\hline\hline
			\multirow{2}{*}{$\mathbf{l_1}$} 	& $\mathbf{l_1}+\mathbf{l_2}/-\mathbf{l_2}$ & 44 & \#4-11 \\ 
			\cline{2-4}
			& $\mathbf{l_1}/0$ & 42 & \#4-10 \\
			\cline{2-4}
			& $\mathbf{l_1}-\mathbf{l_2}/\mathbf{l_2}$ & 40 & \#4-9 
			\\
			\hline
			\multirow{2}{*}{$\mathbf{l_1}+\mathbf{l_2}$} & $0/\mathbf{l_1}+\mathbf{l_2}$ & 38 & \#4-8  \\  
			\cline{2-4} 
			& $\mathbf{l_1}/\mathbf{l_2}$ & 36 & \#4-7   \\
			\hline 
			$\mathbf{l_1}+2\mathbf{l_2}$ & $\mathbf{l_2}/\mathbf{l_1}+\mathbf{l_2}$ & 32 &\#4-5 \\
			\hline
			$2\mathbf{l_1}+2\mathbf{l_2}$ & $\mathbf{l_1}+\mathbf{l_2}$ & 28& \#4-2 \\
			\hline 
		\end{tabular}
		\caption{Choices of $A$, $D$ in $Z=\mathbb{P}^1\times\mathbb{P}^1$}
		
	\end{table}

	\section{Fano 3-folds arising from Construction B}\label{General3folds}
	
	All remaining families of Fano 3-folds with $\delta=2$ that do not arise from Construction A can be described as blow-ups of $\mathbb{P}^1\times\mathbb{P}^1\times\mathbb{P}^1$ along a smooth curve:
	
	\begin{itemize}
		\item[(\#4-6)] $X=$ Blow up of a curve of degree $(1,1,1)$. 
		\item[(\#4-3)] $X=$ Blow up of a curve of degree $(1,1,2)$.
		\item[(\#4-13)] $X=$ Blow up of a curve of degree $(1,1,3)$.
		\item[(\#4-1)] $X=$ Blow up of a curve of degree $(2,2,2)$ that is the complete intersection of two divisors of type $(1,1,1)$.
	\end{itemize}
	
	Observe that the blow up of a curve of degree $(1,1,3)$ was missing in the first classification by Mori and Mukai (\cite{MM81}), and was only discovered years later (see \cite{MM03}). 
	
	In this section, we study each of those families in order to determine whether they arise from Construction B or not. We will show that families \#4-6,\#4-3 and \#4-13 all arise from Construction B, while family \#4-1 cannot be obtained through any construction of type B, and is therefore the \emph{only} family of Fano 3-folds with $\delta=2$ for which that happens. 
	
	Throughout this section, in order to mirror the notation used to describe Construction B in Section \ref{SettingBCenter}, $Y$ will always denote the variety $\mathbb{P}^1\times\mathbb{P}^1\times\mathbb{P}^1$, $\sigma\colon  X\to Y$ will denote the blow up of the curve that defines $X$.   
	\paragraph{\fbox{$X=$Blow up of a $(1,1,k)$ curve in $Y=\mathbb{P}^1\times\mathbb{P}^1\times\mathbb{P}^1$, for $k=1,2,3$.}}
	Call $Z=\mathbb{P}^1\times\mathbb{P}^1$ and
	consider the projection ${\pi\colon Y\to Z=\mathbb{P}^1\times\mathbb{P}^1}$ onto the second and third factors. Now consider the following section of $\pi$: 
	\begin{align*}
		\mathbb{P}^1\times\mathbb{P}^1&\longrightarrow \mathbb{P}^1\times \mathbb{P}^1\times\mathbb{P}^1\\
		(y,z)&\mapsto (y,y,z)
	\end{align*} 
	and call the image $S_Y:=\{(y,y,z)| y,z\in \mathbb{P}^1\}= \{([x_0:x_1],[y_0:y_1],[z_0:z_1])| x_0y_1=x_1y_0\}$. Observe that $Y=\mathbb{P}(\mathcal{O}_Z\oplus\mathcal{O}_Z)$ and  the short exact sequence of $\mathcal{O}_Z$-sheaves associated to the section $S_Y$ is the following: 
	\begin{equation*}
		0\to \mathcal{O}_Z(-1,0)\longrightarrow \mathcal{O}_Z\oplus\mathcal{O}_Z\longrightarrow \mathcal{O}_Z(1,0)\to 0.
	\end{equation*}
	Let $A$ be a $(k,1)$-curve in $Z$ (with $k\in\mathbb{N}$), and let $A_Y$ be the complete intersection of $\pi^{-1}A$ and $S_Y$. A straightforward computation shows that $A_Y$ is a curve of degree $(1,1,k)$ in $Y$. 
	
	This shows that if $X$ is obtained through the blow up of a $(1,1,k)$ curve in $Y$, then ${X=\mathscr{C}_B(Z; A, S_Y)}$, and the families \#4-6, \#4-3 and \#4-13 all arise from Construction B. Observe that in all these constructions conditions \emph{(I)} and \emph{(II)} of Theorem \ref{ConstructionB} are satisfied. 
	\begin{remark}\label{Remark(2,1,1)}
		We now show that the second family (the blow up of a $(1,1,2)$-curve in $\mathbb{P}^1\times\mathbb{P}^1\times\mathbb{P}^1$) can be obtained through another construction of type B, for which condition \emph{(II)} of Theorem \ref{ConstructionB} is no longer satisfied. Call $Y=\mathbb{P}^1\times\mathbb{P}^1\times\mathbb{P}^1$,  $Z=\mathbb{P}^1\times\mathbb{P}^1$, and consider the projection $\pi_{12}:Y\to Z$ onto the first and second factors. Now consider the following section of $\pi_{12}$:
		\begin{align*}
			\mathbb{P}^1\times\mathbb{P}^1&\longrightarrow \mathbb{P}^1\times \mathbb{P}^1\times\mathbb{P}^1\\
			(x,y)&\mapsto (x,y,y^2)
		\end{align*} 
		and call the image $S_Y:=\{(x,y,y^2)| x,y\in \mathbb{P}^1\}= \{([x_0:x_1],[y_0:y_1],[z_0:z_1])| y_0^2z_1=y_1^2z_0\}$. Observe that $Y=\mathbb{P}(\mathcal{O}_Z\oplus\mathcal{O}_Z)$ and  the short exact sequence of $\mathcal{O}_Z$-sheaves associated to the section $S_Y$ is the following: 
		\begin{equation*}
			0\to \mathcal{O}_Z(-2,0)\longrightarrow \mathcal{O}_Z\oplus\mathcal{O}_Z\longrightarrow \mathcal{O}_Z(2,0)\to 0
		\end{equation*}
		which becomes 
		\begin{equation*}
			0\to \mathcal{O}_Z\longrightarrow \mathcal{O}_Z(2,0)\oplus\mathcal{O}_Z(2,0)\longrightarrow \mathcal{O}_Z(4,0)\to 0
		\end{equation*}
		after normalization. 
		Let $A$ be a $(1,1)$-curve in $Z$, and let $A_Y$ be the complete intersection of $\pi^{-1}(A)$ and $S_Y$. A straightforward computation shows that $A_Y$ is a curve of degree $(1,1,2)$ in $Y$. We know that the blow up $X$ of such a curve is a Fano 3-fold, because its description appears in the classification by Mori-Mukai. However, we have just provided a construction of type B for $X$ for which condition \emph{(II)} in Theorem \ref{ConstructionB} 
		\begin{align*}
			\text{\emph{(II)}}\;\mathcal{E}\otimes\mathcal{O}(-K_Z-D)\text{ is ample on }Z
		\end{align*} 
		is not satisfied. Indeed,
		in this setting $\mathcal{E}=\mathcal{O}(2,0)\oplus\mathcal{O}(2,0)$ and $\mathcal{O}(D)=\mathcal{O}(4,0)$, so we have 
		$\mathcal{E}\otimes\mathcal{O}(-K_Z-D)=\mathcal{O}(0,2)\oplus\mathcal{O}(0,2)$, which is not an ample vector bundle on $Z$. 	
		This proves that conditions \emph{(I)} and \emph{(II)} in Theorem \ref{ConstructionB} are sufficient but not necessary for $X$ to be Fano.  
		
	\end{remark}

	\paragraph{\fbox{$X=$ Blow up of a $(2,2,2)$-curve in $\mathbb{P}^1\times\mathbb{P}^1\times\mathbb{P}^1$.}}
	Call $Z=\mathbb{P}^1\times\mathbb{P}^1$ and consider the projection $\pi\colon Y\to Z$ onto the second and third factors. Call $A_Y$ a curve of degree $(2,2,2)$ that is the complete intersection of two $(1,1,1)$-divisors in $Y$, call $A$ its image in $Z$ through $\pi$, and $X$ the blow up of $A_Y$ in $Y$. It is not difficult to prove that $A$ has to be a curve of degree $(2,2)$ in $Z$. 
	Suppose there exists a section $S_Y$ of $\pi$ containing $A_Y$, so that $X=\mathscr{C}_B(Z;A,S_Y)$. Up to normalization, from the study of determinants it follows that the short exact sequence associated to $S_Y$ has to be of the following form
	\begin{equation}\label{ExactSection}
		0\to \mathcal{O}\rightarrow \mathcal{O}(a,b)\oplus\mathcal{O}(a,b)\rightarrow \mathcal{O}(2a,2b)\to 0
	\end{equation}	
	for some $a,b\in\mathbb{N}$. If $a=b=0$, $A_Y$ would lie in a section of $\pi$ that admits a second disjoint section. This would mean that $X$ arises from Construction A, which we excluded in the previous Section. Therefore we can assume that $a\ne0$, and the exact sequence (\ref{ExactSection}) does not split. Therefore, one necessary condition for $S_Y$ to exist is that $\Ext^1(\mathcal{O}(2a,2b),\mathcal{O})$ is nonzero. Observe that
	$\Ext^1(\mathcal{O}(2a,2b),\mathcal{O})\simeq\bigr(H^0(\mathbb{P}^1,\mathcal{O}(-2+2a))\otimes H^0(\mathbb{P}^1,\mathcal{O}(-2b))\bigr)\oplus \bigr(H^0(\mathbb{P}^1,\mathcal{O}(-2a))\otimes H^0(\mathbb{P}^1,\mathcal{O}(-2+2b))\bigr)$,
	which is nonzero if and only if  
	\begin{align}\label{ExtConditions}
		\begin{cases}
			a\ge 1\\
			b\le0	
		\end{cases} \text{ or } \begin{cases}
			a\le0\\
			b\ge 1.
		\end{cases}
	\end{align}   
	Now consider condition \emph{(I)} in Theorem \ref{ConstructionB}:
	\begin{equation*}
		(I) -K_Z+D-A\text{ is ample on }Z.
	\end{equation*}
	It is a necessary condition for $X$ to be Fano. In this setting, we have $\mathcal{O}(-K_Z)=\mathcal{O}(2,2)$, $\mathcal{O}(D)=\mathcal{O}(a,b)$, $\mathcal{O}(A)=\mathcal{O}(2,2)$. Then
	\begin{align*}
		\mathcal{O}(-K_Z+D-A)=\mathcal{O}(2+a-2,2+b-2)=\mathcal{O}(a,b)
	\end{align*}
	which is ample if and only if $a>0$ and $b>0$. However, this is in contrast with conditions (\ref{ExtConditions}), so we conclude that $X$, which we know is Fano, cannot arise from such a construction, and the blown up $(2,2,2)$-curve in $Y$ cannot be contained in any section of $\pi$.    
	
	Observe that this only proves that $X$ does not arise from Construction B with this specific choice of $\mathbb{P}^1$-bundle $Y$ and blown up locus $A_Y$, but $X$ could arise from a construction of type B with different data. We now prove that this is not the case. More precisely, we will prove that any divisorial contraction of $X$ must be the blow up of a $(2,2,2)$-curve in $Y=\mathbb{P}^1\times\mathbb{P}^1\times\mathbb{P}^1$, which we already proved that cannot be part of a construction of type B.    
	
	Consider the alternative description of $X$ as a $(1,1,1,1)$-divisor in $\mathbb{P}^1\times\mathbb{P}^1\times\mathbb{P}^1\times\mathbb{P}^1$. We now briefly study the elementary contractions of $X$. Observe that, by Lefschetz's Theorem, since $X$ is an ample divisor in $(\mathbb{P}^1)^4$, we have an isomorphism of the spaces of curves $N_1(X)\simeq N_1((\mathbb{P}^1)^4)$ and the cone of effective curves in $\NE(X)$ is embedded in the cone of effective curves $\NE((\mathbb{P}^1)^4)$. Let $l_i$ with $i=1,2,3,4$ be the canonical generators of  $\NE((\mathbb{P}^1)^4)$, each of which spans an extremal ray. Observe that in every class $l_i$ there is a curve contained in $X$. Indeed, as a subvariety of $(\mathbb{P}^1)^4$, $X$ can be defined by a polynomial equation of this type: $X_0\cdot L_0(Y,Z,T)+X_1\cdot L_1(Y,Z,T)=0$ with respect to the coordinates ([$X_0$: $X_1$], [$Y_0$:$Y_1$], [$Z_0$:$Z_1$], [$T_0$:$T_1$]), where the factors $L_i(Y,Z,T)$ are tri-homogeneous linear polynomials in the variables $Y_i$, $Z_i$, $T_i$. These polynomials $L_i$ define two subvarieties of $(\mathbb{P}^1)^3$ whose intersection is non-empty, and is in fact a curve of tri-degree $(2,2,2)$. Therefore there exists a point $p=(y,z,t)\in (\mathbb{P}^1)^3$ such that $L_1(p)=0=L_2(p)$, hence the line $\mathbb{P}^1\times\{y\}\times \{z\}\times \{t\}$ (which is equivalent to $l_1$ in $N_1((\mathbb{P}^1)^4))$ is contained in $X$.	
	
	Thus $\NE(X)$ contains all extremal rays of $\NE((\mathbb{P}^1)^4)$, i.e.~$\NE(X)=\NE((\mathbb{P}^1)^4)$. 
	It follows that the only elementary contractions of such variety are blow-downs $\sigma\colon X\to Y:=\mathbb{P}^1\times\mathbb{P}^1\times\mathbb{P}^1$ induced by the pojections of $(\mathbb{P}^1)^4$ onto any 3 of its factors. The blown-up locus is always a curve of type $(2,2,2)$ in $\mathbb{P}^1\times\mathbb{P}^1\times\mathbb{P}^1$ that is the smooth complete intersection of two $(1,1,1)$-divisors.  If Construction B applies, $Z$ must necessarily be isomorphic to $\mathbb{P}^1\times\mathbb{P}^1$, $\pi\colon Y\to Z$ is the projection onto two of the factors of $(\mathbb{P}^1)^3$, say the second and third factors, and $A:=\pi(A_Y)$ is a $(2,2)$-curve in $Z$. However we already proved that this cannot be the case.  
	\section{Preliminaries for the study of Fano 4-folds arising from Construction A} \label{Standard4folds}
	In this section our goal is to provide some useful theoretic results that will allow us to give a complete list of all families of Fano 4-folds arising from Construction A with $\rho\ge 4$ and $\delta=2$.

	\begin{remark}\label{RemarkStandard4folds}
		Suppose $Z$ is a smooth Fano variety with $\dim(Z)=n-1\ge 1$ such that there exist two Cartier divisors $A$ and $D$ in $Z$ giving rise to a smooth Fano variety $X$ with $\dim(X)=n$ and $\rho_X=\rho_Z+2$ through Construction $A$. 
		Since $A$ is effective, some extremal ray $R$ of $\NE(Z)$ must satisfy $A\cdot R> 0$. Let $C$ be an irreducible curve whose class in $\NE(Z)$ generates $R$ and let $c_{R}:Z\to W$ be the elementary extremal contraction associated to $R$ (we have $\rho_W=\rho_Z-1$). By Remark \ref{Degree1curve}, $C$ cannot have anticanonical degree 1, so that the length of $R$ is $\ell(R)\ge 2$.
		
		We now fix $n=4$ (i.e.~$\dim(Z)=3$), and $\rho_Z\ge 2$. In that case, by means of the known classification of elementary contractions of Fano 3-folds (see \cite[Theorem 1.4.3]{IP99}), we analyze in detail all possible types of contraction with length larger than 1:
		\begin{itemize}
			\item[(i)] If $c_{R}\colon Z\to W$ is a fiber type  contraction, then $1\le\dim(W)<\dim(Z)=3$. If $\dim(W)=1$, then $\rho_Z=2$, $W\simeq\mathbb{P}^1$ and $c_R$ is either a $\mathbb{P}^2$-bundle or a quadric bundle.
			On the other hand, if $\dim(W)=2$ then $W$ is a smooth del Pezzo surface and $c_R$ is a $\mathbb{P}^1$-bundle over $W$.
			\item[(ii)] Suppose $c_{R}\colon Z\to W$ is a divisorial contraction. Then $c_R$ has to be the blow up of a smooth point in $W$. Let $E$ be the exceptional divisor of $c_R$. We have $E\simeq \mathbb{P}^2$, which implies that $\dim(N_1(E,Z))=1$ and $\delta_{Z}=\rho_{Z}-1$. 
		\end{itemize}  
		
	\end{remark}
	\begin{theorem}\label{theorem4folds}
		Let $X=\mathscr{C}_A(Z;A,D)$ be a Fano 4-fold. 
		
		If $\rho_X=4$, then one of the following holds:
		\begin{itemize}
			\item[(A.1)] There exists a $\mathbb{P}^1$-bundle $Z\to \mathbb{P}^2$.
			\item[(A.2)] There exists a quadric bundle $Z\to \mathbb{P}^1$. 
			\item[(A.3)] There exists a $\mathbb{P}^2$-bundle $Z\to \mathbb{P}^1$. 
			\item[(A.4)] $Z\simeq \text{Bl}_{p}\mathcal{Q}_3$, where $\mathcal{Q}_3$ is a smooth quadric threefold in $\mathbb{P}^4$ and $p$ is a point in $\mathcal{Q}_3$. 
		\end{itemize}
		If $\rho_X=5$, then one of the following holds:
		\begin{itemize}
			\item [(B.1)] There exists a $\mathbb{P}^1$-bundle $Z\to W$ (where $W$ is a del Pezzo surface with $\rho_W=2$). 
			\item[(B.2)] $Z$ arises from a construction of type A with base $\mathbb{P}^2$.  
		\end{itemize} 
		Finally, if $\delta_X=2$, then $\rho_X\le6$. Moreover $\rho_X=6$ and $\delta_X=2$ if and only if $Z\simeq\text{Bl}_2(\mathbb{P}^2)\times\mathbb{P}^1$.  
	\end{theorem}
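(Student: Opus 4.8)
The plan is to reduce the statement to a finite classification of the base threefold $Z$. Recall that $Z$ is Fano with $\rho_Z=\rho_X-2$, and by Remark~\ref{RemarkStandard4folds} (via Remark~\ref{Degree1curve}) there is an extremal ray $R$ of $\NE(Z)$ with $A\cdot R>0$ and $\ell(R)\ge 2$; let $c_R\colon Z\to W$ be its elementary contraction, so that $\rho_W=\rho_Z-1$. First I would run through Mori's classification of extremal contractions of Fano threefolds (\cite[Theorem~1.4.3]{IP99}), distinguishing the case where $c_R$ is of fiber type (onto a surface or onto a curve) from the divisorial case, where by Remark~\ref{RemarkStandard4folds}(ii) $c_R$ is the blow-up of a smooth point $p$ of a Fano threefold $W$ and $\delta_Z=\rho_Z-1$.

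For $\rho_X=4$ we have $\rho_Z=2$, $\rho_W=1$. If $c_R$ is a fiber type contraction onto a surface, that surface is del Pezzo of Picard number $1$, i.e.\ $\mathbb{P}^2$, and since $\ell(R)\ge 2$ rules out reducible fibres, $c_R$ is a $\mathbb{P}^1$-bundle, giving (A.1); if $c_R$ is of fiber type onto a curve, then $W\simeq\mathbb{P}^1$ and $\ell(R)\ge 2$ forces the general fibre to be $\mathbb{P}^2$ or $\mathbb{P}^1\times\mathbb{P}^1$, i.e.\ (A.3) or (A.2); if $c_R$ is divisorial then $Z=\Bl_pW$ with $\rho_W=1$, and since $(-K_{\Bl_pW})^3=(-K_W)^3-8$ and the strict transform of a rational curve of minimal $(-K_W)$-degree through $p$ must have positive anticanonical degree, the classification of Fano threefolds of Picard number $1$ leaves only $W\in\{\mathbb{P}^3,\mathcal{Q}_3\}$: the first gives $\Bl_p\mathbb{P}^3\simeq\mathbb{P}_{\mathbb{P}^2}(\mathcal{O}\oplus\mathcal{O}(1))$, which is (A.1), and the second is (A.4).

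For $\rho_X=5$ we have $\rho_Z=3$, $\rho_W=2$, so $c_R$ cannot be of fiber type onto a curve. If $c_R$ is of fiber type onto a surface, that surface is del Pezzo of Picard number $2$ and $c_R$ is a $\mathbb{P}^1$-bundle, which is (B.1). Otherwise $Z=\Bl_pW$ with $W$ a Fano threefold of Picard number $2$ such that $\Bl_pW$ is Fano; here I would list, using the classification of Fano threefolds with $\rho=2$, the admissible pairs $(W,p)$ and check in each that $Z=\Bl_pW$ either carries a $\mathbb{P}^1$-bundle structure over a del Pezzo surface of Picard number $2$ (case (B.1)) or a conic bundle structure over $\mathbb{P}^2$ with two disjoint sections (case (B.2)). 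The representative case is $W=\Bl_\ell\mathbb{P}^3$, the blow-up of a line: blowing up an appropriate point $p$ produces a conic bundle over $\mathbb{P}^2$ which is reducible exactly over one line and admits two disjoint $\mathbb{P}^2$-sections, hence is of the form $\mathscr{C}_A(\mathbb{P}^2;A',D')$.

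For the final assertion, suppose $\delta_X=2$. By Lemma~\ref{deltaLemma}(ii) we get $\delta_Z\le\delta_X=2$, and since $Z$ is a Fano threefold $\delta_Z\in\{\rho_Z-1,\rho_Z-2\}$ (\cite[Lemma~5.1]{Del14}), whence $\rho_Z\le 4$ and $\rho_X=\rho_Z+2\le 6$. If $\rho_X=6$ then $\rho_Z=4$ and $\delta_Z=2$, so $Z$ is one of the $13$ Fano threefolds with $\rho=4$ and $\delta=2$ classified in Sections~\ref{Standard3folds}--\ref{General3folds}; moreover Lemma~\ref{deltaLemma}(ii) forces $\codim N_1(A,Z)\le 1$ (otherwise $\delta_X\ge 3$), and combining this with the ampleness of $-K_Z-D_1$ and $-K_Z-D_2$ from Proposition~\ref{ConstructionA}, I would check family by family that the only surviving base is $Z\simeq\Bl_2\mathbb{P}^2\times\mathbb{P}^1$, realized with $A=\Bl_2\mathbb{P}^2\times\{\mathrm{pt}\}$ and $D=0$. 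Conversely, for this $Z$, $A$, $D$ the conditions of Proposition~\ref{ConstructionA} hold and $X=\mathscr{C}_A(Z;A,0)\simeq\Bl_2\mathbb{P}^2\times\Bl_{\mathrm{pt}}(\mathbb{P}^1\times\mathbb{P}^1)\simeq\Bl_2\mathbb{P}^2\times\Bl_2\mathbb{P}^2$, which is Fano with $\delta_X=\max\{\delta_{\Bl_2\mathbb{P}^2},\delta_{\mathbb{P}^1}\}=2$ by \cite[Lemma~5]{Cas23}, and with $\rho_X=6$. The conceptual skeleton above is short; the main work, and the likely main obstacle, is the two finite but delicate case checks: determining exactly which $\Bl_pW$ with $\rho_W\le 2$ remain Fano and showing each falls under (A.1)--(A.4) or (B.1)--(B.2), and ruling out, among the $13$ Fano threefolds with $\rho=4$, $\delta=2$, every base other than $\Bl_2\mathbb{P}^2\times\mathbb{P}^1$.
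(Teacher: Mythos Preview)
Your overall strategy is correct and would succeed, but in two places the paper exploits a structural shortcut that you replace with a case-by-case enumeration.

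For $\rho_X=5$, divisorial case: you propose to list all Fano threefolds $W$ with $\rho_W=2$ such that $\Bl_pW$ is Fano, and then verify for each that $Z=\Bl_pW$ satisfies (B.1) or (B.2). The paper instead observes directly from Remark~\ref{RemarkStandard4folds}(ii) that in this case $\delta_Z=\rho_Z-1=2$; since $\rho_Z=3$, \cite[Theorem~3.8]{CD15} then says that $Z$ itself arises from Construction A over $\mathbb{P}^2$, which is exactly (B.2). No enumeration of the $\rho_W=2$ list is needed.

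For $\rho_X=6$: you correctly deduce $\rho_Z=4$ and $\delta_Z=2$, but then propose to scan the $13$ families of Fano threefolds with $\rho=4$, $\delta=2$. The paper's argument is much shorter: apply Remark~\ref{RemarkStandard4folds} to $Z$ again. If the contraction $c_R$ were divisorial (blow-up of a smooth point), Remark~\ref{RemarkStandard4folds}(ii) would give $\delta_Z=\rho_Z-1=3$, contradicting $\delta_Z\le\delta_X=2$. Hence $c_R$ is of fiber type, so $Z$ is a $\mathbb{P}^1$-bundle over a del Pezzo surface $W$ with $\rho_W=3$, i.e.\ $W\simeq\Bl_2\mathbb{P}^2$; then the Szurek--Wi\'sniewski classification \cite{SW90} forces $Z\simeq\Bl_2\mathbb{P}^2\times\mathbb{P}^1$. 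This avoids the $13$-family check entirely.

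Two small corrections: for $\rho_X=4$ the paper cites \cite{BCW02} rather than arguing by hand that only $W\in\{\mathbb{P}^3,\mathcal{Q}_3\}$ admit a Fano point blow-up (your sketch there is plausible but not a proof as written); and in your converse computation at the end you wrote $\delta_X=\max\{\delta_{\Bl_2\mathbb{P}^2},\delta_{\mathbb{P}^1}\}$, but since $X\simeq\Bl_2\mathbb{P}^2\times\Bl_2\mathbb{P}^2$ both factors are $\Bl_2\mathbb{P}^2$.
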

	
	\begin{proof}
		Suppose $\rho_X=4$ (i.e.~$\rho_Z=2$), and consider the elementary contraction $c_R\colon Z\to W$ introduced in Remark \ref{RemarkStandard4folds}. Then there are two possibilities:
		\begin{itemize}
			\item[(i)] $c_R: Z\to W$ is a fiber-type contraction. 
			\item[(ii)] $c_R:Z\to W$ is the blow up of a smooth point in $W$.  
		\end{itemize}  
		Case (i) yields (A.1), (A.2), (A.3). Case (ii) yields (A.4) by the known classification of Fano varieties that can be realized as the blow up of a smooth point (\cite{BCW02}). 
		
		Suppose $\rho_X= 5$ (i.e.~$\rho_Z= 3$). 
		Again, consider the elementary contraction $c_R\colon Z\to W$ introduced in Remark \ref{RemarkStandard4folds}, which satisfies (i) or (ii) above. Since $\rho_Z=3$, Case (i) can only occur with $\dim(W)=2$, which yields (B.1). 
		Consider case (ii). If $\rho_X=5$, then $\rho_Z=3$ and $\delta_Z=\rho_Z-1=2$. We know every family of Fano 3-folds with such invariants: they are 6 families all arising from Construction A over $\mathbb{P}^2$. Among them, only 3 arise from the blow up of a smooth point, namely \#3-14, \#3-19 and \#3-26 in \cite[Table 6.1]{Ara+23}. 
		
		Finally, observe that $\delta_Z\le\delta_X$ by Lemma \ref{deltaLemma}, and $\delta_Z\in\{\rho_Z-1,\rho_Z-2\}$ because $Z$ has dimension 3 (see \cite[Lemma 5.1]{Del14}). Hence  if $\delta_X=2$, then $\rho_Z-2\le\delta_Z\le\delta_X=2$, which yields
		${\rho_X=\rho_Z+2\le 6}$. If $\rho_X=6$, then $\rho_Z=4$ and, if case (ii) applies, $\delta_Z=\rho_Z-1=3$. But then $\delta_X\ge\delta_Z=3$, so we rule out that option. Therefore $Z$ must have a $\mathbb{P}^1$-bundle structure over a del Pezzo surface $W$ with $\rho_W=\rho_Z-1=3$, i.e.~$W\simeq \text{Bl}_{2}\mathbb{P}^2$. By the known classification of Fano projective bundles of rank 2 over a del Pezzo surface (see \cite{SW90}), $Z$ must then be isomorphic to $\text{Bl}_{2}\mathbb{P}^2\times\mathbb{P}^1$.
		This concludes the proof. 
	\end{proof}
	
	In order to check if the Fano 4-folds we construct are toric, we need the following Lemma.
	
	\begin{lemma}\label{toricLemma}
		Let $X=\mathscr{C}_A(Z;A,D)$. Then $X$ is toric if and ony if $Z$ is toric and $A\subset Z$ is invariant under the torus action. 
	\end{lemma}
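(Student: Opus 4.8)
The plan is to use that both structure maps $\sigma\colon X\to Y$ and $\pi\colon Y\to Z$ of Construction A are \emph{elementary} contractions (here $\rho_X-\rho_Y=1$ and $\rho_Y-\rho_Z=1$), together with the toric minimal model program, in order to move the toric structure up and down the tower $X\to Y\to Z$.

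For the implication ($\Leftarrow$), I would start from a torus $T$ acting on $Z$ with $A$ invariant. Since $Z$ is smooth and toric, the class of $D$ is represented by a $T$-invariant divisor, and replacing $D$ by it changes neither $\mathcal{O}_Z(D)$, nor $Y$, nor $X$; so I may assume $\mathcal{O}_Z(D)$ is $T$-linearized. Then $\mathcal{O}_Z\oplus\mathcal{O}_Z(D)$ is a $T$-equivariant bundle, so $Y=\mathbb{P}_Z(\mathcal{O}_Z\oplus\mathcal{O}_Z(D))$ is toric and $\pi$ is equivariant; the sections $G_Y,\hat{G}_Y$, being the projectivizations of the two linearized summands, are $T$-invariant prime divisors, and $\pi^{-1}(A)$ is $T$-invariant. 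Hence $A_Y=\pi^{-1}(A)\cap\hat{G}_Y$ is a $T$-invariant, smooth, irreducible subvariety, and $X=\Bl_{A_Y}Y$ is toric, because blowing up a toric variety along a torus-invariant center amounts to a star subdivision of the fan.

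For ($\Rightarrow$), assume $X$ is toric. The cone of curves contracted by $\sigma$ is a face of $\NE(X)$ of dimension $\rho_X-\rho_Y=1$, hence the extremal ray spanned by the fibre class $e$ (cf.\ Remark~\ref{Intersection table}). By the toric MMP the contraction of this ray is a morphism of toric varieties, and by uniqueness of the contraction of an extremal ray it coincides with $\sigma$; thus $Y$ is toric and $\sigma$ is equivariant. The exceptional divisor $E$, being the (irreducible) exceptional divisor of an equivariant birational morphism, is then a torus-invariant prime divisor, so $A_Y=\sigma(E)$ is torus-invariant in $Y$. Running the same argument with $\pi$ — whose contracted curves form the extremal ray $\mathbb{R}_{\ge0}f$ generated by a fibre of $\pi$ — shows that $Z$ is toric and $\pi$ is equivariant; therefore $A=\pi(A_Y)$, the image of a torus-invariant subvariety under an equivariant morphism, is torus-invariant in $Z$, as desired.

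I expect the only non-formal points to be the two toric-MMP inputs: that an elementary contraction of a smooth projective toric variety is automatically a morphism of toric varieties (so its target is toric), and that its exceptional divisors are torus-invariant. Both are standard (Reid, \emph{Decomposition of toric morphisms}; Cox--Little--Schenck), and since $X$, $Y$, $Z$ are all smooth and projective no singularity subtleties arise; the remainder is routine bookkeeping with $\mathbb{P}^1$-bundle sections and blow-ups.
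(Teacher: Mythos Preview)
Your proof is correct and follows essentially the same route as the paper's: both directions descend/ascend the tower $X\to Y\to Z$, using that projectivizing a split rank-two bundle and blowing up a torus-invariant center preserve toricity, and conversely that the elementary contractions $\sigma$ and $\pi$ of a toric $X$ are toric morphisms with torus-invariant exceptional loci. The paper states these steps more tersely without explicitly naming the toric MMP, whereas you spell out the justification via extremal rays and Reid/Cox--Little--Schenck; the content is the same.
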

	\begin{proof}
		Suppose $Z$ is toric and $A$ is invariant under the action of the torus. We then have that $Y=\mathbb{P}_Z(\mathcal{O}\oplus\mathcal{O}(D))$  is a smooth toric variety. If $\pi\colon  Y\to Z$ is the projection onto $Z$, we have that $\pi^{-1}(A)$ and the sections $G_Y$, $\hat{G}_Y$ are stable under the action of the torus. Therefore $A_Y:=\pi^{-1}(A)\cap \hat{G}_Y$ is also stable under the action of the torus, and the blow up $X$ of $A_Y$ in $Y$ is a toric variety. 
		
		Viceversa, if $X$ is toric, then $Y$ is also toric and the blown up locus $A_Y$ is stable under the action of the torus. This then implies that $Z$ is toric and $A$ is stable under the action of the torus on $Z$.      
	\end{proof}

	\section{Fano 4-folds arising from Construction A with $\rho= 6$ and $\delta=2$}
	In Theorem \ref{theorem4folds}, we proved that the only Fano 4-folds $X$ with $\rho_X= 6$ and $\delta_X=2$ arising from Construction A are the ones which project onto a smooth Fano base $Z$ that is isomorphic to $\text{Bl}_2\mathbb{P}^2\times\mathbb{P}^1$. We now determine all choices of $A$ and $D$ in $Z$ that yield a construction of type A. For this purpose, we describe the effective and ample cones of $Z$. In $N_1( \text{Bl}_2\mathbb{P}^2)$, let $\mathbf{f}$ be the class of the strict transform of the line passing through the blown up points in $\mathbb{P}^2$, and let $\mathbf{e_1}$, $\mathbf{e_2}$ be the classes of the two exceptional divisors. 
	
	Call
	$U_1:=\mathbf{f}\times \mathbb{P}^1$, $U_2:=\mathbf{e_1}\times\mathbb{P}^1$,
	$U_3:=\mathbf{e_2}\times\mathbb{P}^1$ $U_4:=\text{Bl}_2\mathbb{P}^2\times \{*\}$. Then
	$N^1(Z)$ is generated by $U_1,U_2,U_3, U_4$. 
	
	Call $u_1:=\mathbf{f}\times\{*\}$, $u_2:=\mathbf{e_1}\times \{*\}$,
	$u_3:=\mathbf{e_2}\times \{*\}$ $u_4:=\{*\}\times\mathbb{P}^1$. Then $N_1(Z)$ is generated by $u_1,u_2,u_3, u_4$.
	
	If $D\in N^1(Z)$, it can be written as $p_1^{*}(D_1)\otimes p_2^{*}(D_2)$ where $D_1\in N^1(\mathbb{F}_1)$ and $D_2\in N^1(\mathbb{P}^1)$. By the K\"unneth formula,
	\[
	H^0(Z,\mathcal{O}(D))=H^0(\mathbb{F}_1,\mathcal{O}(D_1))\otimes H^0(\mathbb{P}^1, \mathcal{O}(D_2)).
	\]
	Hence, $\Eff(Z)=\{p_1^{*}D_1\otimes p_2^{*}D_2| \text{ }D_1\text{ is effective in }{\Bl}_{2}\mathbb{P}^2\text{, and }D_2\text{ is effective in }\mathbb{P}^1\}=\sum_{i=1}^{4}\mathbb{R}_{\ge 0}[U_i]$.
	The following intersection table holds:
	
	\begin{center}
		\begin{tabular}{|c||c|c|c|c|}
			\hline
			$\cdot$ & $u_1$ & $u_2$ & $u_3$ & $u_4$\\
			\hline\hline
			$U_1$ &  -1 & 1 & 1 &0  \\
			\hline
			$U_2$ & 1 & -1 & 0 & 0 \\
			\hline 
			$U_3$ & 1 & 0 & -1 & 0 \\
			\hline		
			$U_4$ & 0 & 0&0 & 1\\
			\hline
		\end{tabular}
	\end{center}
	
	It is then easy to check that $\NE(Z)$ is generated by $u_1,u_2,u_3,u_4$.
	By simple computations, we get $\Nef(Z)=\mathbb{R}_{\ge 0}[U_1+U_2+U_3]+\mathbb{R}_{\ge 0}[U_1+U_2]+\mathbb{R}_{\ge 0}[U_1+U_3]+\mathbb{R}_{\ge 0}[U_4]$. 
	
	Recall that
	\[-K_Z\sim p_{1}^{*}(-K_{\text{Bl}_2\mathbb{P}^2})+p_2^{*}(-K_{\mathbb{P}^1})\sim 3U_1+2U_2+2U_3+2U_4=(U_1+U_2+U_3)+(U_1+U_2)+(U_1+U_3)+2U_4.\]
	
	Let $(a_1,a_2,a_3,a_4)$ be the coordinates of a smooth irreducible surface $A$ in $Z$ with respect to the basis $\{U_i\}$. Let $(d_1,d_2,d_3,d_4)$ be the coordinates of a divisor $D$, $(d_1',d_2',d_3',d_4')$ be the coordinates of a divisor $D'$. Let us study all possible choices of $A$, $D$, $D'$ such that $D+D'=A$ is a smooth irreducible surface, and $-K_Z-D$ and $-K_Z-D'$ are ample.
	
	Now, 
	\begin{align*}
		-K_Z-D\text{ is ample}&\iff (3-d_1)U_1+(2-d_2)U_2+(2-d_3)U_3+(2-d_4)U_4\text{ is ample}\\
		&\iff
		d_1\ge d_2+d_3,\;
		d_1\le d_2,\;
		d_1\le d_3,\text{ and }
		d_4\le 1.
	\end{align*}
	The same holds for $D'$'s coordinates. Observe that since $A$ is effective, all its coordinates must be non-negative, and since it is the sum of $D$ and $D'$, we must have $0\le a_2+a_3\le a_1\le \min(a_2,a_3)$ and $0\le a_4 \le  2$.
	
	We may assume $0\le a_2\le a_3$, so that $a_2+a_3\le a_1\le a_2$ implies that $a_3 = 0$, which yields $a_1=a_2=0$ as well. Therefore, the only admissible choices of $A$ are the following: $A=U_4$ and $A=2U_4$. However, the latter cannot represent a smooth irreducible hypersurface in $Z$, because it is the pull back by the projection $p_2$ of a non-reduced divisor in $\mathbb{P}^1$. Therefore the only possibility for $A$ is $A=U_4$. There is only one choice of $D, D'$ for this hypersurface: $D,D'\in\{0, U_4\}$, which yields $X\simeq \text{Bl}_2\mathbb{P}^2\times\text{Bl}_2\mathbb{P}^2$. This proves the following theorem: 
	\begin{theorem}\label{Rho6}
		The only Fano 4-fold with $\rho_X\ge6$ and $\delta_X=2$ arising from Construction A is \sloppy ${X\simeq \text{Bl}_2\mathbb{P}^2\times\text{Bl}_2\mathbb{P}^2}$. 
	\end{theorem}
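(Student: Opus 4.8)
The plan is to reduce the statement to a finite combinatorial check via Theorem \ref{theorem4folds} and Proposition \ref{ConstructionA}. By the final assertion of Theorem \ref{theorem4folds}, a Fano $4$-fold $X=\mathscr{C}_A(Z;A,D)$ with $\delta_X=2$ satisfies $\rho_X\le 6$, and $\rho_X=6$ together with $\delta_X=2$ forces $Z\simeq\Bl_2\mathbb{P}^2\times\mathbb{P}^1$; so under the hypotheses $\rho_X\ge 6$, $\delta_X=2$ we have $\rho_X=6$ and $Z$ is this fixed threefold, and the problem becomes: classify all smooth irreducible hypersurfaces $A\subset Z$ and Cartier divisors $D$ on $Z$ such that $X=\mathscr{C}_A(Z;A,D)$ is Fano. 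By Proposition \ref{ConstructionA}, writing $D_1=D$ and $D_2=A-D$, this is equivalent to asking that $A\sim D_1+D_2$ be the class of a smooth irreducible surface with both $-K_Z-D_1$ and $-K_Z-D_2$ ample on $Z$.

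First I would fix explicit coordinates on $Z=\Bl_2\mathbb{P}^2\times\mathbb{P}^1$. Using the two projections, take the divisor classes $U_1,U_2,U_3,U_4$ (the pullbacks of $\mathbf{f},\mathbf{e_1},\mathbf{e_2}$ from $\Bl_2\mathbb{P}^2$ and of a point from $\mathbb{P}^1$) and the dual curve classes $u_1,u_2,u_3,u_4$, write down the intersection table, and identify the relevant cones: by the K\"unneth formula $\Eff(Z)=\sum_{i=1}^4\mathbb{R}_{\ge0}[U_i]$, one checks $\NE(Z)=\sum_{i=1}^4\mathbb{R}_{\ge0}u_i$, and a short computation gives $\Nef(Z)=\mathbb{R}_{\ge0}[U_1+U_2+U_3]+\mathbb{R}_{\ge0}[U_1+U_2]+\mathbb{R}_{\ge0}[U_1+U_3]+\mathbb{R}_{\ge0}[U_4]$, with $-K_Z=3U_1+2U_2+2U_3+2U_4$. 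Then I translate the two ampleness conditions into coordinate inequalities: if $D=\sum d_iU_i$ then $-K_Z-D$ is ample iff $d_1\ge d_2+d_3$, $d_1\le d_2$, $d_1\le d_3$ and $d_4\le 1$, and the same holds for the coordinates $d_i'$ of $D_2$.

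Adding the two systems and using that $A=D_1+D_2$ is effective (so its coordinates $a_i=d_i+d_i'$ are non-negative) yields $0\le a_2+a_3\le a_1\le\min(a_2,a_3)$ and $0\le a_4\le 2$. By the symmetry exchanging $\mathbf{e_1}$ and $\mathbf{e_2}$ I may assume $a_2\le a_3$; then $a_2+a_3\le a_1\le a_2$ forces $a_3=0$, hence $a_1=a_2=0$ as well, so $A\in\{U_4,2U_4\}$. The class $2U_4$ is the pullback under $p_2$ of a non-reduced divisor on $\mathbb{P}^1$, so it cannot be represented by a smooth irreducible surface; thus $A=U_4$. For this $A$ the only splittings $A=D_1+D_2$ satisfying the inequalities above are $\{D_1,D_2\}=\{0,U_4\}$, and unwinding Construction A (equivalently, recognizing $\mathscr{C}_A(\mathbb{P}^1;\text{pt},0)\simeq\Bl_1\mathbb{P}^1\!\times\!\cdots$ — more precisely that Construction A over the $\mathbb{P}^1$-factor with center a point produces a copy of $\Bl_2\mathbb{P}^2$ only through the product structure) shows $X\simeq\Bl_2\mathbb{P}^2\times\Bl_2\mathbb{P}^2$, which is indeed Fano with $\rho=6$ and $\delta=2$ by Lemma \ref{deltaLemma}.

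The main obstacle is not conceptual but organizational: correctly computing $\Nef(Z)$ and hence the precise ampleness inequalities, and then arguing cleanly that these constraints on the coordinates of $A$ leave no option other than $A=U_4$. In particular, the two delicate points are the reduction ``we may assume $a_2\le a_3$'', which must be justified by the automorphism of $Z$ swapping the two exceptional divisors, and the exclusion of $A=2U_4$, which has to be done on the basis of smoothness and irreducibility rather than purely numerically.
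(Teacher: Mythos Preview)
Your proposal is correct and follows essentially the same route as the paper: reduce to $Z\simeq\Bl_2\mathbb{P}^2\times\mathbb{P}^1$ via Theorem~\ref{theorem4folds}, compute $\Eff(Z)$, $\NE(Z)$, $\Nef(Z)$ and $-K_Z$ in the basis $U_1,\dots,U_4$, translate Proposition~\ref{ConstructionA} into the integer inequalities $d_1\ge d_2+d_3$, $d_1\le d_2$, $d_1\le d_3$, $d_4\le 1$ (and the same for $D'$), conclude $A\in\{U_4,2U_4\}$, discard $2U_4$ on smoothness/irreducibility grounds, and identify the unique resulting $X$. The only cosmetic difference is that the paper does not invoke Lemma~\ref{deltaLemma} at the end (the value $\delta_X=2$ is already part of the hypothesis via Theorem~\ref{theorem4folds}).
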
  
	\section{Fano 4-folds arising from Construction A with $\rho=5$ and $\delta=2$}\label{Standard4foldsRho5}
	
	According to Theorem \ref{theorem4folds}, given a Fano 4-fold $X$ arising from Construction A with $\rho_X=5$, there are two possibilities for $Z$. 
	\begin{itemize}
		\item[(B.1)] $Z$ has a $\mathbb{P}^1$-bundle structure $p:Z\to W$ over a del Pezzo surface $W$. 
		\item[(B.2)] $Z$ arises from a construction of type $A$ with base $\mathbb{P}^2$.  	
	\end{itemize}	
	
	A careful study of those cases leads to a complete classification of Fano 4-folds with $\rho=5$ that arise from Construction A. All of them have Lefschetz defect $\delta =2$. Indeed, let $X=\mathscr{C}_A(Z;A,D)$ be a Fano 4-fold with $\rho_X=5$. The base $Z$ is a Fano 3-fold with $\rho_Z=3$, therefore \sloppy ${\delta_Z\in\{\rho_Z-2,\rho_Z-1\}=\{1,2\}}$. By Lemma \ref{deltaLemma}, we know that $2\le \delta_X\le 3$. Now, Fano 4-folds with defect $\delta=3$ and $\rho =5$ are known (see \cite{CR22}), and none of them appears in the list of our 50 families. 
	
	We ultimately obtain the following theorem.
	\begin{theorem}\label{AllRho5}
		There are 50 distinct families of Fano 4-folds $X$ with $\rho_X=5$ arising from Construction A, and they are all listed in Table \ref{FinalTableRho5}. All of them have $\delta_X=2$. Among them, 27 are not toric nor products of lower dimensional varieties.    
	\end{theorem}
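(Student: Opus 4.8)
The plan is to carry out the case analysis provided by Theorem \ref{theorem4folds}: if $X=\mathscr{C}_A(Z;A,D)$ is a Fano $4$-fold with $\rho_X=5$, then $\rho_Z=3$ and either (B.1) $Z$ is a $\mathbb{P}^1$-bundle over a del Pezzo surface $W$ with $\rho_W=2$, or (B.2) $Z$ itself arises from Construction A over $\mathbb{P}^2$. In case (B.1), $W\simeq\mathbb{F}_1$ or $W\simeq\mathbb{P}^1\times\mathbb{P}^1$, so I would first list all rank-$2$ bundles $\mathcal{E}$ on $W$ (up to twist and up to normalization) for which $Z=\mathbb{P}_W(\mathcal{E})$ is a Fano $3$-fold; this is a short finite list, obtainable from the classification of Fano $\mathbb{P}^1$-bundles over del Pezzo surfaces in \cite{SW90} or checked directly through the nef and effective cones of $Z$. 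In case (B.2), $Z$ ranges over the $6$ families of Fano $3$-folds with $\rho_Z=3$ and $\delta_Z=2$ produced by Construction A over $\mathbb{P}^2$ (see \cite[Theorem 3.8]{CD15}).

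For each base $Z$ in (B.1), the next step is to determine all pairs $(A,D)$ --- equivalently all triples $(A,D_1,D_2)$ with $A\sim D_1+D_2$ and $A$ smooth irreducible --- for which $X=\mathscr{C}_A(Z;A,D)$ is Fano. By Proposition \ref{ConstructionA} this reduces to the ampleness of $-K_Z-D_i$ for $i=1,2$, which, once $\Eff(Z)$, $\Nef(Z)$ and $-K_Z$ are written in a fixed basis of $N^1(Z)$, becomes a finite system of linear inequalities over the integer points of a bounded region, solved exactly as for $Z=\mathbb{F}_1$, $Z=\mathbb{P}^1\times\mathbb{P}^1$ in Section \ref{Standard3folds} and $Z=\mathrm{Bl}_2\mathbb{P}^2\times\mathbb{P}^1$ in Section \ref{Standard4folds}; numerical classes $A$ admitting no smooth irreducible representative must be discarded. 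In case (B.2), Lemma \ref{A',D'} shows that $A$ and $D$ are pulled back from $\mathbb{P}^2$, and the Fano condition becomes the explicit ampleness system on $\mathbb{P}^2$ of Proposition \ref{DoubleProp}. Each surviving datum produces a Fano $4$-fold whose invariants ($-K_X^4$, $h^0(-K_X)$, Betti numbers, contraction structure, etc.) I would compute from $-K_X\sim\phi^{*}(-K_Z)+G+\hat G$ and the intersection table of Remark \ref{Intersection table}.

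The lists coming from (B.1) and (B.2) overlap, and a single $X$ may also arise from one base via several pairs related by $\mathscr{C}_A(Z;A,D_1)\simeq\mathscr{C}_A(Z;A,D_2)$ or, in case (B.2), by the symmetry of Proposition \ref{SwitchProposition}; hence the essential step is to merge these presentations and count distinct deformation families, arriving at the $50$ entries of Table \ref{FinalTableRho5}. That every such $X$ has $\delta_X=2$ follows because $\delta_Z\in\{1,2\}$ by \cite[Lemma 5.1]{Del14} (as $\dim Z=3$), so Lemma \ref{deltaLemma} gives $2\le\delta_X\le3$, and $\delta_X=3$ is excluded by comparing with the known classification of Fano $4$-folds with $\rho=5$ and $\delta=3$ in \cite{CR22}, none of which appears in the table. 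Finally toricity is decided by Lemma \ref{toricLemma}, being a nontrivial product is read off from the contraction structure, and removing both classes leaves the stated $27$ families.

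The main difficulty is not a single hard argument but the completeness-and-non-redundancy bookkeeping: one must ensure the list of Fano $\mathbb{P}^1$-bundles in (B.1) is exhaustive, that for each base all admissible $(A,D)$ have been found, that numerical classes without a smooth irreducible representative are correctly excluded, and --- most delicately --- that distinct table entries are genuinely non-isomorphic (and non-deformation-equivalent), while entries obtained from different bases or different $(A,D)$ are correctly identified with one another. The comparison with \cite{CR22} for $\delta=3$ and with known Fano $4$-fold invariants serves as an independent check on the final count.
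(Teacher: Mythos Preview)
Your proposal is correct and follows essentially the same strategy as the paper: the case split (B.1)/(B.2) from Theorem \ref{theorem4folds}, the Szurek--Wi\'sniewski list for (B.1) combined with Proposition \ref{ConstructionA} on each base, the reduction of (B.2) to Proposition \ref{DoubleProp} via Lemma \ref{A',D'}, the $\delta_X=2$ argument via Lemma \ref{deltaLemma} and comparison with \cite{CR22}, and toricity via Lemma \ref{toricLemma}. The only place where the paper is more specific than your outline is the ``non-redundancy'' step you flag as delicate: the paper introduces two concrete tools (Remarks \ref{RemarkNegative} and \ref{ProductRemark}) to separate families sharing identical numerical invariants, rather than relying on invariants or contraction structure alone.
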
 
	
	We find that 25 of those 27 \emph{new} Fano 4-folds project onto a Fano $\mathbb{P}^1$-bundle $Z$ over a del Pezzo surface (see Section \ref{Case1}), while the other 2 project onto a Fano variety $Z$ that is the blowing up of a smooth point on a smooth 3-fold (see Section \ref{Case2}). 
	
	As a side comment, it is worth noticing that the Fano polytopes of the toric Fano 4-folds arising from Construction A  with $\rho=5$ are all of type $Q_n$ with $n\in\mathbb{N}$ (see \cite{Bat99}).

	\subsection{Case (B.1)}\label{Case1}
	In this section, we are going to list every Fano 4-fold $X=\mathscr{C}_A(Z;A,D)$ arising from a Fano variety $Z$ that has a $\mathbb{P}^1$-bundle structure $p:Z\to W$ over a del Pezzo surface (case (B.1) of Theorem \ref{theorem4folds}).  
	Since $\rho_Z=\rho_X-2=3$, we need $\rho_W=\rho_Z-1=2$. This implies that $W\simeq \mathbb{P}^1\times\mathbb{P}^1$ or $W\simeq \mathbb{F}_1$. 
	In their paper \cite{SW90}, M. Szurek and J. Wi\'sniewski studied Fano bundles of rank 2 on surfaces.
	From their analysis, we deduce that there are only seven families of Fano threefolds $Z$ that satisfy our assumptions: \#3-17, \#3-24, \#3-25, \#3-27, \#3-28, \#3-30, \#3-31. For each family, it is possible to describe the effective cone, as well as the nef cone. This allows us to find for each $Z$ all choices of $A$ and $D$ that satisfy the conditions in Proposition \ref{ConstructionA} that yield $X$ Fano. We will explain the details of such process in the following paragraphs.      
	
	\paragraph{1. \fbox{$Z=$Divisor of degree $(1,1,1)$ in $\mathbb{P}^1\times\mathbb{P}^1\times\mathbb{P}^2$ (\#3-17).}}
	Since $Z$ is an ample divisor, by the Lefschetz hyperplane theorem we have an isomorphism of the spaces of curves $N_1(\mathbb{P}^1\times\mathbb{P}^1\times\mathbb{P}^2)\simeq N_1(Z)$. Similarly to the case of a $(1,1,1,1)$-divisor in $(\mathbb{P}^1)^4$ (see Section \ref{General3folds}), it can be proved that the elementary contractions of $Z$ are the restrictions of the projections of $\mathbb{P}^1\times\mathbb{P}^1\times\mathbb{P}^2$ onto its factors. Call $H_1$, $H_2$ and $H_3$ the canonical generators of the nef cone of $\mathbb{P}^1\times\mathbb{P}^1\times\mathbb{P}^2$ and let $H_i^Z$ be their restrictions to $Z$, which generate $\Nef(Z)$. Observe that $p_{12}|_Z:Z\to \mathbb{P}^1\times\mathbb{P}^1$ is a $\mathbb{P}^1$-bundle ($\mathcal{O}(Z|_{p_{12}^{-1}(\{*\})})\simeq \mathcal{O}_{\mathbb{P}^2}(1)$).
	On the other hand $p_{23}|_Z:Z\to \mathbb{P}^1\times\mathbb{P}^2$ is a birational divisorial contraction which is the blow up of a curve. Let $E_1$ be its exceptional divisor. Then $-K_Z=p_{23}^{*}(-K_{\mathbb{P}^1\times\mathbb{P}^2})-E_1 $.
	Hence
	$E_1\equiv-H_1^Z+H_2^Z+H_3^Z$ (recall that $-K_Z=-K_{\mathbb{P}^1\times\mathbb{P}^1\times\mathbb{P}^2}|_Z-Z|_Z=H_1^Z+H_2^Z+2H_3^Z$).
	Similarly $E_2\equiv H_1^Z-H_2^Z+H_3^Z$ is the exceptional divisor of $p_{13}|_Z$. 
	
	Hence $\Nef(Z)=\mathbb{R}_{\ge 0}H_1^Z+\mathbb{R}_{\ge 0}H_2^Z+\mathbb{R}_{\ge 0}H_3^Z$ and $\Eff(Z)=\mathbb{R}_{\ge 0}H_1^Z+\mathbb{R}_{\ge 0}H_2^Z+\mathbb{R}_{\ge 0}E_1+\mathbb{R}_{\ge 0}E_2$.
	This is enough to apply Proposition \ref{ConstructionA} and find all possible choices of $A$, $D$ in $Z$ that yield a Fano 4-fold $X$ (see Table \ref{3-17}).
	\begin{small}
		\begin{center}
			\begin{longtable}{|c | c ||c| c |c|c|c|c|c|c|c|} 
				\hline
				$A$ & $D$ & \# &$c_1^4$ & $c_1^2c_2$&$h^0(-K_X)$&$h^{1.2}$&$h^{2,2}$ & $h^{1,3}$&Toric?&Product?  \\ 
				\hline\hline
				$H_3^Z$& $0/H_3^Z$ &1& 236&152&53&0&10&0&No&No\\
				\hline
				$2H_3^Z$ & $H_3^Z$ &2& 184&136&43&0&12&0&No&No\\
				\hline
				\caption{Choices of $A$, $D$ in $Z$= divisor (1,1,1) in $\mathbb{P}^1\times\mathbb{P}^1\times\mathbb{P}^2$.}
				\label{3-17}
			\end{longtable}
		\end{center}
	\end{small}
	\paragraph{2. \fbox{$Z=$Complete intersection of degree $(1,1,0)$ and $(0,1,1)$ in $\mathbb{P}^1\times\mathbb{P}^2\times\mathbb{P}^2$ (\#3-24).}}
	Let $p_i$ be the projection of $Z$ onto its $i$-th factor ($i=1,2,3$). We use the following notation for line bundles on $Z$: $\mathcal{O}(a,b,c):=p_1^{*}\mathcal{O}_{\mathbb{P}^1}(a)\otimes p_2^{*}\mathcal{O}_{\mathbb{P}^2}(b)\otimes p_3^{*}\mathcal{O}_{\mathbb{P}^2}(c)$. 
	
	First of all, notice that if $U_1=\mathcal{O}(1,1,0)$ and $U_2=\mathcal{O}(0,1,1)$, then $U_1\simeq \mathbb{F}_1\times\mathbb{P}^2$ and $U_2\simeq \mathbb{P}^1\times P$,
	where $P$ is a divisor of degree (1,1) in $\mathbb{P}^2\times\mathbb{P}^2$, i.e.~is the projective bundle $\mathbb{P}_{\mathbb{P}^2}(T_{\mathbb{P}^2})$. 
	
	Consider the projection $p_{12}:\mathbb{P}^1\times\mathbb{P}^2\times\mathbb{P}^2\to\mathbb{P}^1\times\mathbb{P}^2$. Clearly 
	$p_{12}(U_1)=\mathbb{F}_1\subset \mathbb{P}^1\times\mathbb{P}^2$ and $p_{12}(U_2)=\mathbb{P}^1\times p_{1}(P)=\mathbb{P}^1\times\mathbb{P}^2$. 
	Hence $p_{12}(Z)=\mathbb{F}_1\subset \mathbb{P}^1\times\mathbb{P}^2$, and $p_{12}|_{Z}: Z\to \mathbb{F}_1$ is a $\mathbb{P}^{1}$-bundle. It is an extremal contraction of fiber type that contracts all curves in the class of a fiber of the projective bundle $p_1:P\to \mathbb{P}^2$, i.e.~every line of the following form: $\{*\}\times\{*\}\times \mathcal{O}_{\mathbb{P}^2}(1)$ contained in $Z$. 
	
	Consider now the projection $p_{13}: \mathbb{P}^1\times\mathbb{P}^2\times\mathbb{P}^2\to\mathbb{P}^1\times\mathbb{P}^2$. Clearly $p_{13}|_{U_1}:\mathbb{F}_1\times\mathbb{P}^2\to\mathbb{P}^1\times\mathbb{P}^2$ coincides with the classical $\mathbb{P}^1$-bundle from $\mathbb{F}_1$ to $\mathbb{P}^1$ on the first component, and is the identity of $\mathbb{P}^2$ on the second component. We have $
	p_{13}(U_1)=\mathbb{P}^1\times\mathbb{P}^2$, and $
	p_{13}(U_2)=\mathbb{P}^1\times p_2(P)=\mathbb{P}^1\times\mathbb{P}^2$.
	Therefore $p_{13}(Z)=\mathbb{P}^1\times\mathbb{P}^2$, and $p_{13}|_{Z}$ is a birational divisorial extremal contraction that contracts curves in the class of $\{*\}\times\mathcal{O}_{\mathbb{P}^2}(1)\times\{*\}$, contained in $Z$.
	
	Finally, consider the projection $p_{23}: \mathbb{P}^1\times\mathbb{P}^2\times\mathbb{P}^2\to\mathbb{P}^2\times\mathbb{P}^2$. The restriction $p_{23}|_{U_1}:\mathbb{F}_1\times\mathbb{P}^2\to\mathbb{P}^2\times\mathbb{P}^2$ concides with the blowing up $\mathbb{F}_1\to\mathbb{P}^2$ on the first component and the identity of $\mathbb{P}^2$ on the second component. We have
	$p_{23}(U_1)=\mathbb{P}^2\times\mathbb{P}^2$ and $p_{23}(U_2)=P$.   
	Hence $p_{23}(Z)=P$ and $p_{23}|_{Z}:Z\to P$ is a birational divisorial extremal contraction that contracts all curves in the class of $\mathbb{P}^1\times\{*\}\times\{*\}$, contained in $Z$.
	
	So we have  that every extremal contraction of $Z$ is the restriction of an extremal contraction of $\mathbb{P}^1\times\mathbb{P}^2\times\mathbb{P}^2$, hence the restriction $i_{*}:\Nef(\mathbb{P}^1\times\mathbb{P}^2\times\mathbb{P}^2)\to \Nef(Z)$ is an isomorphism. Let $H_1:=\mathcal{O}(1,0,0)$, $H_2:=\mathcal{O}(0,1,0)$, $H_3:=\mathcal{O}(0,0,1)$ be the generators of the nef cone in $\mathbb{P}^1\times\mathbb{P}^2\times\mathbb{P}^2$, and $H_1^Z$, $H_2^Z$, $H_3^Z$ their restrictions to $Z$.
	
	Call $E_1$ and $E_2$ the exceptional divisor respectively of $p_{12}$ and $p_{13}$. It is not difficult to prove that $E_1\equiv -H_1^Z+H_2^Z$ and $E_2\equiv H_1^Z-H_2^Z+H_3^Z$. We have $\Nef(Z)=\mathbb{R}_{\ge 0}H_1^Z+\mathbb{R}_{\ge 0}H_2^Z+\mathbb{R}_{\ge 0}H_3^Z$ and $
	\Eff(Z)=\mathbb{R}_{\ge 0}H_1^Z+\mathbb{R}_{\ge 0}E_1+\mathbb{R}_{\ge 0}E_2$.
	
	Recall that, by the adjunction formula, $-K_Z=-K_{\mathbb{P}^1\times\mathbb{P}^2\times\mathbb{P}^2}|_Z -\det(\mathcal{N}_{Z/\mathbb{P}^1\times\mathbb{P}^2\times\mathbb{P}^2})= 2H_1^Z+3H_2^Z+3H_3^Z-(H_1^Z+2H_2^Z+H_3^Z)=H_1^Z+H_2^Z+2H_3^Z$. This is enough to apply Proposition \ref{ConstructionA} and find all possible choices of $A$, $D$ in $Z$ that yield a Fano 4-fold $X$. They are listed in Table \ref{3-24}.
	\begin{small}
		\begin{center}
			\begin{longtable}{|c | c ||c| c |c|c|c|c|c|c|c|} 
				\hline
				$A$ & $D$ &\#& $c_1^4$ &$c_1^2c_2$&$h^0(-K_X)$& $h^{1,2}$&$h^{2,2}$&$h^{1,3}$&Toric?&Product? \\ 
				\hline\hline
				$H_3^Z$& $0/H_3^Z$ &1& 278&164&61&0&9&0&No&No\\
				\hline
				$2H_3^Z$ & $H_3^Z$ &2& 220&148&50&0&10&0&No&No\\
				\hline
				\caption{Choices of $A$, $D$ in $Z$=complete intersection of degree (1,1,0) and (0,1,1) in $\mathbb{P}^1\times\mathbb{P}^2\times\mathbb{P}^2$.}
				\label{3-24}
			\end{longtable}
		\end{center}
	\end{small}
	\paragraph{3. \fbox{$Z=\mathbb{P}_{\mathbb{P}^1\times\mathbb{P}^1}(\mathcal{O}(-1,0)\oplus\mathcal{O}(0,-1))$ (\#3-25).}} 
	Observe that $Z$ is isomorphic to the blow up of $\mathbb{P}^3$ along two disjoint lines $l_1$ and $l_2$. Call $E_1$ and $E_2$ the exceptional divisors over the two blown up lines in $\mathbb{P}^3$, and call $H$ the transform of a general hyperplane in $\mathbb{P}^3$.   
	
	In $N_1(Z)$, let $\mathbf{e_1}$ and $\mathbf{e_2}$ be the classes of one dimensional fibers respectively of $E_1$ and $E_2$, and let $\mathbf{f}$ be the class of the transform of a generic line intersecting both the blown up lines non trivially. They generate the cone of effective 1-cycles $\NE(Z)$ as well as $N_1(Z)$. 
	
	The classes of $H$, $E_1$, $E_2$ generate $N^1(Z)$ and \sloppy${\Nef(Z)=\mathbb{R}_{\ge 0}[H]+\mathbb{R}_{\ge 0}[H-E_1]+\mathbb{R}_{\ge 0}[H-E_2]}$. Moreover the effective cone is \sloppy${\Eff(Z)=\mathbb{R}_{\ge 0}[H-E_1]+\mathbb{R}_{\ge 0}[H-E_2]+\mathbb{R}_{\ge 0}[E_1]+\mathbb{R}_{\ge 0}[E_2]}$ (see \cite{DPU17}).
	Recall that $-K_Z\equiv2H+(H-E_1)+(H-E_2)$. This is enough to use Proposition \ref{ConstructionA} and find all possible choices of $A$ and $D$ that yield a Fano 4-fold $X$. They are listed in Table \ref{3-25}.
	\begin{small}
		\begin{center}
			\begin{longtable}{|c|c||c|c|c|c|c|c|c|c|c|}
				\hline
				$A$ & $D$ &\#& $c_1^4$ & $c_1^2c_2$ & $h^0(-K_X)$ & $h^{1,2}$& $h^{2,2}$&$h^{1,3}$&Toric?&Product?\\
				\hline\hline
				\multirow{2}{*}{$E_1$} 
				&$H$/$-H+E_1$ &1& 331 & 178 &71 &0&8&0&$Q_{12}$&No\\
				\cline{2-11}
				& $E_1$/$0$ &2& 310 & 172 & 67 &0&8&0&$Q_{16}$&No\\
				\hline\hline
				$H$ & $H$/$0$ &3& 283 & 166 &62&0&9&0&No&No \\
				\hline\hline 
				$2H$
				& $H$ &4& 214 & 148 &49&0&12&0&No&No\\
				\hline
				\caption{Choices of $A$, $D$ in $Z=\mathbb{P}_{\mathbb{P}^1\times\mathbb{P}^1}(\mathcal{O}(-1,0)\oplus\mathcal{O}(0,-1))$.}
				\label{3-25}
			\end{longtable}
		\end{center}
	\end{small}
	\paragraph{4. \fbox{$Z=\mathbb{P}^1\times\mathbb{P}^1\times\mathbb{P}^1$ (\#3-27).}}
	Call
	$H_1:=\{*\}\times \mathbb{P}^1\times \mathbb{P}^1$, $H_2:=\mathbb{P}^1\times\{*\}\times\mathbb{P}^1$, $H_3:=\mathbb{P}^1\times\mathbb{P}^1\times \{*\}$. Both
	$\Nef(Z)$ and $\Eff(Z)$ are generated by $H_1,H_2,H_3$. 
	Recall that \sloppy ${-K_Z\equiv 2H_1+2H_2+2H_3}$. This is enough to apply Proposition \ref{ConstructionA} and find all possible choices of $A$ and $D$ (denoted by their coordinates with respect to $H_1$, $H_2$, $H_3$). They are listed in Table \ref{3-27}.
	\begin{small}
		\begin{center}
			\begin{longtable}{|c|c||c|c|c|c|c|c|c|c|c|}
				\hline
				$A$ & $D$ &\#& $c_1^4$ & $c_1^2c_2$ & $h^0(-K_X)$& $h^{1,2}$&$h^{2,2}$&$h^{1,3}$&Toric?&Product?\\
				\hline\hline
				\multirow{6}{*}{(0,0,1)} & (-1,-1,0)/(1,1,1) &1& 394 & 196 & 83 &0&8&0&$Q_{3}$&No\\
				\cline{2-11}
				& (-1,0,0)/(1,0,1) &2& 352 & 184 & 75 &0&8&0&$Q_{8}$&(\#4-11)$\times\mathbb{P}^1$\\
				\cline{2-11}
				& (0,0,0)/(0,0,1) &3& 336 & 180 &  72 &0&8&0&$Q_{11}$&(\#4-10)$\times\mathbb{P}^1$\\
				\cline{2-11} 
				& (-1, -1, 1)/(1,1,0) &4& 330 & 180 & 71 &0&8&0&$Q_{13}$&No \\
				\cline{2-11}
				& (-1,0,1)/(1,0,0) &5& 320 & 176 & 69 &0&8&0&$Q_{15}$&(\#4-9)$\times\mathbb{P}^1$\\
				\cline{2-11}
				& (0,1,0)/(0,-1,1) &6& 320 & 176 & 69 &0&8&0&$Q_{15}$&(\#4-9)$\times\mathbb{P}^1$\\
				\cline{2-11}
				& (-1,1,0)/(1,-1,1) &7& 310 & 172 & 67 &0&8&0&$Q_{16}$&No\\
				\hline\hline
				\multirow{5}{*}{(0,1,1)} & (-1,0,0)/(1,1,1) &8& 330 & 180 &71 &0&8&0&No&No\\
				\cline{2-11}
				& (0,0,0)/(0,1,1) &9& 304 & 172 &66&0&8&0&No&(\#4-8)$\times\mathbb{P}^1$\\
				\cline{2-11}
				& (0,0,1)/(0,1,0) &10& 288 & 168 &63&0&8&0&No&(\#4-7)$\times\mathbb{P}^1$\\
				\cline{2-11}
				& (-1,0,1)/(1,1,0) &11& 288 & 168 &63&0&8&0&No&No\\
				\cline{2-11}
				& (-1,1,1)/(1,0,0) &12& 278 & 164 &61&0&8&0&No&No\\
				
				\hline\hline 
				\multirow{3}{*}{(0,1,2)} & (-1,0,1)/(1,1,1) &13& 266 & 164 &59 &0&8&0&No&No\\
				\cline{2-11}
				& (0,0,1)/(0,1,1) &14& 256 & 160 &57&0&8&0&No&(\#4-5)$\times\mathbb{P}^1$\\
				\cline{2-11}
				& (-1,1,1)/(1,0,1) &15& 246 & 156 &55&0&8&0&No&No\\
				\hline\hline
				\multirow{2}{*}{(0,2,2)} & (-1,1,1)/(1,1,1) &16& 224 & 152 & 51&1&8&0&No&No\\
				\cline{2-11}
				& (0,1,1) &17& 224 & 152 &51&1&8&0&No&(\#4-2)$\times\mathbb{P}^1$\\
				\hline\hline
				\multirow{2}{*}{(1,1,1)} & (0,0,0)/(1,1,1) &18& 282 & 168 & 62&0&10&0&No&No\\
				\cline{2-11}
				& (0,0,1)/(1,1,0) &19& 256 & 160 & 57&0&10&0&No&No\\
				\hline\hline
				\multirow{2}{*}{(1,1,2)} & (0,0,1)/(1,1,1) &20& 234 & 156 &53& 0&12&0&No&No\\
				\cline{2-11}
				& (0,1,1)/(1,0,1) &21& 224 & 152 &51& 0&12&0&No&No\\
				\hline\hline 
				(1,2,2) & (0,1,1)/(1,1,1) &22& 202 & 148 &47&0&16&0&No&No\\
				\hline\hline 
				(2,2,2) & (1,1,1) &23& 180 & 144 &43&0&26&1&No&No\\
				\hline
				\caption{Choices of $A$, $D$ in $Z=\mathbb{P}^1\times\mathbb{P}^1\times\mathbb{P}^1$.}
				\label{3-27}
			\end{longtable}
		\end{center}
	\end{small}
	\paragraph{5. \fbox{$Z=\mathbb{F}_1\times\mathbb{P}^1$ (\#3-28).}}
	Call
	$U_1:=\mathbf{f}\times \mathbb{P}^1$, $U_2:=\mathbf{e}\times\mathbb{P}^1$, $U_3:=\mathbb{F}_1\times \{*\}$, so that
	$N^1(Z)$ is generated by $U_1,U_2,U_3$. 
	Call $u_1:=\mathbf{f}\times\{*\}$, $u_2:=\mathbf{e}\times \{*\}$, $u_3:=\{*\}\times\mathbb{P}^1$. Then $N_1(Z)$ is generated by $u_1,u_2,u_3$.
	We have $\Eff(Z)=\mathbb{R}_{\ge 0}[U_1]+\mathbb{R}_{\ge 0}[U_2]+\mathbb{R}_{\ge 0}[U_3]$.
	
	It can be seen that $\NE(Z)$ is generated by $u_1,u_2,u_3$ and $\Nef(Z)=\mathbb{R}_{\ge 0}[U_1]+\mathbb{R}_{\ge 0}[U_1+U_2]+\mathbb{R}_{\ge 0}[U_3]$. 
	Moreover, we have $-K_Z= p_{1}^{*}(-K_{\mathbb{F}_1})+p_2^{*}(-K_{\mathbb{P}^1})\equiv 3U_1+2U_2+2U_3$.
	This is enough to apply Proposition \ref{ConstructionA}, which yields the choices for $A$ and $D$ (denoted by their coordinates with respect to the the base of $N^1(Z)$ given by $U_1$, $U_2$, $U_3$) listed in Table \ref{3-28}.
	\begin{small}
		\begin{center}
			\begin{longtable}{|c|c||c|c|c|c|c|c|c|c|c|}
				\hline
				$A$ & $D$ & \#&$c_1^4$ & $c_1^2c_2$ & $h^0(-K_X)$ &$h^{1,2}$&$h^{2,2}$&$h^{1,3}$&Toric?&Product?\\
				\hline\hline
				\multirow{3}{*}{(0,0,1)} & (-1,-1,0)/(1,1,1) &1& 373 & 190 &79&0&8&0&$Q_{5}$&No\\
				\cline{2-11} 
				& (0,0,0)/(0,0,1) &2& 336 & 180 &72&0&8&0&$Q_{10}$&$\mathbb{F}_1\times\text{Bl}_2\mathbb{P}^2$\\
				\cline{2-11}
				& (-1, -1, 1)/(1,1,0) &3& 325 & 178 &70&0&8&0&$Q_{14}$&No\\
				\hline\hline
				\multirow{6}{*}{(0,1,0)} & (-1,0,-1)/(1,1,1) &4& 405 & 198 &85&0&8&0&$Q_{4}$&No \\
				\cline{2-11}
				& (-1,0,0)/(1,1,0) &5& 368 & 188 &78&0&8&0&$Q_{6}$&(\#4-12)$\times\mathbb{P}^1$\\
				\cline{2-11}
				& (0,1,1)/(0,0,-1) &6& 363 & 186 &77&0&8&0&$Q_{7}$&No\\
				\cline{2-11}
				& (0,1,0)/(0,0,0) &7& 352 & 184 &75&0&8&0&$Q_{8}$&(\#4-11)$\times\mathbb{P}^1$\\
				\cline{2-11}
				& (0,1,-1)/(0,0,1) &8& 341 & 182 &73&0&8&0&$Q_{9}$&No\\
				\cline{2-11}
				& (-1,0,1)/(1,1,-1) &9& 331 & 178 &71&0&8&0&$Q_{12}$&No\\
				\hline\hline 
				\multirow{3}{*}{(1,1,0)} & (0,0,-1)/(1,1,1) &10& 341 & 182 &85&0&8&0&$Q_{9}$&No\\
				\cline{2-11}
				& (0,0,0)/(1,1,0) &11& 320 & 176 &78&0&8&0&$Q_{15}$&(\#4-9)$\times\mathbb{P}^1$\\
				\cline{2-11}
				& (0,0,1)/(1,1,-1) &12& 299 & 170 &71&0&8&0&$Q_{17}$&No\\
				\hline\hline
				(1,1,1) & (0,0,0)/(1,1,1) &13& 293 & 170 &64&0&9&0&No&No\\
				\hline\hline
				(1,1,2) & (0,0,1)/(1,1,1) &14& 245 & 158 &55&0&10&0&No&No\\
				\hline\hline
				\multirow{2}{*}{(2,2,0)} & (1,1,-1)/(1,1,1) &15& 256 & 160 &57&0&8&0&No&No\\
				\cline{2-11}
				& (1,1,0) &16& 256 & 160 &57& 0&8&0&No&(\#4-4)$\times\mathbb{P}^1$\\
				\hline\hline 
				(2,2,1) & (1,1,0)/(1,1,1) &17&  229 & 154 &52&0&12&0&No&No\\
				\hline\hline 
				(2,2,2) & (1,1,1) &18& 202 & 148 &47&0&16&0&No&No\\
				\hline
				\caption{Choices of $A$, $D$ in $Z=\mathbb{F}_1\times\mathbb{P}^1$.}
				\label{3-28}
			\end{longtable}
		\end{center}
	\end{small}
	\paragraph{6. \fbox{$Z=\mathbb{P}_{\mathbb{F}_1}(\mathcal{O}\oplus \mathcal{O}(-\mathbf{l}))$ (\#3-30).}}
	Call $\pi\colon Z\to \mathbb{F}_1$ the projection. Call $H:=\pi^{*}\mathbf{l}$, $\tilde{E}:=\pi^{*}\mathbf{e}$ and $E$ the negative section of $\pi$. Call $\mathbf{f}$ the class in $N_1(Z)$ of the generic fiber of $\pi$, $\mathbf{l}$ the class of $\pi^{-1}(\mathbf{l})\cap E$ and $\mathbf{e}$ the class of $\pi^{-1}(\mathbf{e})\cap E$. We have that $N^1(Z)$ is generated by the classes of $H$, $\tilde{E}$ and $E$. Moreover NE$(Z)$ is generated by $\mathbf{f}$, $\mathbf{l}-\mathbf{e}$ and $\mathbf{e}$.
	The following intersection table holds:
	\begin{center}
		\begin{tabular}{|c|c|c|c|}
			\hline
			$\cdot$ & $\mathbf{f}$ & $\mathbf{l}$ & $\mathbf{e}$ \\
			\hline
			$H$ & 0 & 1 & 0\\
			\hline 
			$\tilde{E}$ & 0 & 0 & -1\\
			\hline
			$E$ & 1 & -1 & 0\\
			\hline 
		\end{tabular}
	\end{center}  
	Therefore we have \sloppy  ${\Nef(Z)=\mathbb{R}_{\ge 0}[H]+\mathbb{R}_{\ge 0}[H-\tilde{E}]+\mathbb{R}_{\ge 0}[H+E]}$. Moreover, the effective cone is \sloppy ${\Eff(Z)=\mathbb{R}_{\ge 0}[E]+\mathbb{R}_{\ge 0}[H-\tilde{E}]+\mathbb{R}_{\ge 0}[\tilde{E}]}$. 
	
	Recall that ${-K_Z= 2c_1(\mathcal{O}_Z(1))+\pi^{*}(-K_{\mathbb{F}_1}-\det(\mathcal{O}\oplus\mathcal{O}(-\mathbf{l})))\equiv 4H-\tilde{E}+2E}$. This is enough to apply Proposition \ref{ConstructionA} and obtain the list of admissible choices of $A$ and $D$ in $Z$ contained in Table \ref{3-30}. 
	\begin{small}
		\begin{center}
			\begin{longtable}{|c|c||c|c|c|c|c|c|c|c|c|}
				\hline
				$A$ & $D$ & \#& $c_1^4$ & $c_1^2c_2$ & $h^0(-K_X)$ & $h^{1,2}$& $h^{2,2}$&$h^{1,3}$&Toric?&Product?\\
				\hline\hline
				\multirow{2}{*}{$E$} 
				&$-H$/$H+E$ &1& 405 & 198 &85&
				0&8&0&$Q_2$&No\\
				\cline{2-11}
				& $E$/$0$ &2& 373 & 190 & 79 &0 &8&0&$Q_{5}$&No\\
				\hline\hline
				$H+E$ & $H+E$/$0$ &3& 325 & 178 &70&0&8&0&$Q_{14}$&No \\
				\hline\hline 
				$2H+2E$
				& $H+E$ &4& 250 & 160 &56&0&10&0&No&No\\
				\hline
				\caption{Choices of $A$, $D$ in Z=$\mathbb{P}_{\mathbb{F}_1}(\mathcal{O}\oplus\mathcal{O}(-l))$.}
				\label{3-30}
			\end{longtable}
		\end{center}
	\end{small}
	\paragraph{7. \fbox{$Z=\mathbb{P}_{\mathbb{P}^1\times\mathbb{P}^1}(\mathcal{O}\oplus \mathcal{O}(-1,-1))$ (\#3-31).}}
	Call $\pi\colon  Z\to \mathbb{P}^1\times\mathbb{P}^1$ the projection, $H_1:= \pi^{-1}(\{*\}\times\mathbb{P}^1)$, $H_2:= \pi^{-1}(\mathbb{P}^1\times\{*\})$ and $E$ the section corresponding to the quotient $\mathcal{O}\oplus\mathcal{O}(-1,-1)\twoheadrightarrow \mathcal{O}(-1,-1)$. Observe that $\mathcal{O}_Z(E)=\mathcal{O}_Z(1)$, and $H_1$, $H_2$, $E$ generate $N^1(Z)$. 
	
	In $N_1(Z)$, let $\mathbf{f}$ be the class of a one dimensional fiber of $\pi$, and let $\mathbf{e_1}$ and $\mathbf{e_2}$ be the copies in $E$ of the two rulings in $\mathbb{P}^1\times\mathbb{P}^1$. They generate $N_1(Z)$. 
	
	We have
	$\NE(Z)=\mathbb{R}_{\ge 0}\mathbf{f}+\mathbb{R}_{\ge 0}\mathbf{e_1}+\mathbb{R}_{\ge 0}\mathbf{e_2}$, \sloppy${\Nef(Z)=\mathbb{R}_{\ge 0}[H_1]+\mathbb{R}_{\ge 0}[H_2]+\mathbb{R}_{\ge 0}[H_1+H_2+E]}$. 
	Moreover the effective cone is $\Eff(Z)=\mathbb{R}_{\ge 0}[H_1]+\mathbb{R}_{\ge 0}[H_2]+\mathbb{R}_{\ge 0}[E]$. 
	
	Recall that 
	\[-K_Z= 2\mathcal{O}_Z(1)+\pi^{*}(-K_{\mathbb{P}^1\times\mathbb{P}^1}-\det(\mathcal{O}\oplus\mathcal{O}(-1,-1)))\equiv 3H_1+3H_2+2E.
	\]This is enough to be able to apply Proposition \ref{ConstructionA}, from which it follows that the only (up to symmetries) admissible choices of $A$ and $D$ in $Z$ are the ones listed in Table \ref{3-31}.
	\begin{small}
		\centering
		\begin{longtable}{|>{\centering\arraybackslash}m{2.5 cm}|>{\centering\arraybackslash}m{2.5 cm}||c|c|c|c|c|c|c|>{\centering\arraybackslash}m{0.9 cm}|>{\centering\arraybackslash}m{1.3 cm}|}
			\hline
			$A$ & \centering $D$ &\# & $c_1^4$ & $c_1^2c_2$ & $h^0(-K_X)$ & $h^{1,2}$& $h^{2,2}$&$h^{1,3}$& Toric?& Product?\\
			\hline\hline
			\multirow{3}{*}{$E$} & $-H_1-H_2$/
			$H_1+H_2+E$ & 1 & 442 & 208 & 92 &0&8&0&$Q_1$&No\\
			\cline{2-11}
			& $-H_1$/${H_1+E}$ &2& 405 & 198 &85&0&8&0&$Q_{4}$&No\\
			\cline{2-11}
			& $0$/$E$ &3& 394 & 196 &83&0&8&0&$Q_{3}$&No\\
			\hline\hline
			$H_1+H_2+E$ & $0$/${H_1+H_2+E}$ &4& 330 & 180 &71&0&8&0&$Q_{13}$&No \\
			\hline\hline 
			$2H_1+2H_2+2E$
			& $H_1+H_2+E$ &5& 244 & 160 &55&0&12&0&No&No\\
			\hline
			\caption{Choices of $A$, $D$ in Z=$\mathbb{P}_{\mathbb{P}^1\times\mathbb{P}^1}(\mathcal{O}\oplus\mathcal{O}(-1,-1))$}
			\label{3-31}
		\end{longtable}
	\end{small}
	
	\paragraph{Ambiguities}
	In Tables \ref{3-17}-\ref{3-31} there are some families of Fano 4-folds arising from different constructions of type A that have the same invariants. We are going to prove that all non-toric Fano 4-folds listed in said tables are distinct. The study of toric 4-folds can be done by comparing their descriptions with the ones in \cite{Bat99}. 
	
	We begin by making some general observations that will be useful in solving some of the ambiguities.
	
	\begin{remark}\label{RemarkNegative}
		Let $X$ be a smooth Fano 4-fold arising from two distinct constructions of type A, $\mathscr{C}_A(Z;A,D)\simeq X\simeq\mathscr{C}_A(Z';A',D')$, with $Z\not\simeq Z'$. Suppose that in $Z$ there are no effective divisors that have negative intersection with every curve contained in their support. Then both $D'$ and $A'-D'$ are not ample.
	\end{remark}	 
	\begin{proof}	 
		Suppose by contradiction that $D'$ is ample. In $X$ there exists an effective divisor $G'\simeq Z'$ such that $\mathcal{N}_{G'/X}\simeq \mathcal{O}_{Z'}(-D')$. In $X$ there are also two disjoint copies of $Z$, which are $G$ and $\hat{G}$. Now, since $Z\not\simeq Z'$, $G'$ does not coincide with $G$ nor with $\hat{G}$. If $G'\cap G\ne \emptyset$, $G'|_{G}$ would be an effective divisor in $G\simeq Z$ that has negative intersection with every curve contained in its support, which is impossible by our assumptions. Therefore $G'\cap G=\emptyset$ and similarly it can be proved that $G'\cap \hat{G}=\emptyset$. Now, consider the blow up $\sigma\colon X\to Y$ and the $\mathbb{P}^1$-bundle $\pi\colon Y\to Z$ given by $\mathscr{C}_{A}(Z;A,D)$. Since $G'\cap \hat{G}=\emptyset$, $G'$ does not coincide with the exceptional divisor of $\sigma$ and $\sigma(G')$ is an effective divisor in $Y$, disjoint from $G_Y$ and $\hat{G}_Y$. However, from the fact that $\Pic(Y)=\mathbb{Z}\hat{G}_Y+\pi^{*}(\Pic(Z))$, it follows that this is impossible. Similar arguments can be made under the assumption that $A'-D'$ is ample.    
	\end{proof}
	
	\begin{remark}\label{ProductRemark}
		Let $X\simeq\mathscr{C}_A(Z;A,D)$ be a smooth $n$-dimensional variety ($n\ge 3$). Suppose $X\simeq \mathbb{P}^1\times X_0$, where $X_0$ is some smooth $(n-1)$-dimensional variety. Then the blow up  $\sigma\colon X\to Y$ has to be of the following form:
		$Y\simeq \mathbb{P}^1\times Y_0$ for some smooth $(n-1)$-dimensional variety $Y_0$, and the blown-up locus $A_Y$ must be isomorphic to $\mathbb{P}^1\times A_0$, where $A_0$ is a smooth $(n-3)$-dimensional variety lying in $Y_0$, so that $X_0\simeq \Bl_{A_0}(Y_0)$.  
	\end{remark}
	
	We now list all ambiguities.

	There are pairs of families with identical numerical invariants, where one family consists of varieties that admit a clear product structure  $W\times\mathbb{P}^1$, with $W$ a Fano 3-fold, while the other family does not. This is the case for the following couples: 
	\begin{center}
		(\#15 Table \ref{3-28}, \#16 Table \ref{3-28}) ; (\#10 Table \ref{3-27}, \#11 Table \ref{3-27}) ; (\#16 Table \ref{3-27}, \#17 Table \ref{3-27}).
	\end{center}
	Families \#16 in Table \ref{3-28}, \#10 in Table \ref{3-27} and \#17 in Table \ref{3-27} are clearly products, as it emerges from the data of Construction A that yield them.  
	If $X$ is a member of one of the remaining families, we see that in all their constructions, $Y$ and $A_Y$  are not in the right form for $X$ to be a product (see Remark \ref{ProductRemark}). Thus in all three pairs listed above, each family is distinct. 
	
	The last ambiguous case, is that of the couple (\#22 Table \ref{3-27} , \#18 Table \ref{3-28}). Let $X$ be a member of family \#18 in Table \ref{3-28}. We have $Z=\mathbb{F}_1\times\mathbb{P}^1$, $A\equiv 2U_1+2U_2+2U_3$, and $D\equiv U_1+U_2+U_3$. Let $X'$ be a member of family \#22 in Table \ref{3-27}. We have $Z'=\mathbb{P}^1\times\mathbb{P}^1\times\mathbb{P}^1$, $A'\equiv H_1+2H_2+2H_3$, and $D'\equiv H_1+H_2+H_3$. We see that in $Z$ there are no effective divisors having negative intersections with all curves contained in their support and $D'$ is ample in $Z'$. Therefore we can apply Remark \ref{RemarkNegative} and conclude that the two families are distinct.    
	
	\subsection{Case (B.2)}\label{Case2}
	In this section, we are going to list every Fano 4-fold $V$ obtained through the iteration of two constructions of type A, starting from $\mathbb{P}^2$ (case (B.2) of Theorem \ref{theorem4folds}). Observe that by the proof of Theorem \ref{theorem4folds}, we may just look for all choices of $A$ and $D$ in \#3-14, \#3-19, \#3-26 that satisfy the conditions in Proposition \ref{ConstructionA}, similarly to what we did for case (B.1). However it is much more straightforward to use the iterative Construction A with base $\mathbb{P}^2$, since all Fano 4-folds we are looking for arise from it.  We refer to Section \ref{Recursion} for the notation used in this section. By Lemma \ref{A',D'}, recall that $V$ is completely determined by the choice of four Cartier divisors in $Z:=\mathbb{P}^2$: $A$, $A'$ ( two smooth irreducible surfaces in $Z$) and $D$, $D'$. Now, let $a,d,a',d'$ be integers such that 
	$A=\mathcal{O}_{\mathbb{P}^2}(a)$, $A'=\mathcal{O}_{\mathbb{P}^2}(a')$
	$D=\mathcal{O}_{\mathbb{P}^2}(d)$
	$D'=\mathcal{O}_{\mathbb{P}^2}(d')$.
	Observe that, being $A$ and $A'$ effective, we need $a, a'\ge 1$. 
	In this setting, the conditions for $V$ to be Fano listed in Proposition \ref{DoubleProp} read 
	\begin{align}\label{ConditionsP^2}
		\begin{cases}
			3-(a'-d')-(a-d)\ge 1\\
			3-(a'-d')-d\ge 1\\
			3-d'-(a-d)\ge 1\\
			3-d'-d\ge 1\\
			3-(a'-d')\ge 1\\
			3-d'\ge 1
		\end{cases}
	\end{align} 
	Up to exchanging $d$ with $a-d$ or $d'$ with $a'-d'$, the only values of $(a,d,a',d')$ that satisfy (\ref{ConditionsP^2}) are the following: $\{(1,1,1,1),(1,1,2,1),(2,1,1,1),(2,1,2,1)\}$. Observe that   the choices of $(a,d,a',d')=(1,1,2,1)$ and $(a,d,a',d')=(2,1,1,1)$ yield the same family of Fano 4-folds by Proposition \ref{SwitchProposition}. We thus obtain three distinct families, listed in Table \ref{DoubleTable}.

	\begin{table}[H]\small
		\begin{center}
			\begin{tabular}{|c|c||c|c||c|c|c|c|c|c|c|}
				\hline
				$\mathcal{O}(A)$ & $\mathcal{O}(D)$ & $\mathcal{O}(A')$ & $\mathcal{O}(D')$ &\#& $c_1^4$ & $c_1^2c_2$ & $h^0(-K_X)$ & $h^{2,2}$ & Toric?&Product?\\
				
				\hline\hline
				\multirow{2}{*}{$\mathcal{O}_{\mathbb{P}^2}(1)$} & \multirow{2}{*}{$\mathcal{O}_{\mathbb{P}^2}(1)/\mathcal{O}_{\mathbb{P}^2}$}& $\mathcal{O}_{\mathbb{P}^2}(1)$ &$\mathcal{O}_{\mathbb{P}^2}(1)/\mathcal{O}_{\mathbb{P}^2}$ &1& 310 & 172 & 67 & 9 & Yes\tablefootnote{number 108 in \cite{Bat99}, \S4.}&No\\
				\cline{3-11}
				& &  $\mathcal{O}_{\mathbb{P}^2}(2)$ & $\mathcal{O}_{\mathbb{P}^2}(1)$ &2& 252 & 156 & 56 & 10 &No&No\\
				\hline 
				$\mathcal{O}_{\mathbb{P}^2}(2)$ & $\mathcal{O}_{\mathbb{P}^2}(1)$&   $\mathcal{O}_{\mathbb{P}^2}(2)$ & $\mathcal{O}_{\mathbb{P}^2}(1)$ &3&200 & 140 & 46 & 12 & No&No\\
				\hline 
			\end{tabular}
			\caption{Choices of $A$, $D$, $A'$, $D'$ in $\mathbb{P}^2$ that yield a Fano variety through a composition of two constructions of type A. All resulting Fano 4-folds have $h^{1,2}$=$h^{1,3}=0$.}
			\label{DoubleTable}
		\end{center}
	\end{table}

	\section{Fano 4-folds arising from Construction A with $\rho=4$ and $\delta=2$}\label{Standard4foldsRho4}
	According to Theorem \ref{theorem4folds}, there are four possibilities for a Fano 4-fold $X$ arising from Construction A with $\rho_X=4$. 
	\begin{itemize}
		\item[(A.1)] There exists a $\mathbb{P}^1$-bundle $Z\to \mathbb{P}^2$.
		\item[(A.2)] There exists a fibration $Z\to \mathbb{P}^1$ whose generic fiber is isomorphic to $\mathbb{P}^1\times\mathbb{P}^1$ and whose singular fibers are isomorphic to a quadric cone $\mathcal{Q}\subset\mathbb{P}^3$. 
		\item[(A.3)] There exists a $\mathbb{P}^2$-bundle $Z\to \mathbb{P}^1$. 
		\item[(A.4)] $Z\simeq \text{Bl}_{p}\mathcal{Q}_3$, where $\mathcal{Q}_3$ is a smooth quadric threefold in $\mathbb{P}^4$ and $p$ is a point in $\mathcal{Q}_3$. 
	\end{itemize}
	By studying the classification of Fano 3-folds with $\rho=2$ by Mori and Mukai and the work of Szurek-Wi\'sniewski (\cite{SW90}), we get that the families satisfying conditions (A.1)-(A.4) are the following:
	\begin{itemize}
		\item (A.1) \#2-24, \#2-27, \#2-31, \#2-32, \#2-34, \#2-35, \#2.36.
		\item (A.2) \#2-18, \#2-25, \#2-29.
		\item (A.3) \#2-33, \#2-34.
		\item (A.4) \#2-30.  
	\end{itemize}
	In the following paragraphs, we will find all choices of $A$ and $D$ in Fano 3-folds $Z$ belonging to the families above that yield a Fano 4-fold through Construction A.
	
	For every $Z$, we call $\gamma_1\colon Z\to W_1$ and $\gamma_2\colon Z\to W_2$ its elementary contractions, and we call $H_i$ the pull back through $\gamma_i$ of the ample generator of $\Pic(W_i)$ for $i=1,2$, so that $\Nef(Z)=\mathbb{R}_{\ge 0}H_1+\mathbb{R}_{\ge 0}H_2$. We know that $\{H_1,H_2\}$ is also a $\mathbb{Z}$-basis for $\Pic(Z)$, and $-K_Z\equiv \mu_2H_1+\mu_1H_2$, where $\mu_i$ is the length of the extremal ray associated to $\gamma_i$ (see \cite{MM81}). We also know that smooth irreducible hypersurfaces in $Z$ must be equivalent to a linear combination of $H_1$ and $H_2$ with non-negative integer coefficients, or to one of the exceptional divisors in $Z$, if there is any. This is enough to apply Proposition \ref{ConstructionA} and obtain Tables \ref{2-18}-\ref{2-36}.
	
	Observe that every Fano 4-fold $X$ constructed this way has $\delta_X=2$. Indeed, $\rho_Z=2$ implies that $\delta_Z\le 1$, and then by Lemma \ref{deltaLemma} we get $\delta_X\in\{2,3\}$. However, there cannot be Fano 4-folds with $\rho_X=4$ and $\delta_X=3$ (see \cite{CRS22}), so $\delta_X=2$.  
	
	We ultimately obtain the following theorem. 
	\begin{theorem}\label{AllRho4}
		There are 96 distinct families of Fano 4-folds $X$ with $\rho_X=4$ arising from Construction A, and they are all listed in Table \ref{FinalTableRho4}. All of them have $\delta_X=2$. Among them, 68 are not toric nor products of lower dimensional varieties.    
	\end{theorem}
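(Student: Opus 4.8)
The plan is to go through the four possibilities (A.1)--(A.4) for the base threefold $Z$ provided by Theorem \ref{theorem4folds}, and for each admissible $Z$ run the same procedure already used in Section \ref{Standard4foldsRho5}. First I would combine the Mori--Mukai classification of Fano 3-folds with $\rho=2$ with the Szurek--Wi\'sniewski classification of rank $2$ Fano bundles over surfaces (\cite{SW90}) to pin down exactly which families occur as $Z$: those listed as (A.1) \#2-24, \#2-27, \#2-31, \#2-32, \#2-34, \#2-35, \#2-36; (A.2) \#2-18, \#2-25, \#2-29; (A.3) \#2-33, \#2-34; (A.4) \#2-30. For each such $Z$ I would describe $\Nef(Z)$ and $\Eff(Z)$ explicitly (in the basis $\{H_1,H_2\}$ of nef generators, using $-K_Z\equiv\mu_2H_1+\mu_1H_2$ together with the description of the possible extremal and exceptional divisors), and then apply Proposition \ref{ConstructionA}: writing $A\sim D_1+D_2$ with $A$ a smooth irreducible hypersurface, the variety $X=\mathscr{C}_A(Z;A,D_1)$ is Fano precisely when $-K_Z-D_1$ and $-K_Z-D_2$ are both ample. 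Running over the finitely many lattice points in the relevant cones yields Tables \ref{2-18}--\ref{2-36}, and taking the union modulo the symmetry $\mathscr{C}_A(Z;A,D_1)\simeq\mathscr{C}_A(Z;A,D_2)$ from Section \ref{SettingA} produces the raw list.

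Next I would verify that $\delta_X=2$ for every member of the list. Since $\rho_Z=2$ forces $\delta_Z\le 1$, Lemma \ref{deltaLemma}(i)--(ii) gives $\delta_X\in\{2,3\}$; since there is no Fano 4-fold with $\rho=4$ and $\delta=3$ (\cite{CRS22}), we conclude $\delta_X=2$.

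Then comes the deduplication and the counting. For each family I would compute the numerical invariants ($c_1^4=(-K_X)^4$ via a formula analogous to Remark \ref{-K_X^3}, together with $c_1^2c_2$, $h^0(-K_X)$ and the Hodge numbers, by the techniques of Section 12); families with distinct invariant tuples are immediately distinct. Remaining coincidences of invariants must be resolved by geometric arguments of the kind developed in Section \ref{Standard4foldsRho5} --- Remark \ref{RemarkNegative} (using that the disjoint copies $G,\hat G$ of $Z$ and the exceptional divisors of $\sigma$ and $\hat\sigma$ constrain which divisorial contractions $X$ can carry, together with $\Pic(Y)=\mathbb{Z}\hat G_Y+\pi^*\Pic(Z)$) and Remark \ref{ProductRemark} (a splitting $X\simeq\mathbb{P}^1\times X_0$ forces $Y$ and $A_Y$ into a specific product shape). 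Finally, Lemma \ref{toricLemma} singles out the toric members ($Z$ toric and $A$ torus-invariant), while the product members are read off directly from the defining data of Construction A; removing these from the total of $96$ leaves $68$ families that are neither toric nor products.

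I expect the main obstacle to be the deduplication step: guaranteeing that the assembled list has no repetitions across the different bases $Z$ and no omissions, and in particular settling by ad hoc geometric arguments those families that happen to share all computed numerical invariants. The enumeration of the pairs $(A,D)$ for a fixed $Z$ is routine once the cones are written down, but controlling the overlaps between constructions over non-isomorphic bases --- and being confident that the final tally is exactly $96$ (respectively $68$) --- is where the real care is needed.
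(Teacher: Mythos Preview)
Your proposal is correct and follows essentially the same approach as the paper: case-split on $Z$ via Theorem \ref{theorem4folds} and the Mori--Mukai/Szurek--Wi\'sniewski classifications, enumerate admissible $(A,D)$ through Proposition \ref{ConstructionA} using the explicit cones of each $Z$, establish $\delta_X=2$ via Lemma \ref{deltaLemma} combined with the nonexistence of $(\rho,\delta)=(4,3)$ from \cite{CRS22}, and separate coinciding invariant tuples by the geometric criteria of Remarks \ref{RemarkNegative} and \ref{ProductRemark}. The paper carries out exactly this plan, with the ``Ambiguities'' paragraph handling the small number of families sharing all computed invariants.
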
	   
	
	As a side comment, it is worth noticing that the Fano polytopes of the toric Fano 4-folds arising from Construction A  with $\rho=4$ are all of type $H_n$ or $I_n$, for some $n\in\mathbb{N}$ (see \cite{Bat99}). 
	
	\paragraph{1. \fbox{$Z=$ Double cover of $\mathbb{P}^1\times\mathbb{P}^2$ with branch locus a divisor of degree $(2,2)$ (\#2-18).}} 
	The elementary contractions of $Z$ are $\gamma_1\colon Z\to \mathbb{P}^1$ and $\gamma_2\colon Z\to \mathbb{P}^2$, where $\gamma_1$ is a quadric bundle, and $\gamma_2$ is a conic bundle. 
	\begin{small}
		\begin{center}
			\begin{longtable}{|c|c||c|c|c|c|c|c|c|c|c|}
				\hline
				$A$ & $D$ &\#& $c_1^4$ & $c_1^2c_2$ & $h^0(-K_X)$ & $h^{1,2}$& $h^{2,2}$&$h^{1,3}$&Toric?&Product?\\
				\hline\hline
				$H_2$&$H_2$/$0$& 1&152&128&37&2&10&0&No&No\\
				\hline\hline
				$2H_2$&$H_2$&2&112&112&29&2&14&0&No&No\\
				\hline
				\caption{Choices of $A$, $D$ in $Z=$ double cover of $\mathbb{P}^1\times\mathbb{P}^2$ with branch locus a $(2,2)$-divisor.}
				\label{2-18}
			\end{longtable}
		\end{center}
	\end{small}
	\paragraph{2. \fbox{$Z=(1,2)$-divisor in $\mathbb{P}^2\times\mathbb{P}^2$ (\# 2-24).}}
	The elementary contractions of $Z$ are $\gamma_1\colon Z\to W$ and $\gamma_2\colon Z\to \mathbb{P}^2$, where $\gamma_1$ is a conic bundle onto a surface $W$, and $\gamma_2$ is a $\mathbb{P}^1$-bundle.
	\begin{small}
		\begin{center}
			\begin{longtable}{|c|c||c|c|c|c|c|c|c|c|c|}
				\hline
				$A$ & $D$ &\#& $c_1^4$ & $c_1^2c_2$ & $h^0(-K_X)$ & $h^{1,2}$& $h^{2,2}$&$h^{1,3}$&Toric?&Product?\\
				\hline\hline
				$H_1$&$H_1$/$0$&1& 194 &140 &45&0&9&0&No&No\\
				\hline\hline
				$2H_1$&$H_1$&2&148&124&36&0&12&0&No&No\\
				\hline
				\caption{Choices of $A$, $D$ in $Z=(1,2)$-divisor in $\mathbb{P}^2\times\mathbb{P}^2$.}
				\label{2-24}
			\end{longtable}
		\end{center}
	\end{small}
	
	\paragraph{3. \fbox{$Z=(1,2)$-divisor in $\mathbb{P}^1\times\mathbb{P}^3$ (\#2-25).}}
	The elementary contractions of $Z$ are $\gamma_1\colon Z\to \mathbb{P}^1$ and $\gamma_2\colon Z\to \mathbb{P}^3$, where $\gamma_1$ is a quadric bundle, $\gamma_2$ is the blow up of the intersection of two quadrics in $\mathbb{P}^3$. 
	\begin{small}
		\begin{center}
			\begin{longtable}{|c|c||c|c|c|c|c|c|c|c|c|}
				\hline
				$A$ & $D$ &\#& $c_1^4$ & $c_1^2c_2$ & $h^0(-K_X)$ & $h^{1,2}$& $h^{2,2}$&$h^{1,3}$&Toric?&Product?\\
				\hline\hline
				\multirow{3}{*}{$-H_1+2H_2$}
				&$-H_1$/$2H_2$&1&216&144&49&2&6&0&No&No\\
				\cline{2-11}
				&$-H_1+H_2$/$H_2$&2&164&128&39&2&6&0&No&No\\
				\cline{2-11}
				&$-H_1+2H_2$/$0$&3&176&128&41&2&6&0&No&No\\
				\hline\hline
				$H_2$&$H_2$/$0$& 4&199&142&46&1&9&0&No&No\\
				\hline\hline
				$4H_2$&$H_2$&5&142&124&35&1&14&0&No&No\\
				\hline
				\caption{Choices of $A$, $D$ in $Z=(1,2)$ divisor in $\mathbb{P}^1\times\mathbb{P}^3$.}
				\label{2-25}
			\end{longtable}
		\end{center}
	\end{small} 
	\paragraph{4. \fbox{$Z=$ Blow up of a twisted cubic in $\mathbb{P}^3$ (\#2-27).}} 
	The elementary contractions of $Z$ are $\gamma_1\colon Z\to \mathbb{P}^2$ and $\gamma_2\colon Z\to \mathbb{P}^3$, where $\gamma_1$ is a $\mathbb{P}^1$-bundle and $\gamma_2$ is the blow up of a twisted cubic in $\mathbb{P}^3$. 
	\begin{small}[H]
		\begin{center}
			\begin{longtable}{|c|c||c|c|c|c|c|c|c|c|c|}
				\hline
				$A$ & $D$ &\#& $c_1^4$ & $c_1^2c_2$ & $h^0(-K_X)$ & $h^{1,2}$& $h^{2,2}$&$h^{1,3}$&Toric?&Product?\\
				\hline\hline
				$H_2$
				&$H_2$/$0$ &1& 241 & 154 &50 &0&6&0&No&No\\
				\hline\hline
				$-H_1+2H_2$ & $H_2$/$-H_1+H_2$ &2& 221 & 146 &54&0&8&0&No&No \\
				\hline\hline
				$2H_2$& $H_2$ &3& 178 & 136 & 42 &0&12&0&No&No\\
				\hline
				\caption{Choices of $A$, $D$ in $Z=$ Blow up of $\mathbb{P}^3$ along a twisted cubic.}
				\label{2-27}
			\end{longtable}
		\end{center}
	\end{small}
	\paragraph{5. \fbox{$Z=$ Blow up of a conic in a quadric in $\mathbb{P}^4$ (\#2-29).}} 
	The elementary contractions of $Z$ are $\gamma_1\colon Z\to \mathbb{P}^1$ and $\gamma_2\colon Z\to \mathcal{Q}\subset\mathbb{P}^4$, where $\gamma_1$ is a quadric bundle over $\mathbb{P}^1$ and $\gamma_2$ is the blow up of a conic in a smooth quadric $\mathcal{Q}\subset \mathbb{P}^4$. 
	\begin{small}
		\begin{center}
			\begin{longtable}{|c|c||c|c|c|c|c|c|c|c|c|}
				\hline
				$A$ & $D$ &\#& $c_1^4$ & $c_1^2c_2$ & $h^0(-K_X)$ & $h^{1,2}$& $h^{2,2}$&$h^{1,3}$&Toric?&Product?\\
				\hline\hline
				\multirow{2}{*}{$-H_1+H_2$}
				&$-H_1$/$H_2$&1&414&204&87&1&6&0&No&No\\
				\cline{2-11}
				&$-H_1+H_2$/$0$&2&356&188&76&1&6&0&No&No\\
				\hline\hline
				$H_2$
				&$H_2$/$0$ &3& 246 & 156 &55 &0&8&0&No&No\\
				\hline\hline
				$2H_2$& $H_2$ &4& 172 & 136 & 41 &0&14&0&No&No\\
				\hline
				\caption{Choices of $A$, $D$ in $Z=$ Blow up of a quadric in $\mathbb{P}^3$ along a conic.}
				\label{2-29}
			\end{longtable}
		\end{center}
	\end{small}
	\paragraph{6. \fbox{$Z=$ Blow up of a conic in $\mathbb{P}^3$ (\#2-30).}} 
	The elementary contractions of $Z$ are $\gamma_1\colon Z\to \mathbb{P}^3$ and $\gamma_2\colon Z\to \mathcal{Q}\subset\mathbb{P}^4$, where $\gamma_1$ is a the blow up of a plane conic in $\mathbb{P}^3$, and $\gamma_2$ is the blow up of a point in a smooth quadric $\mathcal{Q}\subset\mathbb{P}^4$.
	\begin{small}
		\begin{center}
			\begin{longtable}{|c|c||c|c|c|c|c|c|c|c|c|}
				\hline
				$A$ & $D$ &\#& $c_1^4$ & $c_1^2c_2$ & $h^0(-K_X)$ & $h^{1,2}$& $h^{2,2}$&$h^{1,3}$&Toric?&Product?\\
				\hline\hline
				$2H_1-H_2$/$0$&$H_1$/$H_1-H_2$&1&316&172&68&0&6&0&No&No\\
				\hline\hline
				$H_1$
				&$H_1$/$0$ &2& 299 & 170 &65 &0&7&0&No&No\\
				\hline\hline
				$2H_1$& $H_1$ &3& 230 & 152 & 52 &0&10&0&No&No\\
				\hline
				\caption{Choices of $A$, $D$ in $Z=$ Blow up of $\mathbb{P}^3$ along a conic.}
				\label{2-30}
			\end{longtable}
		\end{center}
	\end{small}
	\paragraph{7. \fbox{$Z=$ Blow up of a quadric in $\mathbb{P}^4$ along a line (\# 2-31).}}
	The elementary contractions of $Z$ are $\gamma_1\colon Z\to \mathbb{P}^2$ and $\gamma_2\colon Z\to \mathcal{Q}$, where $\gamma_1$ is a $\mathbb{P}^1$-bundle over $\mathbb{P}^2$ and $\gamma_2$ is the blow up of a smooth quadric $\mathcal{Q}$ in $\mathbb{P}^4$ along a line.
	\begin{small}
		\begin{center}
			\begin{longtable}{|c|c||c|c|c|c|c|c|c|c|c|}
				\hline
				$A$ & $D$ &\#& $c_1^4$ & $c_1^2c_2$ & $h^0(-K_X)$ & $h^{1,2}$& $h^{2,2}$&$h^{1,3}$&Toric?&Product?\\
				\hline\hline
				\multirow{2}{*}{$-H_1+H_2$}
				&$H_2$/$-H_1$ &1& 368 & 188 &78 &0&6&0&No&No\\
				\cline{2-11}
				& $-H_1+H_2$/$0$ &2& 331 & 178 &71&0&6&0&No&No \\
				\hline\hline
				$H_2$& $H_2$/$0$ &3& 288 & 168 & 63 &0&7&0&No&No\\
				\hline\hline
				$2H_2$&$H_2$/$0$&4& 208 &148 &48&0&12&0&No&No\\
				\hline
				\caption{Choices of $A$, $D$ in $Z=$ Blow up of a quadric in $\mathbb{P}^4$ along a line.}
				\label{2-31}
			\end{longtable}
		\end{center}
	\end{small}
	\paragraph{8. \fbox{$Z=(1,1)$-divisor in $\mathbb{P}^2\times\mathbb{P}^2$ (\# 2-32).}}
	The elementary contractions $\gamma_1\colon Z\to \mathbb{P}^2$ and $\gamma_2\colon Z\to \mathbb{P}^2$ of $Z$ are the restrictions of the projections of $\mathbb{P}^2\times\mathbb{P}^2$ onto its factors. Both realize $Z$ as $\mathbb{P}(T_{\mathbb{P}^2})$. 
	\begin{small}
		\begin{center}
			\begin{longtable}{|c|c||c|c|c|c|c|c|c|c|c|}
				\hline
				$A$ & $D$ &\#& $c_1^4$ & $c_1^2c_2$ & $h^0(-K_X)$ & $h^{1,2}$& $h^{2,2}$&$h^{1,3}$&Toric?&Product?\\
				\hline\hline
				\multirow{3}{*}{$H_2$}
				&$H_1+H_2$/$-H_1$ &1& 362 & 188 &77 &0&6&0&No&No\\
				\cline{2-11}
				&$H_2$/$0$ &2& 320 & 176 &69 &0&6&0&No&No\\
				\cline{2-11}
				& $-H_1+H_2$/$H_1$ &3& 304 & 172 &66&0&6&0&No&No \\
				\hline\hline
				\multirow{2}{*}{$2H_2$}&$H_1+H_2$/$-H_1+H_2$&4&266&164&59&0&6&0&No&No\\
				\cline{2-11}
				&$H_2$&5&256&160&57&0&6&0&No&No\\
				\hline\hline			
				\multirow{2}{*}{$H_1+H_2$}& $H_1+H_2$/$0$ &6& 282 & 168 & 62 &0&8&0&No&No\\
				\cline{2-11}
				&$H_2$/$H_1$ &7& 256&160&57&0&8&0&No&No\\
				\hline\hline
				$H_1+2H_2$&$H_2$/$H_1+H_2$&8& 218 &152 &50&0&12&0&No&No\\
				\hline\hline
				$2H_1+2H_2$&$H_1+H_2$&9&180&144&43&0&24&1&No&No\\
				\hline
				\caption{Choices of $A$, $D$ in $Z=(1,1)$-divisor in $\mathbb{P}^2\times\mathbb{P}^2$.}
				\label{2-32}
			\end{longtable}
		\end{center}
	\end{small}
	\paragraph{9. \fbox{$Z=$ Blow up of a line in $\mathbb{P}^3$ (\#2-33).}} 
	The elementary contractions of $Z$ are $\gamma_1\colon Z\to \mathbb{P}^1$ and $\gamma_2\colon Z\to \mathbb{P}^3$, where $\gamma_1$ is a $\mathbb{P}^2$-bundle that realizes $Z$ as $\mathbb{P}_{\mathbb{P}^1}(\mathcal{O}\oplus\mathcal{O}\oplus\mathcal{O}(1))$, and $\gamma_2$ is the blow up of a line in $\mathbb{P}^3$.
	\begin{small}
		\begin{center}
			\begin{longtable}{|c|c||c|c|c|c|c|c|c|c|c|}
				\hline
				$A$ & $D$ &\#& $c_1^4$ & $c_1^2c_2$ & $h^0(-K_X)$ & $h^{1,2}$& $h^{2,2}$&$h^{1,3}$&Toric?&Product?\\
				\hline\hline
				\multirow{4}{*}{$-H_1+H_2$}
				&$-H_1-H_2$/$2H_2$&1&496&220&102&0&6&0&$I_{1}$&No\\
				\cline{2-11}
				&$-H_2$/$-H_1+2H_2$&2&433&202&90&0&6&0&$I_{4}$&No\\
				\cline{2-11}
				&$-H_1$/$H_2$&3&411&198&86&0&6&0&$I_{6}$&No\\
				\cline{2-11}
				&$-H_1+H_2$/$0$&4&390&192&82&0&6&0&$I_{9}$&No\\
				\hline\hline
				\multirow{2}{*}{$H_2$}
				&$2H_2$/$-H_2$ &5& 415 & 202 &87 &0&6&0&$I_{5}$&No\\
				\cline{2-11}
				&$H_2$/$0$&6& 357&186&76&0&6&0&$I_{14}$&No\\
				\hline\hline
				\multirow{2}{*}{$2H_2$}
				& $2H_2$/$0$ &7& 308 & 176 & 67 &0&8&0&No&No\\
				\cline{2-11}
				&$H_2$&8&282&168&62&0&8&0&No&No\\
				\hline\hline
				$3H_2$&$H_2$/$2H_2$& 9&233&158&53&0&14&0&No&No\\
				\hline\hline
				$4H_2$&$2H_2$&10&184&148&44&0&6&1&No&No\\
				\hline
				\caption{Choices of $A$, $D$ in $Z=$ Blow up of a line in $\mathbb{P}^3$.}
				\label{2-33}
			\end{longtable}
		\end{center}
	\end{small}
	\paragraph{10. \fbox{$Z=\mathbb{P}^1\times\mathbb{P}^2$ (\# 2-34).}}
	The elementary contractions of $Z$ are the projections $\gamma_1\colon Z\to \mathbb{P}^1$ and $\gamma_2\colon Z\to \mathbb{P}^2$. 
	\begin{small}
		\begin{center}
			\begin{longtable}{|c|>{\centering\arraybackslash}m{3 cm}||c|c|c|c|c|c|c|c|c|}
				\hline
				$A$ & $D$ &\#& $c_1^4$ & $c_1^2c_2$ & $h^0(-K_X)$ & $h^{1,2}$& $h^{2,2}$&$h^{1,3}$&Toric?&Product?\\
				\hline\hline
				\multirow{5}{*}{$H_1$}& $-2H_2$/$H_1+2H_2$ &1& 478 & 220 & 99 &0&5&0&$H_{3}$&No\\
				\cline{2-11}
				& $-H_2$/$H_1+H_2$ &2& 415 & 202 & 87 &0&5&0&$H_5$&No\\
				\cline{2-11}
				& $2H_2$/$H_1-2H_2$ &3& 382 & 196 & 81&0&5&0&$H_7$&No\\
				\cline{2-11}
				& $H_1$/$0$ &4& 378 & 192 & 80 &0&5&0&$H_{8}$&$\text{Bl}_2\mathbb{P}^2\times\mathbb{P}^2$\\
				\cline{2-11}
				& $H_1$/$H_1-H_2$ &5& 367 & 190 & 78 &0&5&0&$H_{9}$&No\\
				\hline\hline
				\multirow{6}{*}{$H_2$}
				&$-H_1-H_2$/$H_1+2H_2$ &6& 463 & 214 &96 &0&6&0&$I_{2}$&No\\
				\cline{2-11}
				& $-H_2$/$2H_2$ &7& 400 & 196 &84&0&6&0&$I_{7}$&$\mathbb{P}^1\times$(\#3-29) \\
				\cline{2-11}
				& $-H_1$/$H_1+H_2$ &8& 389 & 194 &82&0&6&0&$I_{10}$&No \\
				\cline{2-11}
				& $H_2$/$0$ &9& 368 & 188 &78&0&6&0&$I_{13}$&$\mathbb{P}^1\times$(\#3-26) \\
				\cline{2-11}
				& $-H_1+H_2$/$H_1$ &10& 347 & 196 &84&0&6&0&$I_{12}$&No \\
				\cline{2-11}
				& $-H_1+2H_2$/$H_1-H_2$ &11& 337 & 178 &72&0&6&0&$I_{15}$&No \\
				\hline\hline		
				\multirow{5}{*}{$2H_2$} 
				& $-H_1$/$H_1+2H_2$ &12&356 & 188 &76&0&6&0&No&No \\
				\cline{2-11}
				& $2H_2$/$0$ &13&320 & 176 &69&0&6&0&No&$\mathbb{P}^1\times$(\#3-22) \\
				\cline{2-11}
				& $H_2$ &14&304 & 172 &66&0&6&0&No&$\mathbb{P}^1\times$(\#3-19) \\
				\cline{2-11}
				& $-H_1+H_2$/$H_1+H_2$ &15&304 & 172 &66&0&6&0&No&No \\
				\cline{2-11}
				& $-H_1+2H_2$/$H_1$ &16&284 & 164 &62&0&6&0&No&No \\
				\hline\hline
				\multirow{3}{*}{$3H_2$}
				& $-H_1+H_2$/$H_1+2H_2$ &17&271 &166 &60&1&6&0&No&No \\
				\cline{2-11}
				& $H_2$/$2H_2$ &18&256 & 160 &57&1&6&0&No&$\mathbb{P}^1\times$(\#3-14) \\
				\cline{2-11}
				& $-H_1+2H_2$/$H_1+H_2$ &19&241 & 154 &54&1&6&0&No&No \\
				\hline\hline
				\multirow{2}{*}{$4H_2$}
				&$-H_1+2H_2$/$H_1+2H_2$&20&208&148&48&3&6&0&No&No\\
				\cline{2-11}
				&$2H_2$&21&208&148&48&3&6&0&No&$\mathbb{P}^1\times$(\#3-9)\\
				\hline\hline
				\multirow{4}{*}{$H_1+H_2$}
				&$-H_2$/$H_1+2H_2$&22&382&196&81&0&6&0&No&No\\
				\cline{2-11}
				&$H_1+H_2$/$0$&23&335&182&72&0&6&0&No&No\\
				\cline{2-11}
				&$2H_2$/$H_1-H_2$&24&319&178&69&0&6&0&No&No\\
				\cline{2-11}
				&$H_2$/$H_1$&25&314&176&68&0&6&0&No&No\\
				\hline\hline
				\multirow{3}{*}{$H_1+2H_2$}
				&$H_1+2H_2$/$0$&26&302&176&66&0&9&0&No&No\\
				\cline{2-11}
				&$H_1$/$H_1+H_2$&27&271&166&60&0&9&0&No&No\\
				\cline{2-11}
				&$2H_2$/$H_1$&28&266&164&59&0&9&0&No&No\\
				\hline\hline
				\multirow{2}{*}{$H_1+3H_2$}
				&$H_2$/$H_1+2H_2$& 29&238&160&54&0&14&0&No&No\\
				\cline{2-11}
				&$2H_2$/$H_1+2H_2$&30&223&154&51&0&14&0&No&No\\
				\hline\hline
				$H_1+4H_2$&$2H_2$/$H_1+2H_2$&31&190&148&45&0&21&0&No&No\\
				\hline\hline
				\multirow{2}{*}{$2H_1+H_2$}
				&$H_1-H_2$/$H_1+2H_2$&32&301&178&66&0&6&0&No&No\\
				\cline{2-11}
				&$H_1$/$H_1+H_2$&33&281&170&62&0&6&0&No&No\\
				\hline\hline
				\multirow{2}{*}{$2H_1+2H_2$}
				&$H_1$/$H_1+2H_2$&34&248&164&56&0&12&0&No&No\\
				\cline{2-11}
				&$H_1+H_2$&35&238&160&54&0&12&0&No&No\\
				\hline\hline
				$2H_1+3H_2$&$H_1+H_2$/$H_1+2H_2$ & 36&205&154&48&0&24&1&No&No\\
				\hline\hline
				$2H_1+4H_2$&$H_1+2H_2$&37&172&148&42&0&42&3&No&No\\
				\hline
				\caption{Choices of $A$, $D$ in $Z=\mathbb{P}^1\times\mathbb{P}^2$.}
				\label{2-34}
			\end{longtable}
		\end{center}
	\end{small}
	\paragraph{11. \fbox{$Z=\mathbb{P}_{\mathbb{P}^2}(\mathcal{O}\oplus\mathcal{O}(1))$ (\# 2-35).}} 
	The elementary contractions of $Z$ are $\gamma_1\colon Z\to \mathbb{P}^2$ and $\gamma_2\colon Z\to \mathbb{P}^3$, where $\gamma_1$ realizes $Z$ as $\mathbb{P}_{\mathbb{P}^2}(\mathcal{O}\oplus\mathcal{O}(1))$, while $\gamma_2$ is the blow up of a point in $\mathbb{P}^3$.
	\begin{small}
		
		\begin{center}
			\begin{longtable}{|c|c||c|c|c|c|c|c|c|c|c|}
				\hline
				$A$ & $D$ &\#& $c_1^4$ & $c_1^2c_2$ & $h^0(-K_X)$ & $h^{1,2}$& $h^{2,2}$&$h^{1,3}$&Toric?&Product?\\
				\hline\hline
				\multirow{2}{*}{$-H_1+H_2$}
				&$-H_1$/$H_2$ &1& 447 & 210 &93 &0&5&0&$H_{4}$&No\\
				\cline{2-11}
				& $-H_1+H_2$/$0$ &2& 415 & 202 &87&0&5&0&$H_{5}$&No \\
				\hline\hline
				\multirow{3}{*}{$H_1$}& $H_1+H_2$/$-H_2$ &3& 442 & 208 & 92 &0&6&0&$I_{3}$&No\\
				\cline{2-11}
				&$H_1$/$0$ &4& 384&192&81&0&6&0&$I_{8}$&No\\
				\cline{2-11}
				&$H_1-H_2$/$H_2$ &5& 384&192&81&0&6&0&$I_8$&No\\
				\hline\hline
				\multirow{2}{*}{$2H_1$}
				&$H_1-H_2$/$H_1+H_2$&6&346&184&74&0&6&0&No&No\\
				\cline{2-11}
				&$H_1$&7&320&176&69&0&6&0&No&No\\
				\hline\hline
				\multirow{3}{*}{$H_2$}
				&$-H_1$/$H_1+H_2$&8&409&202&86&0&5&0&$H_{6}$&No\\
				\cline{2-11}
				&$H_2$/$0$&9&367&190&78&0&5&0&$H_{9}$&No\\
				\cline{2-11}
				&$-H_1+H_2$/$H_1$&10&351&186&75&0&5&0&$H_{10}$&No\\
				\hline\hline
				\multirow{2}{*}{$H_1+H_2$}
				&$H_1+H_2$/$0$&11&329&182&71&0&7&0&No&No\\
				\cline{2-11}
				&$H_1$/$H_2$&12&303&174&66&0&7&0&No&No\\
				\hline\hline
				$2H_1+H_2$&$H_1+H_2$/$H_1$&13&265&166&59&0&11&0&No&No\\
				\hline\hline
				\multirow{2}{*}{$2H_2$}
				&$-H_1+H_2$/$H_1+H_2$&14&296&176&65&0&6&0&No&No\\
				\cline{2-11}
				&$H_2$&15&286&172&63&0&6&0&No&No\\
				\hline\hline
				$H_1+2H_2$&$H_2$/$H_1+H_2$&16&248&164&56&0&12&0&No&No\\
				\hline\hline
				$2H_1+2H_2$ &$H_1+H_2$ &17& 210&156&49&0&24&1&No&No\\
				\hline
				\caption{Choices of $A$, $D$ in $Z=\mathbb{P}_{\mathbb{P}^2}(\mathcal{O}\oplus\mathcal{O}(1))$.}
				\label{2-35}
			\end{longtable}
		\end{center}
	\end{small}
	\paragraph{12. \fbox{$Z=\mathbb{P}_{\mathbb{P}^2}(\mathcal{O}\oplus\mathcal{O}(2))$ (\# 2-36).}} 
	The elementary contractions of $Z$ are $\gamma_1\colon Z\to \mathbb{P}^2$ and $\gamma_2\colon Z\to W$, where $\gamma_1$ realizes $Z$ as $\mathbb{P}_{\mathbb{P}^2}(\mathcal{O}\oplus\mathcal{O}(2))$, while $\gamma_2$ is the blow up of a quadruple non-Gorenstein point in a threefold $W$. 
	\begin{small}
		\begin{center}
			\begin{longtable}{|c|c||c|c|c|c|c|c|c|c|c|}
				\hline
				$A$ & $D$ &\#& $c_1^4$ & $c_1^2c_2$ & $h^0(-K_X)$ & $h^{1,2}$& $h^{2,2}$&$h^{1,3}$&Toric?&Product?\\
				\hline\hline
				\multirow{3}{*}{$-2H_1+H_2$}
				&$-2H_1$/$H_2$ &1& 558 & 240 &114 &0&5&0&$H_1$&No\\
				\cline{2-11}
				& $-H_1+H_2$/$-H_1$ &2& 505 & 226 &104&0&5&0&$H_{2}$&No \\
				\cline{2-11}
				&$-2H_1+H_2$/$0$&3&478&220&99&0&5&0&$H_{3}$&No\\
				\hline\hline
				$H_2$&$H_2$/$0$&4&382&196&81&0&5&0&$H_{7}$&No\\
				\hline
				$2H_2$ &$H_2$ &5& 268&172&60&0&12&0&No&No\\
				\hline
				\caption{Choices of $A$, $D$ in $Z=\mathbb{P}_{\mathbb{P}^2}(\mathcal{O}\oplus\mathcal{O}(2))$.}
				\label{2-36}
			\end{longtable}
		\end{center}
	\end{small}
	\paragraph{Ambiguities}
	In Tables \ref{2-18}-\ref{2-36} there are some families of Fano 4-folds arising from different constructions of type A that have the same invariants. We are going to prove that all non-toric Fano 4-folds listed in said tables are distinct. The study of the toric 4-folds can be done by comparing their descriptions with the ones in \cite{Bat99}.
	
	We now list all ambiguities, namely all couples or triples of families sharing the same invariants.  
	\begin{itemize}
		\item (\#6 in Table \ref{2-32} , \#8 in Table \ref{2-33}). We prove that they are two distinct families. Let $X=\mathscr{C}_A(Z;A,D)$ be a member of the first family, and $X'=\mathscr{C}_{A}(Z';A',D')$ be a member of the second one. We have $Z=(1,1)$-divisor in $\mathbb{P}^2\times\mathbb{P}^2$, $A=H_1+H_2$, $D=H_1+H_2$; $Z'=$ blow-up of a line in $\mathbb{P}^3$, $A'=2H_2'$, $D'=H_2'$. Since $Z'$ does not contain any effective divisor having negative intersection with all curves contained in its support, and $D$ is ample, it follows that $X$ is not isomorphic to $X'$, by Remark \ref{RemarkNegative}.
		\item (\#3 Table \ref{2-32} , \#14 Table \ref{2-34}, \#15 in Table \ref{2-34})
		Let $X_1$, $X_2$, $X_3$ be each a member of one of those families.  We show that they are not isomorphic to each other.
		
		First of all, observe that $X_2\simeq \mathbb{P}^1\times$(\#3-19), while the other two varieties are not products by Remark \ref{ProductRemark}. 
		
		Now, let $X_3=\mathscr{C}_A(Z;A,D)$ be the description of $X_3$ given in Table \ref{2-34}. We have that $D=H_1+H_2$ is an ample divisor in $Z=\mathbb{P}^1\times\mathbb{P}^2$. This implies that $X$ cannot have an alternative construction of type A with base $\mathbb{P}_{\mathbb{P}^2}(T_{\mathbb{P}^2})$, which does not have any divisor with negative intersection with every curve contained in its support ( see Remark \ref{RemarkNegative}). Hence $X_3$ is not isomorphic to $X_1$.   
		\item (\#20 Table \ref{2-34} , \#21 Table \ref{2-34}). We now show that they are distinct families. Indeed, \#21 is the product $\mathbb{P}^1\times$(\#3-9), while members of family \#20 cannot be products of that kind, as we can see by applying Remark \ref{ProductRemark}. 
		
		\item (\#34 in Table \ref{2-33} , \#16 in Table \ref{2-34}). We now prove that those families are distinct. Let $X$ be a Fano 4-fold in the latter family. By the description in Table \ref{2-34}, $X=\mathscr{C}_A(A;Z,D)$ with $Z=\mathbb{P}^1\times\mathbb{P}^2$, $A=2H_1+2H_2$, $D=H_1+2H_2$. Observe that $D$ is ample in $Z$ and in the blow up of a line in $\mathbb{P}^3$ there are no effective divisors with negative intersection with every curve contained in their support. Therefore by Remark \ref{RemarkNegative}, $X$ does not belong to the first family.
		
		\item (\#2 Table \ref{2-32} , \#13 Table \ref{2-34} , \#7 Table \ref{2-35}). Let $X_1$, $X_2$, $X_3$ be each a member of one of those families. We now prove that they are not isomorphic. 
		
		First of all, observe that $X_2$ can be expressed as the product $\mathbb{P}^1\times$(\#3-22), while the other two cannot, as we can see by applying Remark \ref{ProductRemark}. 
		
		Now, consider the construction of type A that yields $X_3=\mathscr{C}_A(Z;A,D)$. We have $Z=\mathbb{P}_{\mathbb{P}^2}(\mathcal{O}\oplus\mathcal{O}(1))$, $A=2H_2$, $D=2H_2$. On the other hand, $X_1$ arises from a construction of type A with base $Z'=(1,1)$-divisor in $\mathbb{P}^2\times\mathbb{P}^2$. Suppose by contradiction that $X_1\simeq X_3$. By similar arguments as in the proof of Remark \ref{RemarkNegative}, using the fact that $D=2H_2$ is nef one can conclude that $Z'$ must contain an effective divisor with non-positive intersection with every curve contained in its support. Let $U$ be the class of such divisor. Since $\Eff(Z)_{\mathbb{Z}}=\mathbb{Z}_{\ge 0}H_1+\mathbb{Z}_{\ge 0}H_2$, there exist $u_1,u_2\in\mathbb{Z}_{\ge 0}$ such that $U= u_1H_1+u_2H_2$. Let $l_1$, $l_2$ in $N_1(Z)$ be the classes of irreducible curves generating $\NE(Z)$. We have $H_i\cdot l_j=1-\delta_{i,j}$ (see \cite{MM81}). It follows that necessarily $U\cdot C=0$ for all irreducible curves $C$ contained in the support of $U$. But then the only possibility for $U$ is to be equivalent to a multiple of $H_1$ or $H_2$. Suppose $U= aH_1$ for some $a>0$, so that $U=\gamma_1^*(\mathcal{O}_{\mathbb{P}^2}(a))=\mathcal{O}_{\mathbb{P}^2\times\mathbb{P}^2}(1,1)\cap\mathcal{O}_{\mathbb{P}^2\times\mathbb{P}^2}(a,0)$. But then, one can easily check that there exists a curve $C$ in the support of $U$ that is not a fiber for $\gamma_1$ (i.e.~not numerically equivalent to $l_1$), which implies $U\cdot C>0$, a contradiction.
	\end{itemize}

	\section{Numerical invariants}
	In this section we show the tools we used to compute the numerical invariants of the Fano 4-folds constructed in Sections \ref{Standard4foldsRho5} and \ref{Standard4foldsRho4}. Every such Fano variety $X$ arises from Construction A. The objects that are part of the structure of Construction A will be denoted by the same notation as in Section \ref{SettingA}. 
	
	First we want to compute the Hodge numbers. To do so, we will use the Hodge polynomial of a smooth variety $W$. The Hodge polynomial \( e(W)(u, v) \) is given by:
	
	\[
	e(W)(u, v) := \sum_{p, q} h^{p, q}(W) u^p v^q
	\]
	where \( h^{p, q}(W) \) are the Hodge numbers, corresponding to the dimensions of the Dolbeault cohomology groups \( H^{p, q}(W) \).
	
	\begin{lemma}\label{HodgeLemma}
		
		Let \( W \) be a smooth variety, let \( \mathbb{P}(\mathcal{E}) \) be a \( \mathbb{P}^n \)-bundle over \( W \), and let \( W^\prime \) be the blow-up of \( W \) along a subvariety \( V \) of codimension \( c \). Then the Hodge polynomials satisfy the following relationships:
		
		\begin{align*}
			e(\mathbb{P}(\mathcal{E})) = e(W) \cdot e(\mathbb{P}^n)\;\;\;\;\;\text{ and }\;\;\;\;\;
			e(W^\prime) = e(W) + e(V) \cdot \left[ e(\mathbb{P}^{c-1}) - 1 \right]
		\end{align*}
		
	\end{lemma}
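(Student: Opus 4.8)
The plan is to deduce both identities from the standard structure results on the cohomology of projective bundles and of blow-ups along smooth centres, noting that in every application (and as is needed for the proof) the centre $V$ is a smooth subvariety. The one thing to keep track of throughout is that all the decompositions below are compatible with Hodge structures, because the "extra" pieces are generated by \emph{algebraic} classes (a relative hyperplane class, resp.\ the class of the exceptional divisor), hence contribute only Tate twists. Equivalently, one may phrase everything motivically: the Hodge polynomial $e(W)(u,v)$ of a smooth projective $W$ equals, after the substitution $(u,v)\mapsto(-u,-v)$, the Hodge--Deligne ($E$-)polynomial, which is additive under stratification into locally closed pieces and multiplicative for Zariski-locally trivial fibrations; since $(u,v)\mapsto(-u,-v)$ is a ring automorphism of $\mathbb{Z}[u,v]$ fixing $e(\mathbb{P}^m)$ and $-1$, any identity proved for $E$ transports to $e$. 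I would present the elementary cohomological route as the main argument and mention the motivic one as an alternative.

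For the projective bundle formula, I would invoke the Leray--Hirsch theorem: if $\xi\in H^2(\mathbb{P}(\mathcal{E}))$ denotes the relative hyperplane class, then $1,\xi,\dots,\xi^n$ freely generate $H^*(\mathbb{P}(\mathcal{E}))$ as a module over $H^*(W)$, so that $H^k(\mathbb{P}(\mathcal{E}))\cong\bigoplus_{j=0}^{n}H^{k-2j}(W)$. Since $\xi$ is the first Chern class of a line bundle it is of Hodge type $(1,1)$, so this isomorphism reads $H^k(\mathbb{P}(\mathcal{E}))\cong\bigoplus_{j=0}^{n}H^{k-2j}(W)(-j)$ as Hodge structures, whence $h^{p,q}(\mathbb{P}(\mathcal{E}))=\sum_{j=0}^{n}h^{p-j,q-j}(W)$. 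Summing against $u^pv^q$ gives $e(\mathbb{P}(\mathcal{E}))=e(W)\cdot\bigl(1+uv+\cdots+(uv)^n\bigr)=e(W)\cdot e(\mathbb{P}^n)$.

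For the blow-up formula, let $\tau\colon W'=\Bl_V W\to W$ with exceptional divisor $E=\tau^{-1}(V)$. Two standard facts do the job: first, $E\cong\mathbb{P}_V(\mathcal{N}_{V/W})$ is a $\mathbb{P}^{c-1}$-bundle over $V$, so by the formula just proved $e(E)=e(V)\cdot e(\mathbb{P}^{c-1})$; second, $\tau$ restricts to an isomorphism $W'\setminus E\xrightarrow{\sim}W\setminus V$. In the motivic version one concludes at once: by additivity of the $E$-polynomial, $E(W')=E(W'\setminus E)+E(E)=\bigl(E(W)-E(V)\bigr)+E(V)E(\mathbb{P}^{c-1})=E(W)+E(V)\bigl(E(\mathbb{P}^{c-1})-1\bigr)$, and transporting along $(u,v)\mapsto(-u,-v)$ yields the claim for $e$. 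Alternatively, one uses the classical blow-up formula (valid since $V$ is smooth) $H^k(W')\cong H^k(W)\oplus\bigoplus_{j=1}^{c-1}H^{k-2j}(V)$ as Hodge structures, i.e.\ $h^{p,q}(W')=h^{p,q}(W)+\sum_{j=1}^{c-1}h^{p-j,q-j}(V)$, which after multiplying by $u^pv^q$ and summing is exactly $e(W')=e(W)+e(V)\bigl(uv+\cdots+(uv)^{c-1}\bigr)=e(W)+e(V)\bigl(e(\mathbb{P}^{c-1})-1\bigr)$.

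There is no serious obstacle here; the proof is a bookkeeping exercise once the two structural inputs are in place. The only point deserving a sentence of care is precisely the Hodge-theoretic compatibility: one must record that the Leray--Hirsch isomorphism and the blow-up decomposition are isomorphisms of (mixed) Hodge structures with the indicated Tate twists, which is what makes the shift $h^{p,q}\mapsto h^{p-j,q-j}$ — rather than merely a shift in total degree — legitimate. I would state this explicitly and reference the standard sources, and also note at the outset that $V$ is taken to be smooth, since both the description of $E$ as a projective bundle and the blow-up cohomology formula rely on it.
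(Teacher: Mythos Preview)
Your proof is correct and considerably more detailed than the paper's, which simply cites \cite[Introduction~0.1]{Che96} and states that the formulas follow by ``simple computations''. You are spelling out precisely those computations: the Leray--Hirsch / projective-bundle formula and the blow-up cohomology formula, together with the observation that the extra classes are algebraic (hence of type $(j,j)$) so that the decompositions respect Hodge structures. This is the same underlying mathematics as in the cited reference, so there is no genuine divergence in approach---you have simply made the argument self-contained rather than deferring it. Your remark that $V$ must be smooth is well placed; the paper implicitly assumes this throughout (all centres in the applications are smooth).
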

	\begin{proof}
		It follows from \cite[Introduction 0.1]{Che96} and simple computations. 
	\end{proof}
	\begin{corollary}
		Let $Y=\mathbb{P}(\mathcal{O}\oplus\mathcal{O}(D))$ a $\mathbb{P}^1$-bundle over a Fano threefold $Z$ as in Section \ref{SettingA}. Then
		$h^{0,1}(Y)=h^{0,2}(Y)=h^{0,3}(Y)=0$, $h^{1,2}(Y)=h^{1,2}(Z)$, $h^{2,2}(Y)=2h^{1,1}(Z)$ and $h^{1,3}(Y)=0$.
		
		Let $X\to Y$ be the blow up of a codimension 2 subvariety $A_Y\subset Y$ as in Section \ref{SettingA}. Then 
		$h^{0,1}(X)=h^{0,2}(X)=h^{0,3}(X)=0$,
		$h^{1,2}(X)=h^{1,2}(Z)+h^{0,1}(A)$,
		$h^{2,2}(X)=2h^{1,1}(Z)+h^{1,1}(A)$, and
		$h^{1,3}(X)=h^{0,2}(A)$.
	\end{corollary}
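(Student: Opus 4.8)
The plan is to read off every Hodge number from the two identities in Lemma \ref{HodgeLemma}, applied to the precise $W$, $\mathcal{E}$, $V$ appearing in Construction A; the only extra inputs are some standard vanishing and symmetry facts for the Fano threefold $Z$, and the identification of the blown-up locus. Since the entire argument is coefficient-chasing in the Hodge polynomial $e(\cdot)(u,v)$, I do not expect a genuine obstacle — the work is organizing the bookkeeping.

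First I would compute $e(Y)$. As $Y=\mathbb{P}(\mathcal{O}\oplus\mathcal{O}(D))$ is a $\mathbb{P}^1$-bundle over $Z$, the first formula of Lemma \ref{HodgeLemma} gives $e(Y)=e(Z)\cdot e(\mathbb{P}^1)=e(Z)(1+uv)$, so comparing coefficients of $u^pv^q$ yields $h^{p,q}(Y)=h^{p,q}(Z)+h^{p-1,q-1}(Z)$. Now I would use that $Z$ is a smooth Fano threefold: by Kodaira vanishing $H^i(Z,\mathcal{O}_Z)=0$ for $i>0$, hence $h^{0,i}(Z)=h^{i,0}(Z)=0$ for $i=1,2,3$; and by Serre duality together with Hodge symmetry on the threefold $Z$ one has $h^{2,2}(Z)=h^{1,1}(Z)$ and $h^{1,3}(Z)=h^{2,0}(Z)=h^{0,2}(Z)=0$. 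Substituting these into the displayed relation gives at once $h^{0,1}(Y)=h^{0,2}(Y)=h^{0,3}(Y)=0$, then $h^{1,2}(Y)=h^{1,2}(Z)+h^{0,1}(Z)=h^{1,2}(Z)$, $h^{2,2}(Y)=h^{2,2}(Z)+h^{1,1}(Z)=2h^{1,1}(Z)$, and $h^{1,3}(Y)=h^{1,3}(Z)+h^{0,2}(Z)=0$.

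Next I would handle $X$. Recall from Section \ref{SettingA} that $X\to Y$ is the blow-up along $A_Y=\pi^{-1}(A)\cap\hat{G}_Y$; since $\hat{G}_Y$ is a section of $\pi$, the isomorphism $\hat{G}_Y\simeq Z$ identifies $A_Y$ with $A$, so $A_Y\simeq A$ is a smooth surface of codimension $c=2$ in $Y$. The second formula of Lemma \ref{HodgeLemma} then gives $e(X)=e(Y)+e(A_Y)\cdot\bigl(e(\mathbb{P}^{c-1})-1\bigr)=e(Y)+e(A)\cdot uv$, whence $h^{p,q}(X)=h^{p,q}(Y)+h^{p-1,q-1}(A)$. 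Plugging in the values of $h^{p,q}(Y)$ just found, and using $h^{p,q}(A)=0$ whenever $p<0$ or $q<0$, I obtain $h^{0,1}(X)=h^{0,2}(X)=h^{0,3}(X)=0$, $h^{1,2}(X)=h^{1,2}(Z)+h^{0,1}(A)$, $h^{2,2}(X)=2h^{1,1}(Z)+h^{1,1}(A)$, and $h^{1,3}(X)=0+h^{0,2}(A)=h^{0,2}(A)$, which is exactly the assertion.

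The only points that would need a sentence of justification rather than pure computation are: (a) the vanishing $h^{i,0}(Z)=0$ and the relations $h^{2,2}(Z)=h^{1,1}(Z)$, $h^{1,3}(Z)=0$, all of which follow from $Z$ being a smooth Fano threefold via Kodaira vanishing and Serre/Poincaré duality; and (b) the identification $A_Y\simeq A$, which is immediate from the definition of $A_Y$. Everything else is matching coefficients in $(1+uv)$-type products, so no real difficulty arises.
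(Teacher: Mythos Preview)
Your proposal is correct and follows exactly the same approach as the paper: apply Lemma \ref{HodgeLemma} first to the $\mathbb{P}^1$-bundle $Y\to Z$ and then to the blow-up $X\to Y$, using Kodaira vanishing on the Fano threefold $Z$ as the only additional input. The paper's proof is a one-line summary of precisely the coefficient-chasing you have written out in detail.
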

	\begin{proof}
		After noticing that, by Kodaira vanishing, $h^{0,i}(Z)=0$ for all $i>0$, it is enough to apply Lemma \ref{HodgeLemma} to the $\mathbb{P}^1$-bundle $Y\to Z$ and then to the blow up $X\to Y$. 
	\end{proof}
	Therefore, in order to be able to compute the Hodge numbers of $X$, we need to compute the Hodge numbers of $A$. The following Lemma provides some tools in that direction.
	\begin{lemma}\label{LemmaA}
		Let $A$ be a smooth irreducible surface in a Fano 3-fold $Z$. Then $h^{0,1}(A)=h^1(\mathcal{O}_Z(K_Z+A))$, and $h^{0,2}(A)=h^0(\mathcal{O}_Z(K_Z+A))$.
		
		Moreover 
		$h^{1,1}(A)=\chi_{top}(A)+4h^{0,1}(A)-2h^{0,2}(A)-2
		$, where $\chi_{top}(A):=\sum_{i}(-1)^ib_i(A)$ is the topological characteristic of $A$. 
	\end{lemma}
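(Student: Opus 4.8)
The plan is to deduce the first two identities from the adjunction formula and the long exact cohomology sequence of the ideal sheaf of $A$ in $Z$, and then to obtain the formula for $h^{1,1}(A)$ by combining the topological Euler characteristic with the Hodge decomposition of $A$. For the statements on $h^{0,1}(A)$ and $h^{0,2}(A)$: by the adjunction formula $\omega_A\simeq\mathcal{O}_Z(K_Z+A)|_A$, so $h^{0,2}(A)=h^0(A,\omega_A)=h^0(A,\mathcal{O}_Z(K_Z+A)|_A)$ and $h^{0,1}(A)=h^1(A,\mathcal{O}_A)=h^1(A,\omega_A)$ by Serre duality on the surface $A$. I would then restrict the line bundle $\mathcal{L}:=\mathcal{O}_Z(K_Z+A)$ along the exact sequence $0\to\mathcal{O}_Z(K_Z)\to\mathcal{L}\to\mathcal{L}|_A\to 0$ (twisting $0\to\mathcal{O}_Z(-A)\to\mathcal{O}_Z\to\mathcal{O}_A\to 0$ by $\mathcal{L}$). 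Since $Z$ is a Fano threefold, Kodaira vanishing gives $H^i(Z,\mathcal{O}_Z(K_Z))=H^i(Z,K_Z)=0$ for $i<3$, so the connecting maps yield $H^0(A,\mathcal{L}|_A)\simeq H^0(Z,\mathcal{L})$ and $H^1(A,\mathcal{L}|_A)\simeq H^1(Z,\mathcal{L})$. This immediately gives $h^{0,2}(A)=h^0(\mathcal{O}_Z(K_Z+A))$ and $h^{0,1}(A)=h^1(\mathcal{O}_Z(K_Z+A))$.

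For the last identity, I would use that $A$ is a smooth projective surface, so its Betti numbers are $b_0=b_4=1$, $b_1=b_3=2h^{0,1}(A)$, and $b_2=h^{2,0}(A)+h^{1,1}(A)+h^{0,2}(A)=2h^{0,2}(A)+h^{1,1}(A)$. Then
\[
\chi_{top}(A)=b_0-b_1+b_2-b_3+b_4=2-4h^{0,1}(A)+2h^{0,2}(A)+h^{1,1}(A),
\]
and solving for $h^{1,1}(A)$ gives exactly $h^{1,1}(A)=\chi_{top}(A)+4h^{0,1}(A)-2h^{0,2}(A)-2$, as claimed.

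I do not expect a serious obstacle here; the only points requiring care are invoking Kodaira vanishing in the correct range on the Fano threefold $Z$ (which forces the isomorphisms on $H^0$ and $H^1$ of the restriction sequence) and correctly bookkeeping the Hodge symmetries on the surface $A$ (using $h^{p,q}=h^{q,p}$ and $h^{2,0}(A)=h^{0,2}(A)$). In practice one would then combine this lemma with the preceding corollary, computing $\chi_{top}(A)$ via $c_2(A)$ from the normal bundle sequence $0\to T_A\to T_Z|_A\to \mathcal{O}_Z(A)|_A\to 0$, but that computation is downstream of the present statement and not needed for its proof.
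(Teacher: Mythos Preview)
Your proof is correct and essentially the same as the paper's: both use the ideal-sheaf sequence of $A$ in $Z$, Kodaira vanishing on the Fano threefold, and Serre duality, followed by the identical Hodge/Betti bookkeeping for $h^{1,1}(A)$. The only cosmetic difference is that you twist the sequence by $\mathcal{O}_Z(K_Z+A)$ and invoke Serre duality on $A$, whereas the paper leaves the sequence untwisted and applies Serre duality on $Z$; these are dual formulations of the same computation.
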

	\begin{proof}
		Consider the short exact sequence 
		\begin{align}
			0\to \mathcal{O}_Z(-A)\to \mathcal{O}_Z\to\mathcal{O}_A\to 0.
		\end{align}
		Since $Z$ is Fano, by Kodaira vanishing we have $H^i(Z,\mathcal{O}_Z)=0$ for all $i>0$, so that the induced long exact secquence in cohomology yields $H^1(\mathcal{O}_A)\simeq H^2(\mathcal{O}_Z(-A))$ and $H^2(\mathcal{O}_A)\simeq H^3(\mathcal{O}_Z(-A))$. By Serre duality, \sloppy${H^i(Z,\mathcal{O}_Z(-A))\simeq H^{3-i}(Z,\mathcal{O}_Z(K_Z+A))^*}$. 
		
		Finally, using the fact that $b_i(A)=b_{4-i}(A)$,  $h^{p,q}(A)=h^{q,p}(A)$, and $b_i(A)=\sum_k h^{k,i-k}(A)$, we get
		\begin{align*}
			\chi_{top}(A)=2b_0(A)-2b_1(A)+b_2(A)= 2-4 h^{0,1}(A)+2h^{0,2}(A)+h^{1,1}(A).  
		\end{align*}
		
	\end{proof}
	\begin{corollary}
		Let $A$ be a smooth irreducible surface in a Fano threefold $Z$. 
		
		If $K_Z+A$ is not effective, then 
		\begin{align*}
			h^{0,2}(A)=0\;\;\;\;\;\;
			h^{0,1}(A)=1-\frac{1}{12}(K_A^2+\chi_{top}(A))\;\;\;\;\;\;
			h^{1,1}(A)=\chi_{top}(A)+4h^{0,1}(A)-2
		\end{align*}
		
		If $A\in |-K_Z|$, then
		\begin{align*}
			h^{0,2}(A)=1\;\;\;\;\;\;
			h^{0,1}(A)=0\;\;\;\;\;\;
			h^{1,1}(A)=\chi_{top}(A)-4
		\end{align*}
	\end{corollary}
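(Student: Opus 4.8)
The plan is to derive both cases directly from Lemma \ref{LemmaA}, supplemented only by the Riemann--Roch theorem for surfaces and Kodaira vanishing on $Z$. First I would treat the case where $K_Z+A$ is not effective. Then $h^0(\mathcal{O}_Z(K_Z+A))=0$, so Lemma \ref{LemmaA} immediately gives $h^{0,2}(A)=0$. To pin down $h^{0,1}(A)$ I would compute the holomorphic Euler characteristic of $A$ in two ways: on one hand $\chi(\mathcal{O}_A)=1-h^{0,1}(A)+h^{0,2}(A)=1-h^{0,1}(A)$, and on the other Noether's formula gives $\chi(\mathcal{O}_A)=\tfrac{1}{12}\bigl(K_A^2+c_2(A)\bigr)=\tfrac{1}{12}\bigl(K_A^2+\chi_{top}(A)\bigr)$. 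Equating the two and solving yields $h^{0,1}(A)=1-\tfrac{1}{12}\bigl(K_A^2+\chi_{top}(A)\bigr)$. Substituting $h^{0,2}(A)=0$ into the last identity of Lemma \ref{LemmaA} then gives $h^{1,1}(A)=\chi_{top}(A)+4h^{0,1}(A)-2$.

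For the case $A\in|-K_Z|$ I would note that $K_Z+A\sim\mathcal{O}_Z$, so $h^0(\mathcal{O}_Z(K_Z+A))=h^0(\mathcal{O}_Z)=1$ and $h^1(\mathcal{O}_Z(K_Z+A))=h^1(\mathcal{O}_Z)=0$, the latter by Kodaira vanishing since $Z$ is Fano. Lemma \ref{LemmaA} then gives $h^{0,2}(A)=1$ and $h^{0,1}(A)=0$, and substituting these into the formula for $h^{1,1}(A)$ yields $h^{1,1}(A)=\chi_{top}(A)-2\cdot 1-2=\chi_{top}(A)-4$.

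There is no genuine obstacle here: everything reduces to bookkeeping with Lemma \ref{LemmaA}, Kodaira vanishing (already available because $Z$ is Fano), and Noether's formula. The only point worth recording is the consistency check in the second case: by adjunction $K_A=(K_Z+A)|_A=\mathcal{O}_A$, so $A$ is a K3 surface, and indeed $\chi_{top}(A)=24$ makes the formula return the expected value $h^{1,1}(A)=20$.
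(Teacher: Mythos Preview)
Your proof is correct and follows essentially the same approach as the paper: both cases are deduced directly from Lemma~\ref{LemmaA}, with Noether's formula supplying $h^{0,1}(A)$ in the first case and Kodaira vanishing on $Z$ giving $h^1(\mathcal{O}_Z)=0$ in the second. The K3 consistency check you add is a nice sanity check but not needed for the argument.
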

	\begin{proof}
		It is enough to apply Lemma \ref{LemmaA} and notice that if $K_Z+A$ is not effective, then $h^0(K_Z+A)=0$. For computing $h^{0,1}(A)$, observe that, by the Noether formula, $\chi(\mathcal{O}_A)=\frac{1}{12}(K_A^2+\chi_{top}(A))$. After recalling that $\chi(\mathcal{O}_A)=h^{0,0}(A)-h^{0,1}(A)+h^{0,2}(A)$, we get the formula for $h^{0,1}$. 
		
		On the other hand, if $A\in|-K_Z|$, then (again by Lemma \ref{LemmaA}), we get $h^{0,1}(A)=h^{1}(Z,\mathcal{O}_Z)=0$ because $Z$ is Fano, and $h^{0,2}(A)=h^0(Z,\mathcal{O}_Z)=1$. 
	\end{proof}
	
	The following facts provide us all necessary tools for computing the invariants $c_1(X)^4$, $c_1(X)^2c_2(X)$ and $\chi(-K_X)$. Recall that, if $X$ is Fano, by Kodaira vanishing we have $\chi(-K_X)=h^0(-K_X)$. 
	
	\begin{proposition}[{{\cite{CR22}, Lemma 3.2}}]
		Let \(W\) be a smooth projective variety, \(\dim W = 4\), and let \(\alpha : \tilde{W} \to W\) be the blow-up of \(W\) along a smooth irreducible surface \(V\). Then
		\begin{itemize}
			\item[(i)] \(K^4_{\tilde{W}} = K^4_W - 3(K_W|_V)^2 - 2K_V \cdot K_W|_V + c_2(\mathcal{N}_{V/W}) - K_V^2\);
			\item[(ii)] \(K^2_{\tilde{W}} \cdot c_2(\tilde{W}) = K^2_W \cdot c_2(W) - 12 \chi(O_V) + 2K_V^2 - 2K_V \cdot K_W|_V - 2c_2(\mathcal{N}_{V/W})\);
			\item[(iii)] \(\chi(O_{\tilde{W}}(-K_{\tilde{W}})) = \chi(O_W(-K_W)) - \chi(O_V) - \frac{1}{2}((K_W|_V)^2 + K_V \cdot K_W|_V)\).
		\end{itemize}
	\end{proposition}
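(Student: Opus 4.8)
The statement is \cite[Lemma 3.2]{CR22}, so one could simply quote it; for a self-contained argument the plan is the standard Chern-class computation on a blow-up along a smooth centre. Write $E$ for the exceptional divisor of $\alpha$, $j\colon E\hookrightarrow\tilde W$ for the inclusion and $p\colon E\to V$ for the induced $\mathbb{P}^{1}$-bundle, so that $E=\mathbb{P}(\mathcal{N}_{V/W})$; put $\zeta:=c_{1}(\mathcal{O}_{E}(1))$ with the normalisation $\mathcal{O}_{\tilde W}(E)|_{E}\simeq\mathcal{O}_{E}(-1)$, whence $j^{*}E=-\zeta$. The three ingredients are: the discrepancy formula $K_{\tilde W}\sim\alpha^{*}K_{W}+E$; the projection formula together with $\alpha_{*}[E]=0$ and $E^{k}=j_{*}\bigl((-\zeta)^{k-1}\bigr)$; and, on the rank-$2$ bundle $\mathcal{N}:=\mathcal{N}_{V/W}$, the Segre identities $p_{*}\zeta=1$, $p_{*}\zeta^{2}=-c_{1}(\mathcal{N})$, $p_{*}\zeta^{3}=c_{1}(\mathcal{N})^{2}-c_{2}(\mathcal{N})$. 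Adjunction in codimension $2$, $c_{1}(\mathcal{N})=K_{V}-(K_{W}|_{V})$, is used throughout to trade $c_{1}(\mathcal{N})$ for $K_{V}$ and $K_{W}|_{V}$.

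For (i), expand $K_{\tilde W}^{4}=\sum_{k=0}^{4}\binom{4}{k}(\alpha^{*}K_{W})^{4-k}E^{k}$ and push each summand down to $W$. The $k=0$ and $k=1$ terms contribute $K_{W}^{4}$ and $0$; the $k=2,3,4$ terms become $-6(K_{W}|_{V})^{2}$, $-4(K_{W}|_{V})\cdot c_{1}(\mathcal{N})$ and $-c_{1}(\mathcal{N})^{2}+c_{2}(\mathcal{N})$ respectively, via the identities above. Substituting $c_{1}(\mathcal{N})=K_{V}-K_{W}|_{V}$ and collecting the coefficients of $(K_{W}|_{V})^{2}$, $K_{V}\cdot K_{W}|_{V}$, $K_{V}^{2}$ and $c_{2}(\mathcal{N})$ gives precisely the asserted formula.

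Part (iii) is the quickest. Tensoring $0\to\mathcal{O}_{\tilde W}(-E)\to\mathcal{O}_{\tilde W}\to\mathcal{O}_{E}\to0$ by $\alpha^{*}\mathcal{O}_{W}(-K_{W})$ and taking Euler characteristics yields
\[
\chi(\tilde W,-K_{\tilde W})=\chi\bigl(\tilde W,\alpha^{*}(-K_{W})\bigr)-\chi\bigl(E,p^{*}(-K_{W}|_{V})\bigr).
\]
Since $R^{q}\alpha_{*}\mathcal{O}_{\tilde W}=0$ for $q>0$, the first term equals $\chi(W,-K_{W})$; since $p$ is a $\mathbb{P}^{1}$-bundle, $p_{*}\mathcal{O}_{E}=\mathcal{O}_{V}$ and $R^{1}p_{*}\mathcal{O}_{E}=0$, so the second term equals $\chi(V,-K_{W}|_{V})$, which Riemann--Roch on the surface $V$ evaluates as $\chi(\mathcal{O}_{V})+\tfrac12\bigl((K_{W}|_{V})^{2}+K_{V}\cdot K_{W}|_{V}\bigr)$. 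This is exactly (iii).

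The main obstacle is (ii), since it involves $c_{2}(T_{\tilde W})$ and not just $c_{1}$. The plan is to use the blow-up formula for the second Chern class of the tangent bundle along a smooth codimension-$2$ centre, of the shape $c_{2}(T_{\tilde W})=\alpha^{*}c_{2}(T_{W})+j_{*}\gamma$ for an explicit class $\gamma\in H^{2}(E)$ (one can derive it, if needed, from the tangent sequences of $\tilde W$ and of the projective bundle $E\to V$ together with the Euler sequence on $E$), then intersect with $K_{\tilde W}^{2}=(\alpha^{*}K_{W})^{2}+2\,\alpha^{*}K_{W}\cdot E+E^{2}$ and push down exactly as in (i). The simplification of the resulting expression uses $c(T_{W}|_{V})=c(T_{V})\,c(\mathcal{N})$, $c_{1}(T_{V})=-K_{V}$, $c_{2}(T_{V})=\chi_{top}(V)$, Noether's formula $\chi(\mathcal{O}_{V})=\tfrac1{12}(K_{V}^{2}+\chi_{top}(V))$, and adjunction, in order to reabsorb the $c_{2}(T_{W})|_{V}$ and $\chi_{top}(V)$ contributions. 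The delicate points are the sign conventions and the precise shape of $\gamma$; the final identity should be cross-checked against $\chi(\mathcal{O}_{\tilde W})=\chi(\mathcal{O}_{W})$ and against $\chi_{top}(\tilde W)=\chi_{top}(W)+\chi_{top}(V)$, the latter forced by Lemma \ref{HodgeLemma}, through the Hirzebruch--Riemann--Roch expansion in dimension $4$.
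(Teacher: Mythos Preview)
The paper does not give its own proof of this proposition: it is stated as a citation of \cite[Lemma 3.2]{CR22} and used as a black box for the invariant computations in Section~12. So there is no ``paper's proof'' to compare against, and your opening remark that one could simply quote the reference is exactly what the paper does.

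Your self-contained arguments for (i) and (iii) are correct and standard. The expansion of $K_{\tilde W}^4=(\alpha^*K_W+E)^4$ via the Segre identities $p_*\zeta=1$, $p_*\zeta^2=-c_1(\mathcal{N})$, $p_*\zeta^3=c_1(\mathcal{N})^2-c_2(\mathcal{N})$ and the substitution $c_1(\mathcal{N})=K_V-K_W|_V$ checks out line by line. The short exact sequence argument for (iii) is clean and correct.

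For (ii), however, you give only a plan and not a proof: you invoke a formula $c_2(T_{\tilde W})=\alpha^*c_2(T_W)+j_*\gamma$ ``for an explicit class $\gamma$'' without writing $\gamma$ down, and you do not carry out the intersection with $K_{\tilde W}^2$ or the simplification. The explicit formula one needs (see e.g.\ Fulton, \emph{Intersection Theory}, Thm.~15.4) is $c_2(T_{\tilde W})=\alpha^*c_2(T_W)-j_*\bigl(p^*c_1(\mathcal{N})+\zeta\bigr)$ in codimension~2, after which the computation is mechanical but non-trivial; the Noether formula is indeed the right way to convert the resulting $\chi_{top}(V)$ term into $12\chi(\mathcal{O}_V)-K_V^2$. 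If you want a genuinely self-contained proof you should complete this step; otherwise, simply citing \cite{CR22} as the paper does is adequate.
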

	\begin{proposition}[{{\cite{Sec23}, Proposition 3.4}}]
		Let \(W\) be a smooth projective variety of dimension 3, and $\mathcal{E}$ a rank 2 vector bundle on \(W\). Let \(\beta : \mathbb{P}(\mathcal{E}) \to W\) be a \(\mathbb{P}^1\)-bundle over \(W\). Then
		\begin{itemize}
			\item[(i)] \(K^4_{\mathbb{P}(\mathcal{E})} = -8K_W \cdot c_1(\mathcal{E})^2 + 32K_W \cdot c_2(\mathcal{E}) - 8K_W^3\);
			\item[(ii)] \(K^2_{\mathbb{P}(\mathcal{E})} \cdot c_2(\mathbb{P}(\mathcal{E})) = -2K_W \cdot c_1(\mathcal{E})^2 + 8K_W \cdot c_2(\mathcal{E}) - 2K_W^3 - 4K_W \cdot c_2(W)\);
			\item[(iii)] \(\chi(O_{\mathbb{P}(\mathcal{E})}(-K_{\mathbb{P}(\mathcal{E})})) = \chi(O_{\mathbb{P}(\mathcal{E})}) + 6K_W \cdot c_2(\mathcal{E}) - \frac{1}{2}(3K_W^3 + 3K_W \cdot c_1(\mathcal{E})^2) - \frac{1}{3} K_W \cdot c_2(W)\).
		\end{itemize}
	\end{proposition}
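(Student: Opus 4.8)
The plan is to reduce both intersection numbers and the Euler characteristic to computations on $W$ using the standard structure of the cohomology (or Chow) ring of a projective bundle. Write $P:=\mathbb{P}(\mathcal{E})$, let $\beta\colon P\to W$ be the projection and $\xi:=c_1(\mathcal{O}_P(1))$ the tautological class. Then $A^*(P)$ is a free $\beta^*A^*(W)$-module on $1,\xi$ subject to the Grothendieck relation $\xi^2=\beta^*c_1(\mathcal{E})\cdot\xi-\beta^*c_2(\mathcal{E})$, and the pushforward $\beta_*$ is determined by $\beta_*1=0$ and $\beta_*\xi=1$; together with the Grothendieck relation this gives $\beta_*\xi^2=c_1(\mathcal{E})$, $\beta_*\xi^3=c_1(\mathcal{E})^2-c_2(\mathcal{E})$, and $\beta_*\xi^4=c_1(\mathcal{E})^3-2c_1(\mathcal{E})c_2(\mathcal{E})$. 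Since $\dim W=3$, every class on $W$ of codimension $\geq 4$ vanishes, so once a degree-$4$ class on $P$ is pushed down to $W$ only finitely many terms survive.

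First I would compute $K_P$ and the Chern classes of $P$. From the rank-$2$ relative Euler sequence one gets $c_1(T_{P/W})=2\xi-\beta^*c_1(\mathcal{E})$ and, since $T_{P/W}$ is a line bundle, $c_2(T_{P/W})=0$; hence $-K_P=c_1(T_{P/W})+\beta^*c_1(T_W)=2\xi-\beta^*\bigl(K_W+c_1(\mathcal{E})\bigr)$, and multiplying total Chern classes, $c_2(P)=c_1(T_{P/W})\cdot\beta^*(-K_W)+\beta^*c_2(W)$ and $c_3(P)=c_1(T_{P/W})\cdot\beta^*c_2(W)+\beta^*c_3(W)$.

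With these in hand, parts (i) and (ii) are pure bookkeeping: substitute $K_P=-2\xi+\beta^*(K_W+c_1(\mathcal{E}))$ into $K_P^4$ and into $K_P^2\cdot c_2(P)$, expand by the binomial theorem, repeatedly replace $\xi^2$ by $\beta^*c_1(\mathcal{E})\,\xi-\beta^*c_2(\mathcal{E})$ so that only $\xi^0$ and $\xi$ remain, and apply $\beta_*$ together with the projection formula; the output is a polynomial in the intersection numbers $K_W^3$, $K_W\cdot c_1(\mathcal{E})^2$, $K_W\cdot c_2(\mathcal{E})$, $K_W\cdot c_2(W)$, which collapses to the stated formulas. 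For part (iii) I would apply Hirzebruch--Riemann--Roch to $\mathcal{O}_P(-K_P)$ on the fourfold $P$: extract the degree-$4$ part of $\mathrm{ch}(-K_P)\cdot\mathrm{td}(P)$ (using $c_1(P)=-K_P$ together with the formulas for $c_2(P),c_3(P)$ above), push everything down to $W$ exactly as before, and identify the "constant term" with $\chi(\mathcal{O}_P)=\int_P\mathrm{td}(P)$, leaving the correction terms expressed through $K_W$, the $c_i(\mathcal{E})$ and $c_2(W)$. Alternatively one can first check $\chi(\mathcal{O}_P)=\chi(\mathcal{O}_W)$ and write $\chi(-K_P)-\chi(\mathcal{O}_P)=\int_P\bigl[\mathrm{ch}(-K_P)-1\bigr]\mathrm{td}(P)$ to shorten the expansion.

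The only genuine obstacle is bookkeeping and sign conventions: one must fix the $\mathbb{P}(\mathcal{E})$-convention (rank-one quotients versus subbundles, equivalently whether $\beta_*\xi^2=c_1(\mathcal{E})$ or $-c_1(\mathcal{E})$) consistently with the Euler sequence used --- here the same convention as in Section \ref{SettingA}, under which $c_1(\mathcal{O}_Z\oplus\mathcal{O}_Z(D))=D$ and $-K_Y=2\xi+\pi^*(-K_Z-D)$, in agreement with Remark \ref{Remark-K_X}. Beyond that, the computation underlying (iii) is long but entirely mechanical, and no further geometric input is required.
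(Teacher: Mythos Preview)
Your approach is correct and is the standard one: the Chow ring of $\mathbb{P}(\mathcal{E})$ is generated over $\beta^*A^*(W)$ by the tautological class $\xi$ subject to the Grothendieck relation, the canonical class is $K_P=-2\xi+\beta^*(K_W+c_1(\mathcal{E}))$, and pushing forward by $\beta_*$ together with the projection formula reduces everything to intersection numbers on $W$; for (iii) Hirzebruch--Riemann--Roch on the fourfold $P$ does the job. Note, however, that the paper does not give a proof of this proposition at all: it is quoted verbatim from \cite[Proposition~3.4]{Sec23} and used as a black box in Section~12 for computing invariants, so there is no ``paper's own proof'' to compare against. Your sketch is exactly the kind of computation one would expect \cite{Sec23} to carry out, and the only caveat you raise --- fixing the $\mathbb{P}(\mathcal{E})$ convention consistently --- is the only place where an error could creep in.
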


	\section*{Appendix A. Final tables}
	\addcontentsline{toc}{section}{Appendix A: Final Tables}
	In Tables \ref{FinalTableRho4} and \ref{FinalTableRho5}  we list every family of Fano 4-folds $X$ (respectively with $\rho=4$ and $\rho=5$) that arise from Construction A. They all have $\delta=2$.  Under the column ``$Z$'', we make explicit the Fano 3-fold $Z$ from which $X$ arises through Construction A, by using the classical notation. Under the column ``Description of $X$'', we refer to the detailed description of the construction of type A that yields $X$, for example ``\#7-23'' means that the description of $X$ can be found in Table 7, row number 23.

	\begin{small}	
		\begin{longtable}{|c||c|c|c|c|c|c|c|c|c|C{2.1cm}|C{2.1cm}|}
			\caption{List of all families of Fano 4-folds arising from Construction A with $\rho=4$}
			\label{FinalTableRho4}\\
			\hline
			& $\rho$ & $c_1^4$ & $c_1^2c_2$ & $h^0(-K_X)$ & $h^{1,2}$& $h^{1,3}$&$h^{2,2}$&Toric?&Product?& $Z$&Description of $X$\\
			\hline
			\endhead
			1&4&112&112&29&2&0&14&No&No&\#2-18&\#\ref{2-18}-2\\
			\hline
			2&4&142&124&35&1&0&14&No&No&\#2-25&\#\ref{2-25}-5\\
			\hline
			3&4&148&124&36&0&0&12&No&No&\#2-24&\#\ref{2-24}-2\\
			\hline
			4&4&152&128&37&2&0&10&No&No&\#2-18&\#\ref{2-18}-1\\
			\hline
			5&4&164&128&39&2&0&6&No&No&\#2-25&\#\ref{2-25}-2\\
			\hline
			6&4&172&136&41&0&0&14&No&No&\#2-29&\#\ref{2-29}-4\\
			\hline
			7&4&172&148&42&0&3&42&No&No&\#2-34&\#\ref{2-34}-37\\
			\hline
			8&4&176&128&41&2&0&6&No&No&\#2-25&\#\ref{2-25}-3\\
			\hline
			9&4&178&136&42&0&0&12&No&No&\#2-27&\#\ref{2-27}-3\\
			\hline
			10&4&180&144&43&0&1&24&No&No&\#2-32&\#\ref{2-32}-9\\
			\hline
			11&4&184&148&44&0&1&6&No&No&\#2-33&\#\ref{2-33}-10\\
			\hline
			12&4&190&148&45&0&0&21&No&No&\#2-34&\#\ref{2-34}-31\\
			\hline
			13&4&194&140&45&0&0&9&No&No&\#2-24&\#\ref{2-24}-1\\
			\hline
			14&4&199&142&46&1&0&9&No&No&\#2-25&\#\ref{2-25}-4\\
			\hline
			15&4&205&154&48&0&1&24&No&No&\#2-34&\#\ref{2-34}-36\\
			\hline
			16&4&208&148&48&0&0&12&No&No&\#2-31&\#\ref{2-31}-4\\
			\hline
			17&4&208&148&48&3&0&6&No&No&\#3-24&\#\ref{2-34}-20\\
			\hline
			18&4&208&148&48&3&0&6&No&(\#3-9)$\times\mathbb{P}^1$&\#3-24&\#\ref{2-34}-21\\
			\hline
			19&4&210&156&49&0&1&24&No&No&\#2-35&\#\ref{2-35}-17\\
			\hline
			20&4&216&144&49&2&0&6&No&No&\#2-25&\#\ref{2-25}-1\\
			\hline
			21&4&218&152&50&0&0&12&No&No&\#2-32&\#\ref{2-32}-8\\
			\hline
			22&4&221&146&54&0&0&8&No&No&\#2-27&\#\ref{2-27}-2\\
			\hline
			23&4&223&154&51&0&0&14&No&No&\#2-34&\#\ref{2-34}-30\\
			\hline
			24&4&230&152&52&0&0&10&No&No&\#2-30&\#\ref{2-30}-3\\
			\hline
			25&4&233&158&53&0&0&14&No&No&\#2-33&\#\ref{2-33}-9\\
			\hline
			26&4&238&160&54&0&0&14&No&No&\#2-34&\#\ref{2-34}-29\\
			\hline
			27&4&238&160&54&0&0&12&No&No&\#2-34&\#\ref{2-34}-35\\
			\hline
			28&4&241&154&50&0&0&6&No&No&\#2-27&\#\ref{2-27}-1\\
			\hline
			29&4&241&154&54&1&0&6&No&No&\#2-34&\#\ref{2-34}-19\\
			\hline
			30&4&246&156&55&0&0&8&No&No&\#2-29&\#\ref{2-29}-3\\
			\hline
			31&4&248&164&56&0&0&12&No&No&\#2-34&\#\ref{2-34}-34\\
			\hline
			32&4&248&164&56&0&0&12&No&No&\#2-35&\#\ref{2-35}-16\\
			\hline
			33&4&256&160&57&0&0&6&No&No&\#2-32&\#\ref{2-32}-5\\
			\hline
			34&4&256&160&57&0&0&8&No&No&\#2-32&\#\ref{2-32}-7\\
			\hline
			35&4&256&160&57&1&0&6&No&(\#3-14)$\times\mathbb{P}^1$&\#2-34&\#\ref{2-34}-18\\
			\hline
			36&4&265&166&59&0&0&11&No&No&\#2-35&\#\ref{2-35}-13\\
			\hline
			37&4&266&164&59&0&0&6&No&No&\#2-32&\#\ref{2-32}-4\\
			\hline
			38&4&266&164&59&0&0&9&No&No&\#2-34&\#\ref{2-34}-28\\
			\hline
			39&4&268&172&60&0&0&12&No&No&\#2-36&\#\ref{2-36}-5\\
			\hline
			40&4&271&166&60&1&0&6&No&No&\#2-34&\#\ref{2-34}-17\\
			\hline
			41&4&271&166&60&0&0&9&No&No&\#2-34&\#\ref{2-34}-27\\
			\hline
			42&4&281&170&62&0&0&6&No&No&\#2-34&\#\ref{2-34}-33\\
			\hline
			43&4&282&168&62&0&0&8&No&No&\#2-32&\#\ref{2-32}-6\\
			\hline
			44&4&282&168&62&0&0&8&No&No&\#2-33&\#\ref{2-33}-8\\
			\hline
			45&4&284&164&62&0&0&6&No&No&\#2-34&\#\ref{2-34}-16\\
			\hline
			46&4&286&172&63&0&0&6&No&No&\#2-35&\#\ref{2-35}-15\\
			\hline
			47&4&288&168&63&0&0&7&No&No&\#2-31&\#\ref{2-31}-3\\
			\hline
			48&4&296&176&65&0&0&6&No&No&\#2-35&\#\ref{2-35}-14\\
			\hline
			49&4&299&170&65&0&0&7&No&No&\#2-30&\#\ref{2-30}-2\\
			\hline
			50&4&301&178&66&0&0&6&No&No&\#2-34&\#\ref{2-34}-32\\
			\hline
			51&4&302&176&66&0&0&9&No&No&\#2-34&\#\ref{2-34}-26\\
			\hline
			52&4&303&174&66&0&0&7&No&No&\#2-35&\#\ref{2-35}-12\\
			\hline
			53&4&304&172&66&0&0&6&No&No&\#2-32&\#\ref{2-32}-3\\
			\hline
			54&4&304&172&66&0&0&6&No&(\#3-19)$\times\mathbb{P}^1$&\#2-34&\#\ref{2-34}-14\\
			\hline
			55&4&304&172&66&0&0&6&No&No&\#2-34&\#\ref{2-34}-15\\
			\hline
			56&4&308&176&67&0&0&8&No&No&\#2-33&\#\ref{2-33}-7\\
			\hline
			57&4&314&176&68&0&0&6&No&No&\#2-34&\#\ref{2-34}-25\\
			\hline
			58&4&316&172&68&0&0&6&No&No&\#2-30&\#\ref{2-30}-1\\
			\hline
			59&4&319&178&69&0&0&6&No&No&\#2-34&\#\ref{2-34}-24\\
			\hline
			60&4&320&176&69&0&0&6&No&No&\#2-32&\#\ref{2-32}-2\\
			\hline
			61&4&320&176&69&0&0&6&No&(\#3-22)$\times\mathbb{P}^1$&\#2-34&\#\ref{2-34}-13\\
			\hline
			62&4&320&176&69&0&0&6&No&No&\#2-35&\#\ref{2-35}-7\\
			\hline
			63&4&329&182&71&0&0&7&No&No&\#2-35&\#\ref{2-35}-11\\
			\hline
			64&4&331&178&71&0&0&6&No&No&\#2-31&\#\ref{2-31}-2\\
			\hline
			65&4&335&182&72&0&0&6&No&No&\#2-34&\#\ref{2-34}-23\\
			\hline
			66&4&337&178&72&0&0&6&$I_{15}$&No&\#2-34&\#\ref{2-34}-11\\
			\hline
			67&4&346&184&74&0&0&6&No&No&\#2-35&\#\ref{2-35}-6\\
			\hline
			68&4&347&196&84&0&0&6&$I_{12}$&No&\#2-34&\#\ref{2-34}-10\\
			\hline
			69&4&351&186&75&0&0&5&$H_{10}$&No&\#2-35&\#\ref{2-35}-10\\
			\hline
			70&4&356&188&76&1&0&6&No&No&\#2-29&\#\ref{2-29}-2\\
			\hline
			71&4&356&188&76&0&0&6&No&No&\#2-34&\#\ref{2-34}-12\\
			\hline
			72&4&357&186&76&0&0&6&$I_{14}$&No&\#2-33&\#\ref{2-33}-6\\
			\hline
			73&4&362&188&77&0&0&6&No&No&\#2-33&\#\ref{2-32}-1\\
			\hline
			74&4&367&190&78&0&0&5&$H_{9}$&No&\#2-34, \#2-35&\#\ref{2-34}-5, \#\ref{2-35}-9\\
			\hline
			75&4&368&188&78&0&0&6&No&No&\#2-31&\#\ref{2-31}-1\\
			\hline
			76&4&368&188&78&0&0&6&$I_{13}$&(\#3-26)$\times\mathbb{P}^1$&\#2-34&\#\ref{2-34}-9\\
			\hline
			77&4&378&192&80&0&0&5&$H_{8}$&$\mathbb{P}^2\times\Bl_2\mathbb{P}^2$&\#2-34&\#\ref{2-34}-4\\
			\hline
			78&4&382&196&81&0&0&5&$H_{7}$&No&\#2-34, \#2-36&\#\ref{2-34}-3, \#\ref{2-36}-4\\
			\hline
			79&4&382&196&81&0&0&6&No&No&\#2-34&\#\ref{2-34}-22\\
			\hline
			80&4&384&192&81&0&0&6&$I_{8}$&No&\#2-35&\#\ref{2-35}-4, \#\ref{2-35}-5\\
			\hline
			81&4&389&194&82&0&0&6&$I_{10}$&No&\#2-34&\#\ref{2-34}-8\\
			\hline
			82&4&390&192&62&0&0&6&$I_{9}$&No&\#2-33&\#\ref{2-33}-4\\
			\hline
			83&4&400&196&84&0&0&6&$I_{7}$&(\#3-29)$\times\mathbb{P}^1$&\#2-34&\#\ref{2-34}-7\\
			\hline
			84&4&409&202&86&0&0&5&$H_{6}$&No&\#2-35&\#\ref{2-35}-8\\
			\hline
			85&4&411&198&86&0&0&6&$I_{6}$&No&\#2-33&\#\ref{2-33}-3\\
			\hline
			86&4&414&204&87&1&0&6&No&No&\#2-29&\#\ref{2-29}-1\\
			\hline
			87&4&415&202&87&0&0&6&$I_{5}$&No&\#2-33&\#\ref{2-33}-5\\
			\hline
			88&4&415&202&87&0&0&5&$H_{5}$&No&\#2-34, \#2-35&\#\ref{2-34}-2, \#\ref{2-35}-2\\
			\hline
			89&4&433&202&90&0&0&6&$I_{4}$&No&\#2-33&\#\ref{2-33}-2\\
			\hline
			90&4&442&208&92&0&0&6&$I_{3}$&No&\#2-35&\#\ref{2-35}-3\\
			\hline
			91&4&447&210&93&0&0&5&$H_{4}$&No&\#2-35&\#\ref{2-35}-1\\
			\hline
			92&4&463&214&96&0&0&6&$I_{2}$&No&\#2-34&\#\ref{2-34}-6\\
			\hline
			93&4&478&220&99&0&0&5&$H_{3}$&No&\#2-34, \#2-36&\#\ref{2-34}-1, \#\ref{2-36}-3\\
			\hline
			94&4&496&220&102&0&0&6&$I_{1}$&No&\#2-33&\#\ref{2-33}-1\\
			\hline
			95&4&505&226&104&0&0&5&$H_{2}$&No&\#2-36&\#\ref{2-36}-2\\
			\hline
			96&4&558&240&114&0&0&5&$H_1$&No&\#2-36&\#\ref{2-36}-1\\
			\hline
		\end{longtable}
	\end{small}

	\begin{small}
		\begin{longtable}{|c||c|c|c|c|c|c|c|c|c|C{2.1cm}|C{2.1cm}|}
			\caption{List of all families of Fano 4-folds arising from Construction A with $\rho=5$}\label{FinalTableRho5}\\
			\hline
			& $\rho$ & $c_1^4$ & $c_1^2c_2$ & $h^0(-K_X)$ & $h^{1,2}$& $h^{1,3}$&$h^{2,2}$&Toric?&Product?& $Z$&Description of $X$\\
			\hline 
			\endhead
			1&5&180&144&43&0&1&26&No&No&\#3-27&\#\ref{3-27}-23\\
			\hline
			2&5&184&136&43&0&0&12&No&No&\#3-17&\#\ref{3-17}-2\\
			\hline
			3&5&200&140&46&0&0&12&No&No&\#3-19&\#\ref{DoubleTable}-3\\
			\hline
			4&5&202&148&47&0&0&16&No&No&\#3-28&\#\ref{3-28}-18\\
			\hline
			5&5&202&148&47&0&0&16&No&No&\#3-27&\#\ref{3-27}-23\\
			\hline
			6&5&214&148&49&0&0&12&No&No&\#3-25&\#\ref{3-25}-4\\
			\hline
			7&5&220&148&50&0&0&10&No&No&\#3-24&\#\ref{3-24}-2\\
			\hline
			8&5&224&152&51&1&0&8&No&No&\#3-27&\#\ref{3-27}-16\\
			\hline
			9&5&224&152&51&1&0&8&No&(\#4-2)$\times\mathbb{P}^1$&\#3-27&\#\ref{3-27}-17\\
			\hline
			10&5&224&152&51&0&0&12&No&No&\#3-27&\#\ref{3-27}-21\\
			\hline
			11&5&229&154&52&0&0&12&No&No&\#3-28&\#\ref{3-28}-17\\
			\hline
			12&5&234&156&53&0&0&12&No&No&\#3-27&\#\ref{3-27}-20\\
			\hline
			13&5&236&152&53&0&0&10&No&No&\#3-17&\#\ref{3-17}-1\\
			\hline
			14&5&244&160&55&0&0&12&No&No&\#3-31&\#\ref{3-31}-5\\
			\hline
			15&5&245&158&55&0&0&10&No&No&\#3-28&\#\ref{3-28}-14\\
			\hline
			16&5&246&156&55&0&0&8&No&No&\#3-27&\#\ref{3-27}-15\\
			\hline
			17&5&250&160&56&0&0&10&No&No&\#3-30&\#\ref{3-30}-4\\
			\hline
			18&5&252&156&56&0&0&10&No&No&\#3-26, \#3-19&\#\ref{DoubleTable}-2\\
			\hline
			19&5&256&160&57&0&0&8&No&No&\#3-28&\#\ref{3-28}-15\\
			\hline
			20&5&256&160&57&0&0&8&No&(\#4-4)$\times\mathbb{P}^1$&\#3-28&\#\ref{3-28}-16\\
			\hline
			21&5&256&160&57&0&0&8&No&(\#4-5)$\times\mathbb{P}^1$&\#3-27&\#\ref{3-27}-14\\
			\hline
			22&5&256&160&57&0&0&10&No&No&\#3-27&\#\ref{3-27}-19\\
			\hline
			23&5&266&164&59&0&0&8&No&No&\#3-27&\#\ref{3-27}-13\\
			\hline
			24&5&278&164&61&0&0&8&No&No&\#3-27&\#\ref{3-27}-12\\
			\hline
			25&5&278&164&61&0&0&9&No&No&\#3-24&\#\ref{3-24}-1\\
			\hline
			26&5&282&168&62&0&0&10&No&No&\#3-27&\#\ref{3-27}-18\\
			\hline
			27&5&283&166&62&0&0&9&No&No&\#3-25&\#\ref{3-25}-3\\
			\hline
			28&5&288&168&63&0&0&8&No&(\#4-7)$\times\mathbb{P}^1$&\#3-27&\#\ref{3-27}-10\\
			\hline
			29&5&288&168&63&0&0&8&No&No&\#3-27&\#\ref{3-27}-11\\
			\hline
			30&5&293&170&64&0&0&9&No&No&\#3-28&\#\ref{3-28}-13\\
			\hline
			31&5&299&170&71&0&0&8&$Q_{17}$&No&\#3-28&\#\ref{3-28}-12\\
			\hline
			32&5&304&172&66&0&0&8&No&(\#4-8)$\times\mathbb{P}^1$&\#3-27&\#\ref{3-27}-9\\
			\hline
			33&5&310&172&67&0&0&8&$Q_{16}$&No&\#3-27, \#3-25&\#\ref{3-27}-7, \#\ref{3-25}-2\\
			\hline
			34&5&310&172&67&0&0&9&$\text{Yes}^3$\tnote{a}&No&\#3-26&\#\ref{DoubleTable}-1\\
			\hline
			35&5&320&176&78&0&0&8&$Q_{15}$&(\#4-9)$\times\mathbb{P}^1$&\#3-28, \#3-27&\#\ref{3-28}-11, \#\ref{3-27}-5, \#\ref{3-27}-6\\
			\hline
			36&5&325&178&70&0&0&8&$Q_{14}$&No&\#3-30, \#3-28&\#\ref{3-30}-3, \#\ref{3-28}-3\\
			\hline
			37&5&330&180&71&0&0&8&$Q_{13}$&No&\#3-31, \#3-27&\#\ref{3-31}-4, \#\ref{3-27}-4\\
			\hline
			38&5&330&180&71&0&0&8&No&No&\#3-27&\#\ref{3-27}-8\\
			\hline
			39&5&331&178&71&0&0&8&$Q_{12}$&No&\#3-28, \#3-25&\#\ref{3-28}-9, \#\ref{3-25}-1\\
			\hline
			40&5&336&180&72&0&0&8&$Q_{10}$&$\mathbb{F}_1\times\Bl_2\mathbb{P}^2$&\#3-28&\#\ref{3-28}-2\\
			\hline
			41&5&336&180&72&0&0&8&$Q_{11}$&(\#4-10)$\times\mathbb{P}^1$&\#3-27&\#\ref{3-27}-3\\
			\hline
			42&5&341&182&73&0&0&8&$Q_{9}$&No&\#3-28&\#\ref{3-28}-8, \#\ref{3-28}-10\\
			\hline
			43&5&352&184&75&0&0&8&$Q_{8}$&(\#4-11)$\times\mathbb{P}^1$&\#3-28, \#3-27&\#\ref{3-28}-7, \#\ref{3-27}-2\\
			\hline
			44&5&363&186&77&0&0&8&$Q_{7}$&No&\#3-28&\#\ref{3-28}-6\\
			\hline
			45&5&368&188&78&0&0&8&$Q_{6}$&(\#4-12)$\times\mathbb{P}^1$&\#3-28&\#\ref{3-28}-5\\
			\hline
			46&5&373&190&79&0&0&8&$Q_{5}$&No&\#3-30, \#3-28&\#\ref{3-30}-2, \#\ref{3-28}-1\\
			\hline
			47&5&394&196&83&0&0&8&$Q_{3}$&No&\#3-31, \#3-27&\#\ref{3-31}-3, \#\ref{3-27}-1\\
			\hline
			48&5&405&198&85&0&0&8&$Q_{4}$&No&\#3-31, \#3-28&\#\ref{3-31}-2, \#\ref{3-28}-4\\
			\hline
			49&5&405&198&85&0&0&8&$Q_2$&No&\#3-30&\#\ref{3-30}-1\\
			\hline
			50&5&442&208&92&0&0&8&$Q_1$&No&\#3-31&\#\ref{3-31}-1\\
			\hline
		\end{longtable}
	\end{small}
	
	\begin{tablenotes}
		\footnotesize
		\item[a] $^{\text{3}}$number 108 in \cite{Bat99}, \S4.
	\end{tablenotes}

	\newcommand{\etalchar}[1]{$^{#1}$}
	\providecommand{\bysame}{\leavevmode\hbox to3em{\hrulefill}\thinspace}
	\providecommand{\MR}{\relax\ifhmode\unskip\space\fi MR }
	\providecommand{\MRhref}[2]{%
		\href{http://www.ams.org/mathscinet-getitem?mr=#1}{#2}
	}
	\providecommand{\href}[2]{#2}

	Università di Torino, Dipartimento di Matematica, via Carlo Alberto 10, 10123 Torino - Italy\\
	Email address: pierroberto.pastorino@unito.it

\begin{thebibliography}{CDGF{\etalchar{+}}23}
		
		\bibitem[ACC{\etalchar{+}}23]{Ara+23}
		Carolina Araujo, Ana-Maria Castravet, Ivan Cheltsov, et~al., \emph{The {C}alabi
			problem for {F}ano threefolds}, Cambridge University Press, 2023.
		
		\bibitem[Bat99]{Bat99}
		Victor~V. Batyrev, \emph{On the classification of toric fano 4-folds}, J. Math.
		Sci. (New York) \textbf{94} (1999), 1021--1050.
		
		\bibitem[BCW02]{BCW02}
		Laurent Bonavero, Frédéric Campana, and Jarosław~A. Wiśniewski,
		\emph{Variétés complexes dont l'éclatée en un point est de {F}ano}, C. R.
		Math. Acad. Sci. Paris \textbf{334} (2002), no.~6, 463--468.
		
		\bibitem[BFMT25]{BFMT25}
		Marcello Bernardara, Enrico Fatighenti, Laurent Manivel, and Fabio Tanturri,
		\emph{Fano fourfolds of {K}3 type}, arXiv:2111.13030 (2025).
		
		\bibitem[Cas08]{Cas08}
		Cinzia Casagrande, \emph{Quasi-elementary contractions of {F}ano manifolds},
		Compositio Mathematica \textbf{144} (2008), 1429--1460.
		
		\bibitem[Cas12]{Cas12}
		\bysame, \emph{On the {P}icard number of divisors in {F}ano manifolds}, Ann.
		Sci. Éc. Norm. Supér. \textbf{45} (2012), 363--403.
		
		\bibitem[Cas14]{Cas14}
		\bysame, \emph{On some {F}ano manifolds admitting a rational fibration}, J.
		Lond. Math. Soc. (2) \textbf{90} (2014), 1--28.
		
		\bibitem[Cas23]{Cas23}
		\bysame, \emph{The {L}efschetz defect of {F}ano varieties}, Rend. Circ. Mat.
		Palermo (2) \textbf{72} (2023), 3061--3075, Special issue on Fano varieties.
		
		\bibitem[CD15]{CD15}
		Cinzia Casagrande and Stéphane Druel, \emph{Locally unsplit families of large
			anticanonical degree on {F}ano manifolds}, Int. Math. Res. Not. (2015),
		10756--10800.
		
		\bibitem[CDGF{\etalchar{+}}23]{CDFKM23}
		Ivan Cheltsov, Tiago Duarte~Guerreiro, Kento Fujita, Igor Krylov, and Jesus
		Martinez-Garcia, \emph{K-stability of {C}asagrande--{D}ruel varieties}, J.
		Reine Angew. Math. (2023), to appear.
		
		\bibitem[Che96]{Che96}
		Jan Cheah, \emph{The {H}odge polynomial of the {F}ulton--{M}ac{P}herson
			compactification of configuration spaces}, Amer. J. Math. \textbf{118}
		(1996), 963--977.
		
		\bibitem[CKP15]{CKP15}
		Tom Coates, Alexander Kasprzyk, and Thomas Prince, \emph{{F}our-dimensional
			{F}ano toric complete intersections}, Proc. R. Soc. A \textbf{471} (2015),
		20140704.
		
		\bibitem[CR22]{CR22}
		Cinzia Casagrande and Eleonora~A. Romano, \emph{Classification of {F}ano
			4-folds with {L}efschetz defect 3 and {P}icard number 5}, J. Pure Appl.
		Algebra \textbf{226} (2022), Paper No. 106864, 13 pp.
		
		\bibitem[CRS22]{CRS22}
		Cinzia Casagrande, Eleonora~A. Romano, and Saverio Secci, \emph{{F}ano
			manifolds with {L}efschetz defect 3}, J. Math. Pures Appl. (9) \textbf{163}
		(2022), 625--653, Corrigendum: 168 (2022), 108--109.
		
		\bibitem[CS24]{CS24}
		Cinzia Casagrande and Saverio Secci, \emph{Classifying {F}ano 4-folds with a
			rational fibration onto a 3-fold}, arXiv:2408.10337 (2024).
		
		\bibitem[DN14]{Del14}
		Gloria Della~Noce, \emph{On the {P}icard number of singular {F}ano varieties},
		Int. Math. Res. Not. (2014), 955--990.
		
		\bibitem[DPU17]{DPU17}
		Olivia Dumitrescu, Elisa Postinghel, and Stefano Urbinati, \emph{Cone of
			effective divisors on the blown-up $\mathbb{P}^3$ in general lines}, Rend.
		Circ. Mat. Palermo (2) \textbf{66} (2017), no.~2, 205--216.
		
		\bibitem[FTT24]{FTT24}
		Enrico Fatighenti, Fabio Tanturri, and Federico Tufo, \emph{On the geometry of
			some quiver zero loci {F}ano fourfolds}, arXiv:2406.04389 (2024), With an
		appendix by E. Kalashnikov and F. Tufo.
		
		\bibitem[IP99]{IP99}
		Vasily Iskovskikh and Yuri Prokhorov, \emph{Algebraic geometry {V}: {F}ano
			varieties}, Encyclopaedia of Mathematical Sciences, vol.~47, Springer, 1999.
		
		\bibitem[Isk77]{Isk77}
		Vasily Iskovskikh, \emph{{F}ano 3-folds {I}}, Math. USSR Izv. \textbf{11}
		(1977), 485--527.
		
		\bibitem[Isk78]{Isk78}
		\bysame, \emph{{F}ano 3-folds {II}}, Math. USSR Izv. \textbf{12} (1978),
		469--506.
		
		\bibitem[Mal24]{Mal24} Daniel Mallory, \emph{On the K-stability of blow-ups of projective bundles}, arXiv:2412.11028, 2024.
		
		
		\bibitem[Mar73]{Mar73}
		Masaki Maruyama, \emph{On a family of algebraic vector bundles}, Number theory,
		algebraic geometry and commutative algebra, in honor of Yasuo Akizuki,
		Kinokuniya, Tokyo, 1973, pp.~95--146.
		
		\bibitem[Mar82]{Mar82}
		\bysame, \emph{Elementary transformations in the theory of algebraic vector
			bundles}, Algebraic Geometry (La Rábida, 1981), Lecture Notes in
		Mathematics, vol. 961, Springer-Verlag, Berlin, 1982, pp.~241--266.
		
		\bibitem[MM81]{MM81}
		Shigefumi Mori and Shigeru Mukai, \emph{Classification of {F}ano 3-folds with
			$b_2 \ge 2$}, Manuscr. Math. \textbf{36} (1981), 147--162.
		
		\bibitem[MM03]{MM03}
		\bysame, \emph{Erratum: ``{C}lassification of {F}ano 3-folds with $b_2 \ge
			2$''}, Manuscr. Math. \textbf{110} (2003), 407.
		
		\bibitem[Rom19]{Rom19}
		Eleonora~A. Romano, \emph{Non-elementary {F}ano conic bundles}, Collect. Math.
		\textbf{70} (2019), 33--50.
		
		\bibitem[Sec23]{Sec23}
		Saverio Secci, \emph{{F}ano fourfolds having a prime divisor of {P}icard number
			1}, Adv. Geom. \textbf{23} (2023), 267--280.
		
		\bibitem[SW90]{SW90}
		Michał Szurek and Jarosław~A. Wiśniewski, \emph{{F}ano bundles of rank 2 on
			surfaces}, Compositio Math. \textbf{76} (1990), no.~1--2, 295--305.
		
		\bibitem[Ver25]{Ver25}
		Matteo Verni, \emph{{L}efschetz defect in families}, arXiv:2501.10192 (2025).
		
		\bibitem[Wi\'s91]{Wis91}
		Jarosław~A. Wiśniewski, \emph{On contractions of extremal rays of {F}ano
			manifolds}, J. Reine Angew. Math. \textbf{417} (1991), 141--157.
		
	\end{thebibliography}
\end{document}